\theoremstyle{plain}
\newtheorem{teor}{Theorem}
\newtheorem{prop}[teor]{Proposition}
\newtheorem{lema}[teor]{Lemma}
\newtheorem{coro}[teor]{Corollary}
\newtheorem{teorintro}{Theorem}
\newtheorem{corointro}[teorintro]{Corollary}
\theoremstyle{definition}
\newtheorem{defi}[teor]{Definition}
\newtheorem{rem}[teor]{Remark}
\newtheorem*{rem*}{Remark}
\newcommand{\tcr}{\textcolor{black}}
\newcommand{\tcred}{\textcolor{black}}
\newcommand{\KN}{\mathbin{\bigcirc\mspace{-15mu}\wedge\mspace{3mu}}}
\newenvironment{demo}{\rm \trivlist \item[\hskip \labelsep{\it
      Proof}.]}{\nopagebreak \hfill $\square$ \endtrivlist}
\newcommand{\Ric}{\mbox{{\rm Ric}}}
\title{Stable maximal hypersurfaces in Lorentzian spacetimes}
\author[G. Colombo]{Giulio Colombo}
\address{Dipartimento di Matematica, Universit\`a degli Studi di Milano, 20133 Milano, Italy}
\curraddr{}
\email{giulio.colombo@unimi.it}
\thanks{}
\author[J. A. S. Pelegr\'in]{Jos\'e A. S. Pelegr\'in}
\address{Departamento de Geometr\'ia y Topolog\'ia, Universidad de Granada, 18071 Granada, Spain}
\curraddr{Departamento de Matem\'atica Aplicada y Estad\'istica, Universidad 
	CEU San Pablo, 28003 Madrid, Spain}
\email{jpelegrin@ugr.es, jose.sanchezpelegrin@ceu.es}
\thanks{}
\author[M. Rigoli]{Marco Rigoli}
\address{Dipartimento di Matematica, Universit\`a degli Studi di Milano, 20133 Milano, Italy}
\curraddr{}
\email{marco.rigoli@unimi.it}
\thanks{}
\date{}
\begin{document}

\maketitle

\thispagestyle{empty}

\begin{abstract}
We study the geometry of stable maximal hypersurfaces in a variety of spacetimes satisfying various physically relevant curvature assumptions, for instance the Timelike Convergence Condition (TCC). We characterize stability when the target space has constant sectional curvature as well as give sufficient conditions on the geometry of the ambient spacetime (e.g., the validity of TCC) to ensure stability. Some rigidity results and height estimates are also proven in GRW spacetimes. In the last part of the paper we consider $k$-stability of spacelike hypersurfaces, a concept related to mean curvatures of higher orders.
\end{abstract}
\vspace*{5mm}

\noindent \textbf{MSC 2010} \; {
	Primary: 53C24, 
	53C42, 
	35J20; 
	Secondary: 35P15, 
	53C50, 
	53C80, 
	83C99 
}

\noindent \textbf{Keywords} \; {
	Stable maximal hypersurface $\cdot$
	$k$-stability $\cdot$
	Spacetime of constant sectional curvature $\cdot$
	Generalized Robertson-Walker spacetime
}

\section{Introduction}

In the last decades, maximal hypersurfaces in spacetimes have attracted a great deal of mathematical and physical interest. The importance of this family of spacelike hypersurfaces in General Relativity is well-known and a summary of several reasons justifying this opinion can be found, for instance, in \cite{M-T}. Among them, we emphasize the key role they play in the study of the Cauchy problem \cite{CG, Li} as well as their importance in the proof of the positivity of the gravitational mass \cite{SY}. Furthermore, maximal hypersurfaces describe, in some relevant cases, the transition between the expanding and contracting phases of a relativistic universe. Finally, the existence of constant mean curvature (and in particular maximal) hypersurfaces is useful in the study of the structure of singularities in the space of solutions of the Einstein equations \cite{AMM}. At last, we should also mention their use in numerical relativity for integrating forward in time \cite{JKG}.

From a mathematical point of view, maximal hypersurfaces in a spacetime $\overline{M}$ are (locally) critical points for a natural variational problem, namely, that of the area functional (see, for instance, \cite{brasil}) and their study is helpful for understanding the structure of $\overline{M}$ \cite{Bar}. In particular, for some asymptotically flat spacetimes, maximal hypersurfaces produce a foliation of the spacetime, defining a time function \cite{BF}. Classical papers dealing with uniqueness of maximal hypersurfaces are, for instance, \cite{BF, Ch}, although a previous relevant result in this direction was the proof given by Cheng and Yau \cite{CY} of the Bernstein-Calabi conjecture \cite{Ca}: spacelike affine hyperplanes are the only complete maximal hypersurfaces in the the $(m+1)$-dimensional Lorentz-Minkowski spacetime. Nishikawa \cite{N} extended their result by proving that any complete maximal hypersurface immersed in a spacetime $\overline{M}$ is totally geodesic when $\overline{M}$ belongs to a family of locally symmetric Lorentzian manifolds that includes spacetimes of nonnegative constant curvature. Ishihara \cite{I} showed that this property is not shared by spacetimes of negative constant curvature by exhibiting an example of a complete maximal hypersurface with constant nonzero norm of the shape operator in the $(m+1)$-dimensional anti-de Sitter spacetime of curvature $-1$. In Theorem \ref{teoni2} below we prove a slight generalization of Nishikawa's result by proving an upper bound on the norm of the shape operator first obtained by Ishihara in the case of ambient spacetimes of constant curvature. More recently, new uniqueness results for maximal hypersurfaces have been found in a large variety of spacetimes by means of different techniques \cite{ARS, PRR1, PRR2}.

In this paper we will focus on a particular family of maximal hypersurfaces, namely, stable maximal hypersurfaces, that is, critical points of the volume functional for compactly supported normal variations with non-positive second variation. A mild condition on the curvature of the ambient spacetime is enough to ensure stability of maximal hypersurfaces.

\begin{teorintro}
	\label{intro:teostab}
	Let $\overline{M}$ be a spacetime with nonnegative Ricci curvature on timelike vectors. If $\psi: M \longrightarrow \overline{M}$ is a (not necessarily complete) \tcred{oriented} maximal hypersurface, then $\psi$ is stable. If $M$ is also compact, then $\psi$ is totally geodesic.
\end{teorintro}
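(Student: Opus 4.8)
The natural strategy is to read both conclusions off the second variation of the area functional. The plan is first to record the Jacobi form for a normal variation of a maximal hypersurface. Writing $N$ for the future-directed unit normal (so $\langle N,N\rangle=-1$) and $A$ for the shape operator, I would take a compactly supported variation with normal component $fN$ and differentiate the induced volume element twice. Differentiating once produces the mean curvature, which vanishes by maximality, so the surface is critical; differentiating again, and using the Gauss and Weingarten formulas together with the definition of the ambient curvature tensor to evaluate the normal–normal term, gives
$$ \mathcal{A}''(0)(f) = -\int_{M}\Big( |\nabla f|^2 + \big(\Ricb(N,N)+|A|^2\big) f^2 \Big)\, dM. $$
The one point requiring care is the sign coming from $\langle N,N\rangle=-1$: it is exactly this sign that makes the Ricci term \emph{assist} stability here, rather than obstruct it as in the Riemannian minimal case, so that the whole integrand enters with a single overall minus. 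The tangential acceleration $\overline{\nabla}_N N$ of the variation contributes only a divergence $\diver_M(f^2\overline{\nabla}_N N)$, which integrates to zero since $f$ is compactly supported, and hence does not appear above.

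Stability is then immediate. Under the hypothesis $\Ricb(N,N)\ge 0$ for the timelike vector $N$ (this is precisely the TCC applied to the normal), and since $|A|^2\ge 0$ always, the integrand is pointwise nonnegative; therefore $\mathcal{A}''(0)(f)\le 0$ for every compactly supported $f$, which is the definition of stability. This is the core of the first assertion, and, pleasingly, it requires no completeness.

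For the compact case the obvious move is to insert the admissible test function $f\equiv 1$ (legitimate since $M$ is compact and boundaryless), which yields $\int_M(\Ricb(N,N)+|A|^2)\,dM\ge 0$. I expect this to be where the real difficulty lies: under TCC both summands are already pointwise nonnegative, so this inequality is automatic and by itself cannot force $A\equiv 0$. Indeed totally geodesic compact maximal hypersurfaces with $\Ricb(N,N)>0$ do occur (for instance a geodesic slice in a suitable warped product), so the conclusion is genuinely $A\equiv 0$ and not $\Ricb(N,N)\equiv 0$. The main obstacle is thus to produce a complementary estimate that controls $|A|$ directly. The route I would attempt is a Simons/Bochner identity for $|A|^2$: for a maximal hypersurface one has $\tfrac12\Delta|A|^2=|\nabla A|^2+\mathcal{C}$, with $\mathcal C$ collecting the quartic term in $A$ and a contraction of $A$ against the ambient curvature; integrating over the compact $M$ and using $\int_M\Delta|A|^2=0$ would reduce matters to signing $\int_M\mathcal C$. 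The delicate issue is that $\mathcal C$ involves the full ambient tidal operator $\overline{R}(N,\,\cdot\,,N,\,\cdot\,)$ and not merely its trace $\Ricb(N,N)$, so TCC alone does not obviously control it; closing this gap — whether via a rigidity in the equality case, a focal-point/Raychaudhuri argument exploiting that a stable compact maximal hypersurface is area-maximizing, or a structural feature I am not yet seeing — is the step I expect to be the crux.
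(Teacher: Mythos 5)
Your first part is correct and is exactly the paper's argument (Theorem \ref{teocpt}): since $N$ is timelike, the curvature hypothesis gives $\overline{\mathrm{Ric}}(N,N)\geq 0$, so the potential $\overline{\mathrm{Ric}}(N,N)+\mathrm{trace}(A^2)$ in the stability integrand is pointwise nonnegative and the second variation \eqref{2var} is nonpositive for every compactly supported normal variation; as you note, no completeness is needed.

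The compact case, however, is left genuinely open in your proposal, and the route you sketch would not close. The Simons formula for a maximal hypersurface (used in the paper's Theorem \ref{teoni2}) reads
\[
\tfrac12\Delta\,\mathrm{trace}(A^2)=|\nabla A|^2+\overline{\mathrm{Ric}}(N,N)\,\mathrm{trace}(A^2)+2\sum_{i<j}(\kappa_i-\kappa_j)^2\,\overline{\mathrm{Sect}}(E_i\wedge E_j)+\bigl(\mathrm{trace}(A^2)\bigr)^2,
\]
and, exactly as you suspect, the sectional-curvature term is not signed by the TCC alone; this is precisely why Theorem \ref{teoni2} assumes a lower bound on sectional curvatures \emph{in addition to} the Ricci bound. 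The idea you are missing is much softer and uses only what you have already established. By Lemma \ref{lemeq} (the Fischer-Colbrie--Schoen type characterization), $\lambda_1^L(M)\geq 0$ is \emph{equivalent} to the existence of a positive function $u$ with $Lu=0$, i.e.
\[
\Delta u=\bigl(\overline{\mathrm{Ric}}(N,N)+\mathrm{trace}(A^2)\bigr)\,u\;\geq\;0\qquad\text{on } M.
\]
On a compact $M$ the divergence theorem gives $\int_M\bigl(\overline{\mathrm{Ric}}(N,N)+\mathrm{trace}(A^2)\bigr)u=\int_M\Delta u=0$; since the integrand is nonnegative and $u>0$, the whole potential vanishes identically, so $\mathrm{trace}(A^2)\equiv 0$ and $\psi$ is totally geodesic. (Equivalently: $u$ is a positive subharmonic function on a compact manifold, hence constant.) Note that the argument also forces $\overline{\mathrm{Ric}}(N,N)\equiv 0$ along $M$, so your parenthetical claim that compact maximal hypersurfaces with $\overline{\mathrm{Ric}}(N,N)>0$ occur in TCC spacetimes is mistaken: there are none, consistently with the Einstein static universe, whose totally geodesic compact slices satisfy $\overline{\mathrm{Ric}}(\partial_t,\partial_t)=0$.
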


\noindent Note that in an oriented spacetime $\overline{M}$, the time orientation of $\overline{M}$ ensures that every spacelike hypersurface is oriented. In General Relativity, a spacetime with nonnegative Ricci curvature on timelike vectors 
is said to obey the Timelike Convergent Condition (TCC). It is usually argued that the TCC is the mathematical way to express that gravity, on average, attracts (see \cite{O'N}). Theorem \ref{intro:teostab} generalizes Corollary 5.6 of \cite{ARS} and Theorem 1 of \cite{PRR1}, where the authors show that compact maximal hypersurfaces in a spacetime $\overline{M}$ obeying the TCC are totally geodesic by also assuming the existence of certain infinitesimal symmetries in $\overline{M}$. As a corollary, we also have an alternative proof of Theorem 4.1 of \cite{BF}, a uniqueness result for vacuum spacetimes.

\begin{corointro}
	\label{intro:corovac}
	Let $M$ be a compact maximal hypersurface in a spacetime that obeys the Einstein vacuum equations without cosmological constant. Then, $M$ is totally geodesic.
\end{corointro}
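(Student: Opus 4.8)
The plan is to derive Corollary B as a direct consequence of the compact case of Theorem A. The key observation is that the Einstein vacuum equations without cosmological constant read $\Ricb = 0$, so in particular $\Ricb(v,v) = 0 \geq 0$ for every timelike vector $v$. Hence the ambient spacetime obeys the TCC (nonnegative Ricci curvature on timelike vectors), which is precisely the hypothesis required by Theorem A. Since every spacelike hypersurface in an oriented spacetime is automatically oriented (as remarked in the excerpt, using the time orientation), the compact maximal hypersurface $M$ satisfies all the hypotheses of the second assertion of Theorem A.

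Concretely, I would first recall that the Einstein field equations in the absence of matter and without cosmological constant assert $\Ricb - \tfrac{1}{2}\sb\, \overline{g} = 0$. Tracing this equation over an $(n+1)$-dimensional spacetime gives $\sb\bigl(1 - \tfrac{n+1}{2}\bigr) = 0$, whence $\sb = 0$ provided $n+1 \neq 2$, and substituting back yields $\Ricb = 0$ identically. (In the physically relevant dimension $n+1 = 4$ this is immediate.) Thus the vacuum condition forces the Ricci tensor to vanish, which trivially implies nonnegativity on timelike directions.

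With $\Ricb \equiv 0$ established, I would simply invoke Theorem A: since $M$ is a compact oriented maximal hypersurface in a spacetime with nonnegative Ricci curvature on timelike vectors, Theorem A guarantees that $\psi$ is stable and, by the compactness clause, that $M$ is totally geodesic. This completes the proof of the corollary.

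I do not expect any serious obstacle here, since the corollary is essentially an instantiation of Theorem A under the stronger hypothesis $\Ricb = 0$. The only point deserving a moment of care is verifying that the vacuum equations genuinely imply $\Ricb = 0$ rather than merely some sign condition; this is a standard algebraic manipulation via tracing, valid in all dimensions $n+1 \geq 3$. Once that is in hand, the statement follows immediately and no further geometric analysis is needed.
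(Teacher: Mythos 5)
Your proof is correct and follows exactly the route the paper intends: the vacuum equations, after tracing (valid since the ambient dimension is at least $3$), give $\overline{\mathrm{Ric}}\equiv 0$, so the TCC hypothesis of Theorem~A (proved as Theorem~\ref{teocpt}) is satisfied and the compactness clause yields that $M$ is totally geodesic. Nothing is missing.
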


\noindent If a maximal hypersurface $\psi:M\to\overline{M}$ is unstable, then there exist spacelike hypersurfaces of larger volume in $\overline{M}$ nearby $\psi$. This happens, for instance, for the equator of de Sitter spacetime, which is a saddle point for the volume functional. In fact, we have the following

\begin{teorintro}
	\label{intro:teocsc}
	Let $\overline{M}$ be an $(m+1)$-dimensional spacetime of constant curvature $\overline{\kappa}$ and let $\psi : M \longrightarrow \overline{M}$ be a complete \tcred{oriented} maximal hypersurface.
	\begin{itemize}[noitemsep,nolistsep]
		\item [i)] If $\overline{\kappa}>0$ then $M$ is compact and the immersion $\psi$ is totally geodesic and unstable.
		\item [ii)] If $\overline{\kappa}=0$ then $\psi$ is totally geodesic and stable.
		\item [iii)] If $\overline{\kappa}<0$ then $\psi$ is stable and the shape operator $A$ and the scalar curvature $\mathrm{S}$ of $M$ satisfy
		\[
			\mathrm{trace}(A^2)\leq -m\overline{\kappa}, \qquad \mathrm{S} \leq (m-2)m\overline{\kappa}.
		\]
		If $M$ is also compact, then $\psi$ is totally geodesic.
	\end{itemize}
\end{teorintro}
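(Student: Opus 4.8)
The plan is to reduce the three cases to two already available rigidity results---Theorem~\ref{intro:teostab} (stability under the TCC, with total geodesy when $M$ is compact) and Theorem~\ref{teoni2} (the Nishikawa--Ishihara bound for complete maximal hypersurfaces in spacetimes of constant curvature)---glued together by one curvature computation. Since $\overline{M}$ has constant curvature $\overline{\kappa}$, every vector $v$ satisfies $\overline{\mathrm{Ric}}(v,v)=m\overline{\kappa}\langle v,v\rangle$; as $\langle v,v\rangle<0$ on timelike vectors, this shows that $\overline{M}$ obeys the TCC if and only if $\overline{\kappa}\le0$, and that the timelike unit normal $\xi$ satisfies $\overline{\mathrm{Ric}}(\xi,\xi)=-m\overline{\kappa}$. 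Using maximality ($\mathrm{trace}(A)=0$) in the Gauss equation I also record $\mathrm{Ric}(X,X)=(m-1)\overline{\kappa}\langle X,X\rangle+\langle A^2X,X\rangle$ and, after tracing, $\mathrm{S}=m(m-1)\overline{\kappa}+\mathrm{trace}(A^2)$.

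For the stability statements I would insert these data into the second variation formula for normal variations $u\in C_c^\infty(M)$, namely \[\delta^2\mathrm{Vol}(u)=-\int_M\Big(|\nabla u|^2+\big(\overline{\mathrm{Ric}}(\xi,\xi)+\mathrm{trace}(A^2)\big)u^2\Big)\,dV,\] so that stability is equivalent to $\int_M\big(|\nabla u|^2+(\mathrm{trace}(A^2)-m\overline{\kappa})u^2\big)\ge0$ for all such $u$. When $\overline{\kappa}\le0$ the TCC holds, so Theorem~\ref{intro:teostab} gives at once stability in ii) and iii) and, in the compact case of iii), total geodesy; equivalently, for $\overline{\kappa}\le0$ the potential $\mathrm{trace}(A^2)-m\overline{\kappa}$ is nonnegative and the quadratic form is manifestly so. When $\overline{\kappa}\ge0$, Theorem~\ref{teoni2} applies to the complete maximal immersion and forces $A\equiv0$, which supplies the total geodesy asserted in i) and ii).

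It then remains to assemble the two endpoints. In i), total geodesy gives $\mathrm{Ric}=(m-1)\overline{\kappa}\langle\cdot,\cdot\rangle>0$, so Bonnet--Myers makes the complete $M$ compact; testing the second variation with the (now admissible) function $u\equiv1$ gives $\delta^2\mathrm{Vol}(1)=m\overline{\kappa}\,\mathrm{Vol}(M)>0$, so $\psi$ is unstable. In iii), Theorem~\ref{teoni2} yields the Ishihara bound $\mathrm{trace}(A^2)\le-m\overline{\kappa}$; substituting into $\mathrm{S}=m(m-1)\overline{\kappa}+\mathrm{trace}(A^2)$ gives $\mathrm{S}\le(m-2)m\overline{\kappa}$, while the compact total geodesy is precisely the compact case of Theorem~\ref{intro:teostab}.

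The difficulty is concentrated in the inputs rather than in the bookkeeping. The analytically hard step is the Ishihara bound of Theorem~\ref{teoni2}: it follows from a Simons-type inequality for $\mathrm{trace}(A^2)$ combined with the Omori--Yau maximum principle, which is available precisely because the Gauss equation above yields the lower bound $\mathrm{Ric}\ge(m-1)\overline{\kappa}$ on the complete manifold $M$. In the assembly itself the points needing care are the sign conventions in the second variation---they must render the de Sitter equator unstable while keeping the flat and negatively curved models stable---and the observation that the TCC persists throughout $\overline{\kappa}\le0$, which is what licenses the use of Theorem~\ref{intro:teostab} to obtain compact total geodesy in case iii).
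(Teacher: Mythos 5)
Your proposal is correct and follows essentially the same route as the paper: stability and compact total geodesy for $\overline{\kappa}\le0$ come from the TCC argument of Theorem~\ref{intro:teostab}, the trace bound and total geodesy come from Theorem~\ref{teoni2} applied with $c_1=-m\overline{\kappa}$, $c_2=\overline{\kappa}$, compactness in case i) comes from Bonnet--Myers, and instability is exhibited by the constant test function. The only cosmetic difference is that the paper phrases the instability step via the Rayleigh quotient characterization of $\lambda_1^L(M)$ rather than directly via the sign of the second variation, which is equivalent.
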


\noindent In case the ambient manifold $\overline{M}$ is a $3$-dimensional spacetime of constant curvature, we are able to provide more information on the topology of \tcred{orientable} complete maximal surfaces.

\begin{teorintro}
	\label{intro:teosup1}
	Let $\overline{M}$ be a $3$-dimensional spacetime of constant curvature $\overline{\kappa}$, $M$ a complete maximal \tcred{oriented} surface in $\overline{M}$.
	\begin{itemize}[noitemsep,nolistsep]
		\item [i)] If $\overline{\kappa}>0$ then $M$ is a totally geodesic, unstable round sphere of constant curvature $\overline{\kappa}$.
		\item [ii)] If $\overline{\kappa}=0$ then $M$ is totally geodesic, stable and it is either a Euclidean plane, or a flat cylinder or a flat torus.
		\item [iii)] If $\overline{\kappa}<0$ then $M$ is stable and has non-positive Gaussian curvature. If $M$ is compact then it is totally geodesic and its Euler characteristic satisfies
		\[
			\chi(M) = \frac{\overline{\kappa}}{2\pi}\,\mathrm{Vol}(M).
		\]
		If $M$ is non-compact but its total curvature and its Euler characteristic are finite, then
		\[
			\chi(M) \geq \frac{\overline{\kappa}}{2\pi}\,\mathrm{Vol}(M).
		\]
	\end{itemize}
\end{teorintro}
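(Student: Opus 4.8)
The plan is to reduce everything to Theorem~\ref{intro:teocsc} specialized to $m=2$, and then to upgrade its conclusions to the sharper two-dimensional statements using the Gauss equation, the Gauss--Bonnet theorem, and, in the non-compact case, the Cohn--Vossen inequality. The starting algebraic observation is that for a maximal surface the shape operator $A$ is trace-free, so its eigenvalues are $\pm\lambda$ and $\det A=-\lambda^{2}$, whence $\mathrm{trace}(A^{2})=2\lambda^{2}$. The Gauss equation for a spacelike surface in a Lorentzian space form of constant curvature $\overline{\kappa}$ then expresses the Gaussian curvature as
\[
K=\overline{\kappa}-\det A=\overline{\kappa}+\tfrac12\,\mathrm{trace}(A^{2}),
\]
so that $K\ge\overline{\kappa}$ everywhere, with equality precisely at the totally geodesic points. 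When $\overline{\kappa}<0$, feeding in the bound $\mathrm{trace}(A^{2})\le-2\overline{\kappa}$ from Theorem~\ref{intro:teocsc}(iii) gives $\overline{\kappa}\le K\le0$, which is the asserted non-positivity of the Gaussian curvature. All the stability/instability and totally geodesic assertions in the three cases are inherited verbatim from Theorem~\ref{intro:teocsc} with $m=2$.

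I would then dispose of the first two cases by standard surface theory. If $\overline{\kappa}>0$, Theorem~\ref{intro:teocsc}(i) says $M$ is compact, totally geodesic and unstable; since $A\equiv0$ the displayed formula forces $K\equiv\overline{\kappa}$, so $M$ is a compact oriented surface of constant positive curvature, and orientability identifies it with the round sphere of curvature $\overline{\kappa}$ (excluding the non-orientable real projective plane). If $\overline{\kappa}=0$, Theorem~\ref{intro:teocsc}(ii) gives $A\equiv0$ and stability, hence $K\equiv0$, so $M$ is a complete flat oriented surface; classifying these as quotients of $\mathbf{R}^{2}$ by a discrete fixed-point-free group of orientation-preserving isometries (necessarily a translation lattice of rank $0$, $1$ or $2$) yields exactly the plane, a flat cylinder, or a flat torus.

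The remaining case $\overline{\kappa}<0$ splits according to compactness. If $M$ is compact, Theorem~\ref{intro:teocsc}(iii) gives $A\equiv0$, so $K\equiv\overline{\kappa}$ and the Gauss--Bonnet theorem $\int_{M}K\,dA=2\pi\chi(M)$ produces the stated equality $\chi(M)=\frac{\overline{\kappa}}{2\pi}\,\mathrm{Vol}(M)$. If $M$ is non-compact with finite total curvature and finite Euler characteristic, I would invoke the Cohn--Vossen inequality $\int_{M}K\,dA\le2\pi\chi(M)$; integrating the pointwise bound $K\ge\overline{\kappa}$ gives $\int_{M}K\,dA\ge\overline{\kappa}\,\mathrm{Vol}(M)$, and combining the two inequalities yields $\overline{\kappa}\,\mathrm{Vol}(M)\le2\pi\chi(M)$, which is the claimed estimate (trivially true when $\mathrm{Vol}(M)=+\infty$, since $\overline{\kappa}<0$).

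The main obstacle is this last sub-case: one must verify that the hypotheses of the Cohn--Vossen inequality are genuinely met --- completeness (given), finite topological type (the finite Euler characteristic), and integrability of $K$ (supplied by the finite-total-curvature assumption together with $K\le0$) --- and then combine it correctly with the Gauss-equation bound $K\ge\overline{\kappa}$. Everything else is a matter of bookkeeping over Theorem~\ref{intro:teocsc} and invoking the classical classification of constant-curvature surfaces and the Gauss--Bonnet theorem.
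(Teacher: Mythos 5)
Your proposal is correct and follows essentially the same route as the paper's proof: reduce to Theorem~C with $m=2$, use the Gauss equation to get $K=\overline{\kappa}+\tfrac12\mathrm{trace}(A^2)\geq\overline{\kappa}$, and then apply Gauss--Bonnet in the compact cases and Cohn--Vossen in the non-compact $\overline{\kappa}<0$ case. The only (harmless) cosmetic differences are that you derive the curvature identity via $\det A$ rather than via the scalar curvature contraction, and you spell out the classification of complete flat oriented surfaces and the identification of the sphere via orientability, steps the paper treats more briefly.
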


\noindent Stable maximal hypersurfaces have been previously studied in \cite{BF} and \cite{F}, where the authors introduced the relative variational formulas and some characterizations in certain ambient spacetimes. More recently, sufficient conditions to ensure stability, in some physically relevant spacetimes, have been given in \cite{FRS}. In fact, a maximal hypersurface $\psi:M\longrightarrow\overline{M}$, with unit normal vector $N$ and shape operator $A$, is stable if and only if the differential operator $L = \Delta - \overline{\mathrm{Ric}}(N,N) - \mathrm{trace}(A^2)$ has non-negative first eigenvalue on $M$. More generally, a maximal hypersurface is said to have finite index if the stability operator $L$ has finite index. In a $3$-dimensional spacetime with nonnegative Ricci curvature on spacelike directions, we prove that any complete maximal surface with finite index has either finite or positive infinite total curvature, provided it is well defined. We remark that when $M$ is an oriented surface with Gaussian curvature $\mathrm{K}$, its total curvature is defined as
	$$
		\int_M \mathrm{K} = \int_M \mathrm{K}_+ - \int_M \mathrm{K}_-,
	$$
where $\mathrm{K}_+$ and $\mathrm{K}_-$ are the positive and negative parts of $\mathrm{K}$. Hence, when $M$ is noncompact, the total curvature is well defined only if at least one of the integrals on the right side is finite.

\begin{teorintro}
	\label{intro:teosup2}
	Let $\overline{M}$ be a $3$-dimensional spacetime with nonnegative Ricci curvature on spacelike vectors, $\psi:M\longrightarrow\overline{M}$ a complete maximal \tcred{oriented} surface immersed in $\overline{M}$. If $M$ has finite index and its total curvature is well defined, then
	$$
		\int_M \mathrm{K} > -\infty.
	$$
\end{teorintro}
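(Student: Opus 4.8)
The plan is to reduce the statement to the fact that the Schr\"odinger operator $-\Delta + 2\mathrm{K}$ has finite index, and then to convert finite index into parabolicity. Write $q = \overline{\mathrm{Ric}}(N,N) + \mathrm{trace}(A^2)$, so the stability operator is $L = \Delta - q$ and its index form is $f\mapsto \int_M(|\nabla f|^2 + q f^2)$. For a maximal surface the Gauss equation gives $\mathrm{K} = \overline{\mathrm{K}} + \tfrac12\,\mathrm{trace}(A^2)$, where $\overline{\mathrm{K}}$ is the ambient sectional curvature of the tangent plane $\mathrm{d}\psi(T_pM)$ (the timelike normal accounts for the sign of the $\mathrm{trace}(A^2)$ term); and in dimension three one has, for any orthonormal tangent frame $\{e_1,e_2\}$, the pointwise identity $2\overline{\mathrm{K}} = \overline{\mathrm{Ric}}(e_1,e_1)+\overline{\mathrm{Ric}}(e_2,e_2)+\overline{\mathrm{Ric}}(N,N)$. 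Since $e_1,e_2$ are spacelike, the hypothesis $\overline{\mathrm{Ric}}\ge0$ on spacelike vectors yields
\[
2\mathrm{K} = \overline{\mathrm{Ric}}(e_1,e_1)+\overline{\mathrm{Ric}}(e_2,e_2) + q \;\ge\; q .
\]
Therefore $\int_M(|\nabla f|^2 + 2\mathrm{K} f^2) \ge \int_M(|\nabla f|^2 + q f^2)$ for all $f\in C_c^\infty(M)$, so any subspace on which the left-hand form is negative is also negative for the index form of $L$; as $L$ has finite index, $-\Delta + 2\mathrm{K}$ has finite index on $M$ too.

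Finite index means stability outside a compact set: there is a compact $\Omega_0\subset M$ with $\int_M(|\nabla f|^2 + 2\mathrm{K} f^2)\ge0$ for every $f\in C_c^\infty(M')$, where $M' = M\setminus\Omega_0$. By the Fischer-Colbrie--Schoen criterion there is then a positive $u$ on $M'$ solving $\Delta u = 2\mathrm{K}\, u$. The key computation is the conformal change $\widehat g = u\,g$: taking $\varphi = \tfrac12\log u$ one gets $\Delta\varphi = \mathrm{K} - \tfrac12|\nabla\log u|^2$, whence the Gaussian curvature of $\widehat g$ is
\[
\widehat{\mathrm{K}} = e^{-2\varphi}\big(\mathrm{K}-\Delta\varphi\big) = \frac{1}{2u}\,|\nabla\log u|^2 \ge 0 .
\]
So $M'$ admits a conformal metric of nonnegative Gaussian curvature. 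I expect the completeness of $\widehat g$ (equivalently, the fact that $M$ is conformally a closed surface with finitely many punctures) to be the main obstacle; this is precisely where the Fischer-Colbrie/Huber circle of ideas is needed. Granting it, $(M',\widehat g)$ is a complete surface with $\widehat{\mathrm{K}}\ge0$, hence has at most quadratic area growth and is parabolic; since parabolicity is a conformal invariant in dimension two, $M'$ --- and therefore $M$ --- is parabolic.

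Finally I would use that the total curvature is well defined, distinguishing two cases. If $\int_M \mathrm{K}_- < \infty$ there is nothing to prove. If instead $\int_M \mathrm{K}_+ < \infty$, I would invoke parabolicity to pick cutoffs $\phi_R$, equal to $1$ on exhausting compact sets and with $\int_M|\nabla\phi_R|^2\to0$, and multiply them by a fixed function vanishing near $\partial\Omega_0$ and equal to $1$ off a compact set $\Omega_1\supset\Omega_0$, obtaining admissible $f_R\in C_c^\infty(M')$ with $f_R\to1$ off $\Omega_1$ and $\int_M|\nabla f_R|^2$ bounded. Rewriting the stability inequality as
\[
\int_{M'} \mathrm{K}_-\, f_R^2 \;\le\; \tfrac12\int_{M'} |\nabla f_R|^2 + \int_{M'} \mathrm{K}_+\, f_R^2 ,
\]
and letting $R\to\infty$ (monotone convergence on the left, $\int_M \mathrm{K}_+<\infty$ on the right) bounds $\int_{M\setminus\Omega_1}\mathrm{K}_-$; adding the finite contribution $\int_{\Omega_1}\mathrm{K}_-$ gives $\int_M \mathrm{K}_-<\infty$, that is $\int_M \mathrm{K} > -\infty$.
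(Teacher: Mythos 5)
Your argument is correct, and its first half coincides with the paper's proof of Theorem \ref{teoindex1}: the same Gauss-equation computation $2\mathrm{K}=\overline{\mathrm{Ric}}(e_1,e_1)+\overline{\mathrm{Ric}}(e_2,e_2)+\overline{\mathrm{Ric}}(N,N)+\mathrm{trace}(A^2)\geq \overline{\mathrm{Ric}}(N,N)+\mathrm{trace}(A^2)$, the same passage from finite index to a positive solution of a Schr\"odinger equation involving $\mathrm{K}$ outside a compact set (the paper uses $\Delta u=\mathrm{K}u$ and the metric $u^2g$, you use $\Delta u=2\mathrm{K}u$ and $ug$ --- an immaterial difference), the same conformal deformation to a complete metric with nonnegative curvature via Fischer--Colbrie, and the same deduction of linear growth of $\mathrm{Vol}(\partial B_r)$. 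Where you genuinely diverge is the endgame. The paper stays with the deformed operator $\tilde L=\tilde\Delta-\mathrm{K}/u^2$, radializes it to the ODE $(wz')'+Awz=0$, and invokes oscillation theory (Corollary 2.9 of \cite{MMR}) to produce, under the contradiction hypothesis $\int_M\mathrm{K}_-=+\infty$, an annular test function violating $\lambda_1^{\tilde L}(M\setminus\overline\Omega)\geq0$. You instead convert the linear boundary growth into parabolicity of $(M,\widehat g)$, transfer it to $(M,g)$ by conformal invariance, and feed the resulting null-capacity cutoffs $f_R$ directly into the stability inequality $\int\mathrm{K}_-f_R^2\leq\tfrac12\int|\nabla f_R|^2+\int\mathrm{K}_+f_R^2$ to bound $\int_M\mathrm{K}_-$ by $\int_M\mathrm{K}_+$ plus a constant. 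Your route is more elementary and self-contained (no ODE comparison machinery), at the cost of explicitly invoking conformal invariance of parabolicity; the paper's route packages the same dichotomy inside the oscillation criterion and never needs to mention parabolicity of $(M,g)$ itself. Both are sound, and your case split on which of $\int\mathrm{K}_\pm$ is finite correctly exploits the hypothesis that the total curvature is well defined.

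The one step you flag as ``granted'' --- completeness of $\widehat g=ug$ --- does follow from exactly the tool you name, but it deserves the two-line verification: setting $v=\sqrt u$ one computes
\begin{equation*}
	\Delta v \;=\; \tfrac12 u^{-1/2}\Delta u - \tfrac14 u^{-3/2}|\nabla u|^2 \;=\; \mathrm{K}v - \tfrac14 u^{-3/2}|\nabla u|^2 \;\leq\; \mathrm{K}v ,
\end{equation*}
so Fischer--Colbrie's theorem (Lemma \ref{confsup}) applies to $v$ and yields completeness of $v^2g=ug$. You should also, as the paper does, extend $u$ from $M\setminus\overline{\Omega_0}$ to a positive smooth function on all of $M$ before invoking that lemma, since it is stated for globally defined conformal factors; this is a harmless modification because both the differential inequality and the curvature sign are only needed outside a compact set.
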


A physically relevant family of Lorentzian manifolds is that of Generalized Robertson-Walker (GRW) spacetimes. They can be defined as product manifolds $\overline{M}=I\times F$, where $I\subseteq\mathbb{R}$ is an open interval with the standard negative definite metric $-dt^2$ and $(F,g_F)$ is a Riemannian manifold. On $\overline{M}$ we put a Lorentzian warped product metric of the form $\overline{g} = -dt^2 + \rho(t)^2g_F$, with $\rho$ a smooth positive function on $I$. In these ambient manifolds we give the following generalization of the first part of Theorem \ref{intro:teosup1}, suggested by the work of Albujer and Al\'ias, \cite{AA}, where the authors consider maximal surfaces in Lorentzian products, that is, the case $\rho\equiv1$.

\begin{teorintro}
	\label{intro:teosup3}
	Let $\overline{M}$ be a $3$-dimensional GRW spacetime $\overline{M}=I\times_{\rho}F$ with nonnegative sectional curvatures on spacelike $2$-planes and let $\psi:M\longrightarrow\overline{M}$ be a complete maximal surface. Then $\psi$ is totally geodesic and one of the following cases occurs:
	\begin{itemize}[noitemsep,nolistsep]
		\item [i)] $\psi(M)$ is a spacelike slice $\{t_0\}\times F$ for some $t_0\in I$ such that $\rho'(t_0)=0$,
		\item [ii)] $F$ is a Riemann surface with a complete, flat metric $g_F$ and $\overline{M}$ is the product manifold $\mathbb{R}\times F$ with the flat metric $-dt^2 + g_F$,
		\item [iii)] $F$ is a compact Riemann surface with a metric $g_F$ of constant positive Gaussian curvature, $M$ is a round sphere and the spacetime $\overline{M}$ has constant positive curvature in the smallest slab $I'\times F\subseteq\overline{M}$, $I'\subseteq I$, such that $\psi(M)\subseteq I'\times F$.
	\end{itemize}
\end{teorintro}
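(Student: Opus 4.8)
The plan is to reduce the problem to the intrinsic geometry of $M$ and then to the classification of surfaces carrying a closed conformal field. Write $A$ for the shape operator of $\psi$ with respect to the future-pointing unit normal $N$; maximality gives $\mathrm{trace}(A)=0$, so $A$ has eigenvalues $\pm\lambda$ and $\det A=-\tfrac12\,\mathrm{trace}(A^2)$. The Gauss equation for the spacelike surface then reads
$$\mathrm{K}=\overline{\mathrm{K}}(T_pM)-\det A=\overline{\mathrm{K}}(T_pM)+\tfrac12\,\mathrm{trace}(A^2),$$
where $\overline{\mathrm K}(T_pM)$ is the ambient sectional curvature of the (spacelike) tangent plane. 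Since by hypothesis $\overline{\mathrm K}\geq0$ on every spacelike $2$-plane, we get $\mathrm K\geq\tfrac12\,\mathrm{trace}(A^2)\geq0$ on $M$. A complete surface with $\mathrm K\geq0$ is parabolic when noncompact (nonnegative curvature forces at most quadratic area growth), while if it is compact Gauss--Bonnet forces $\chi(M)\geq0$, so $M$ is a sphere or a torus. By Cohn--Vossen, in the noncompact case $\int_M\tfrac12\,\mathrm{trace}(A^2)\leq\int_M\mathrm K\leq 2\pi\chi(M)<\infty$, so $\mathrm{trace}(A^2)\in L^1(M)$.

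Next I would bring in the warped structure through the closed conformal field $V=\rho(t)\partial_t$, with $\overline{\nabla}_XV=\rho'(t)X$, and the height $h=t\circ\psi$. A direct computation gives $\Delta h=-\tfrac{\rho'(h)}{\rho(h)}\big(2+|\nabla h|^2\big)$ and, for the support function $f=\langle V,N\rangle=\rho(h)\langle N,\partial_t\rangle<0$,
$$\Delta f=\mathrm{trace}(A^2)\,f+\langle N,\partial_t\rangle\,|\nabla h|^2\Big(\rho''(h)-\frac{\mathrm K_F+\rho'(h)^2}{\rho(h)}\Big).$$
Expressing the three-dimensional GRW curvatures in terms of $\rho$ and $\mathrm K_F$ (radial $\rho''/\rho$, fibre $(\mathrm K_F+\rho'^2)/\rho^2$), the hypothesis $\overline{\mathrm K}\geq0$ on all spacelike planes is equivalent to $\mathrm K_F+\rho'^2\geq\rho\rho''$ together with $\mathrm K_F+\rho'^2\geq0$; hence the round bracket above is $\leq0$ while $\langle N,\partial_t\rangle<0$, so the two summands of $\Delta f$ have opposite signs and each vanishes exactly in the conjectured rigid configurations.

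The heart of the argument, and the step I expect to be the main obstacle, is to upgrade this to $A\equiv0$. Precisely because the two terms of $\Delta f$ have opposite signs, $f$ is neither sub- nor superharmonic, and $h$ (hence the angle $\langle N,\partial_t\rangle$) may be unbounded, so parabolicity cannot be applied naively; this is exactly the difficulty already present in the Lorentzian product case of Albujer--Al\'ias \cite{AA}. The plan is to run a maximum-principle-at-infinity (Liouville-type) argument combining parabolicity, the bound $\mathrm{trace}(A^2)\in L^1(M)$, and the $f$-identity, forcing each summand of $\Delta f$ to vanish, i.e.\ $A\equiv0$ and the equality $\mathrm K_F+\rho'^2=\rho\rho''$ wherever $\nabla h\neq0$. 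On a compact $M$ the torus case is immediate, since Gauss--Bonnet forces $\mathrm K\equiv0$ and then $\tfrac12\,\mathrm{trace}(A^2)=\mathrm K-\overline{\mathrm K}(T_pM)\leq0$ gives $A\equiv0$; the spherical case follows from the same maximum-principle input. This is the only genuinely analytic point; everything else is algebraic.

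Once $\psi$ is totally geodesic the classification is clean. From $A\equiv0$ we get $\nabla f=\rho\,A\nabla h=0$, so $f$ is constant, and $V^{\top}=-\rho\nabla h=-\nabla\big(\int_0^h\rho\big)$ satisfies $\nabla^M_XV^{\top}=\rho'(h)X$; thus $\tau=-\int_0^h\rho$ is a non-trivial solution of $\mathrm{Hess}\,\tau=\rho'(h)\,g_M$, so $M$ carries a closed conformal (concircular) field, which also gives $\mathrm K=\rho''(h)/\rho(h)$ on $\{\nabla h\neq0\}$. If $\nabla h\equiv0$ then $\psi(M)$ is a slice $\{t_0\}\times F$ and maximality forces $\rho'(t_0)=0$: this is case i). Otherwise I invoke the Obata--Tashiro classification of complete surfaces admitting such a field. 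If $M$ is compact then $\tau$ has a maximum and a minimum and $M$ is a round sphere, whence $\mathrm K=\rho''/\rho$ is a positive constant $\overline\kappa$; together with $\mathrm K_F+\rho'^2=\rho\rho''$ this gives $\rho''=\overline\kappa\,\rho$ and $\mathrm K_F\equiv\overline\kappa\rho^2-\rho'^2\equiv\mathrm{const}>0$, so $\overline M$ has constant curvature $\overline\kappa$ on the slab $I'\times F$ and Theorem~\ref{intro:teocsc} identifies $M$ as the totally geodesic sphere of case iii). If $M$ is noncompact and not a slice, then Tashiro's theorem together with $\mathrm K\geq0$ and completeness leaves only the flat product $\mathbb{R}\times F$ with $F$ flat, which is case ii).
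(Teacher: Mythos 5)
Your overall architecture is close to the paper's (parabolicity from $\mathrm{K}\geq 0$, the support function of the closed conformal field, then a classification once $A\equiv 0$), but the central step is both left unproved and premised on a sign error. With $v=-\overline{g}(T,N)=\rho(\tau)\cosh\theta>0$ (your $f=-v$), the correct identity for a maximal surface is $\Delta v=\big(\overline{\mathrm{Ric}}(N,N)+\mathrm{trace}(A^2)+2\rho''(\tau)/\rho(\tau)\big)v$, and formula (\ref{curvbound2}) shows that $\overline{\mathrm{Ric}}(N,N)+2\rho''(\tau)/\rho(\tau)=\sinh^2\theta\,\rho^{-2}\big(\mathrm{K}^F-\rho\rho''+(\rho')^2\big)$, which is \emph{nonnegative} under the hypothesis (Lemma \ref{charsectbound} with $C_1=0$). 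So the two summands of $\Delta v$ have the \emph{same} sign: $v$ is subharmonic, $f$ is superharmonic, and there is no sign conflict. (You can check this against the Lorentzian product case, where $\Delta\Theta=\Theta(\|A\|^2+\mathrm{K}^F|\nabla h|^2)$ with $\Theta<0$: again both terms agree in sign.) Your curvature term carries the opposite sign, and this misdiagnosis leads you to frame the difficulty as an "opposite signs" obstruction requiring an unspecified "maximum-principle-at-infinity argument combining parabolicity, $\mathrm{trace}(A^2)\in L^1$ and the $f$-identity". That argument is never carried out, and it is not clear it could be: constancy of $f$ alone would not force two opposite-signed summands to vanish separately, and the $L^1$ bound on $\mathrm{trace}(A^2)$ is not used anywhere concretely. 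The actual mechanism in the paper is different and essential: the refined Kato-type inequality (\ref{normgradvmax}), $|\nabla v|^2\leq\tfrac{1}{2}\mathrm{trace}(A^2)(v^2-\rho(\tau)^2)$, valid because $A$ is trace-free, shows that $1/v$ (indeed $v^{-\alpha}$ for $\alpha\in[0,1]$) is a \emph{positive superharmonic} function; parabolicity of $M$ then forces $v$ to be constant, and only then does $\Delta v=0$, together with the nonnegativity of \emph{both} summands, yield $\mathrm{trace}(A^2)\equiv 0$ and the curvature equality. Subharmonicity of the possibly unbounded $v$ would not suffice on a parabolic surface; the passage to $1/v$ via the refined Kato inequality is the missing idea.

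The endgame also needs more care than you give it. Granting $A\equiv 0$, your Tashiro/Obata route ($\mathrm{Hess}(\eta)=-\rho'(\tau)g$, hence a concircular field and $\mathrm{K}=\rho''(\tau)/\rho(\tau)$ off the critical set) is a legitimate alternative to the paper's unique-continuation argument, but two points are glossed over. In the compact case Tashiro only gives a rotationally symmetric sphere; roundness and constancy of $\rho''/\rho$ require the additional observation (used in the paper) that $\rho\rho''-(\rho')^2$ constant forces $\rho''/\rho$ constant on the interval $\tau(M)$, combined with the fact that the equality $\mathrm{K}^F+(\rho')^2=\rho\rho''$ holds on a product set, so that $\mathrm{K}^F$ is constant. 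In the non-compact, non-slice case, "Tashiro plus $\mathrm{K}\geq0$ leaves only the flat product" is not immediate: you must still rule out $\rho'\equiv c\neq 0$ on $\tau(M)$ and show $\tau(M)=\mathbb{R}$; the paper does this by noting that $v$ constant forces $\rho$ bounded on $\tau(M)$ and that $\eta$ is a nonconstant super/sub/harmonic function on a parabolic surface, hence unbounded. Without these steps cases i)--iii) are not actually separated.
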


\noindent In a GRW spacetime with sectional curvatures bounded below, the image of a  complete maximal hypersurface whose projection on the $I$ factor is relatively compact in $I$ must always intersect at least one totally geodesic spacelike slice of the ambient spacetime. More precisely, we have the following

\begin{teorintro}
	\label{intro:teoGRW1}
	Let $\overline{M} = I\times_{\rho}F$ be a GRW spacetime whose sectional curvatures on spacelike $2$-planes are bounded below. Let $\psi:M\longrightarrow\overline{M}$ be a complete maximal hypersurface, $I'\subseteq I$ the smallest interval \tcred{closed in $I$} such that $\psi(M)\subseteq I'\times F$. If $I'=[t_{\ast},t^{\ast}]$ for some $t_{\ast},t^{\ast}\in I$, then $\rho'(t^{\ast})\geq0$ and $\rho'(t_{\ast})\leq0$. In particular, if $\psi(M)$ is contained in a slab $[a,b]\times F$, $a,b\in I$, then there exists $t_0\in[a,b]$ such that $\rho'(t_0)=0$ and
	\begin{itemize}[noitemsep,nolistsep]
		\item [i)] if $\rho'<0$ on $[a,t_0)$ and $\rho'>0$ on $(t_0,b]$, then $\psi(M)$ must intersect the spacelike slice $\{t_0\}\times F$,
		\item [ii)] if $\rho'>0$ on $[a,t_0)$ and $\rho'<0$ on $(t_0,b]$, then $\psi(M) = \{t_0\}\times F$.
	\end{itemize}
\end{teorintro}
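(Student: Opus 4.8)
The plan is to reduce the whole statement to the behaviour of the height function $h=\pi_I\circ\psi:M\to I$ at the extremes of its range, by means of a single Laplacian identity, and then to run a maximum principle on the (possibly noncompact) complete hypersurface $M$. First I would exploit the warped product structure: the field $K=\rho(t)\partial_t$ is a closed conformal timelike field on $\overline M$ with $\overline\nabla_X K=\rho'(t)X$ for every $X$, so that $K=-\overline\nabla(\sigma\circ t)$, where $\sigma$ is a primitive of $\rho$ (hence strictly increasing). Restricting to $M$ one has $\nabla\sigma(h)=-K^\top$, and a direct computation of $\mathrm{div}_M K^\top$ in an orthonormal frame produces $m\rho'(h)$ plus a term carrying $\mathrm{trace}(A)$, where $N$ is the future unit normal and $A$ the shape operator. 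Since $\psi$ is maximal, $\mathrm{trace}(A)=0$, and therefore
\[
\Delta\,\sigma(h)=-\,\mathrm{div}_M K^\top=-m\,\rho'(h).
\]
This identity is the heart of the matter: the sign of $\Delta\sigma(h)$ is governed entirely by $-\rho'(h)$, and it is consistent with the slice case, where both sides vanish exactly when $\rho'(t_0)=0$.

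The step I regard as the main point is verifying that a maximum principle is available on $M$. Because $M$ is spacelike its tangent planes are spacelike $2$-planes of $\overline M$, so the Gauss equation together with $\mathrm{trace}(A)=0$ yields, for a unit $X\in T_pM$,
\[
\mathrm{Ric}_M(X,X)=\sum_{j}\overline K(X,e_j)+|AX|^2\ \ge\ (m-1)\,\kappa_0,
\]
where $\kappa_0$ is the assumed lower bound for the ambient sectional curvatures on spacelike $2$-planes. The decisive feature is that the extrinsic term enters with a \emph{favourable} sign for spacelike hypersurfaces, so $\mathrm{Ric}_M$ is bounded below by a constant even though $A$ itself need not be bounded. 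Hence $M$ is a complete manifold with Ricci curvature bounded below, and the Omori--Yau maximum principle applies.

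Assume now $I'=[t_\ast,t^\ast]$ with $t_\ast,t^\ast\in I$; then $h$ is bounded, $\sup_M h=t^\ast$, $\inf_M h=t_\ast$, and $\sigma(h)$ is bounded with $\sup_M\sigma(h)=\sigma(t^\ast)$. Applying Omori--Yau to $\sigma(h)$ at its supremum produces a sequence $x_k$ with $\sigma(h(x_k))\to\sigma(t^\ast)$ and $\limsup_k\Delta\sigma(h)(x_k)\le0$; by the identity this reads $\liminf_k\rho'(h(x_k))\ge0$, and since $\sigma$ is a homeomorphism $h(x_k)\to t^\ast$, whence $\rho'(t^\ast)\ge0$ by continuity. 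Running the same argument for $-\sigma(h)$ at the infimum gives $\rho'(t_\ast)\le0$, which is the first assertion. (When $M$ is compact this is merely the ordinary maximum principle at interior extrema.)

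For the ``in particular'' statements, if $\psi(M)\subseteq[a,b]\times F$ then $a\le t_\ast\le t^\ast\le b$ and $\rho'(t^\ast)\ge0\ge\rho'(t_\ast)$, so the intermediate value theorem gives $t_0\in[t_\ast,t^\ast]$ with $\rho'(t_0)=0$. In case (ii) the sign pattern of $\rho'$ forces $t^\ast\le t_0\le t_\ast$, hence $t_\ast=t^\ast=t_0$, so $h\equiv t_0$ and $\psi$ maps $M$ into the slice $\{t_0\}\times F$; as this slice is totally geodesic ($\rho'(t_0)=0$), $\psi$ is a local isometry from a complete manifold onto it, i.e. a covering map, giving $\psi(M)=\{t_0\}\times F$. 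In case (i) the same analysis gives $t_\ast\le t_0\le t^\ast$; since $M$ is connected, $h(M)$ is an interval with infimum $t_\ast$ and supremum $t^\ast$, so it contains $t_0$ and $\psi(M)$ meets $\{t_0\}\times F$ whenever $t_\ast<t_0<t^\ast$. The delicate point I expect here is excluding the possibility that $\psi(M)$ stays strictly on one side and is only asymptotic to the slice (the degenerate case $t_0\in\{t_\ast,t^\ast\}$ not attained): there $\rho'(h)$ has a strict sign on $M$, so $\sigma(h)$ is a bounded, strictly sub- or super-harmonic function, and this configuration must be ruled out through the maximum principle at infinity afforded by the Ricci lower bound, forcing the extreme value $t_0$ to be attained.
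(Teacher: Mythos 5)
Your proposal is correct in substance and follows essentially the same route as the paper: the identity $\Delta\eta=-m\rho'(\tau)$ for $\eta$ a primitive of $\rho$ composed with the height function (formula (\ref{lapetamax})), the lower bound $\mathrm{Ric}\geq(m-1)C$ on $M$ obtained from the Gauss equation together with maximality and the spacelike sectional curvature bound (as in the proof of Theorem \ref{teoni2}), and a maximum principle applied to $\pm\eta$ at its bounded extrema; the paper packages this as Theorem \ref{teosu} (which uses the weak maximum principle, a consequence of the Omori--Yau principle you invoke) combined with Corollary \ref{teoGRW1}. Two remarks. First, your covering-map argument for the surjectivity $\psi(M)=\{t_0\}\times F$ in case (ii) is a useful addition: the paper's Corollary \ref{teoGRW1} only records the containment $\psi(M)\subseteq\{t_0\}\times F$, and connectedness of $F$ plus completeness of $M$ is indeed what upgrades this to equality. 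Second, the ``delicate point'' you flag in case (i) --- attainment of $t_0$ when $t_0$ coincides with $t_*$ or $t^*$ --- is real, but your proposed resolution does not close it: since $\rho'(t_0)=0$, in that configuration $\eta$ would be a bounded, strictly superharmonic function with $\Delta\eta\to 0$ along any minimizing sequence, and a complete manifold with Ricci merely bounded below can carry bounded nonconstant superharmonic functions, so neither the weak nor the Omori--Yau maximum principle produces a contradiction there. The paper's own proof passes over this sub-case silently (it deduces only $\tau_*\leq t_0\leq\tau^*$ and stops), so you are not worse off than the source, but be aware that genuinely closing it would require an extra ingredient (for instance parabolicity of $M$, which is not available in this generality).
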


\noindent As an application of Theorems \ref{intro:teoGRW1} and \ref{intro:teocsc}, we give a simple proof of the following Frankel type result.

\begin{corointro}
	\label{intro:corodS}
	Let $\overline{M}=\mathbb{S}^{m+1}_1(\overline{\kappa})$ be the $(m+1)$-dimensional de Sitter spacetime of constant curvature $\overline{\kappa}>0$ and let $\psi_0 : M_0\to\overline{M}$, $\psi_1 : M_1\to\overline{M}$ be two complete maximal hypersurfaces. Then $\psi_0(M_0) \cap \psi_1(M_1) \neq \varnothing$.
\end{corointro}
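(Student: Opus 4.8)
The plan is to exploit the homogeneity of de Sitter spacetime so that the whole statement reduces to a single application of Theorem~\ref{intro:teoGRW1}. First I would record the global GRW representation $\overline{M}=\mathbb{S}^{m+1}_1(\overline{\kappa})=\mathbb{R}\times_{\rho}\mathbb{S}^m$ with warping function $\rho(t)=\overline{\kappa}^{-1/2}\cosh(\sqrt{\overline{\kappa}}\,t)$, so that $\rho'(t)=\sinh(\sqrt{\overline{\kappa}}\,t)$ vanishes only at $t=0$, with $\rho'<0$ on $(-\infty,0)$ and $\rho'>0$ on $(0,+\infty)$; in particular $\{0\}\times\mathbb{S}^m$ is the unique totally geodesic spacelike slice. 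By Theorem~\ref{intro:teocsc}~i) both $\psi_0$ and $\psi_1$ are compact and totally geodesic.

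The crux is to move $\psi_0(M_0)$ onto the standard slice $\{0\}\times\mathbb{S}^m$ by an ambient isometry. Since $M_0$ is complete, spacelike and totally geodesic, in the hyperboloid model $\mathbb{S}^{m+1}_1(\overline{\kappa})=\{\langle x,x\rangle=\overline{\kappa}^{-1}\}\subset\mathbf{L}^{m+2}$ it is cut out by a linear hyperplane $a^{\perp}$ through the origin; completeness together with spacelikeness forces the normal direction $a$ to be timelike, so that $\psi_0(M_0)=\mathbb{S}^{m+1}_1(\overline{\kappa})\cap a^{\perp}$ is a round ``equator'' orthogonal to the timelike axis $a$. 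As the isometry group $O(m+1,1)$ acts transitively on unit timelike vectors, there is an isometry $\Phi$ of $\overline{M}$ carrying $a$ to the standard time axis, hence carrying $\psi_0(M_0)$ onto $\{0\}\times\mathbb{S}^m$.

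Because $\Phi$ preserves the maximal and complete character of hypersurfaces, $\Phi\circ\psi_1$ is again a complete maximal hypersurface, compact by Theorem~\ref{intro:teocsc}~i), so its projection to the $\mathbb{R}$ factor is a compact interval and $\Phi(\psi_1(M_1))$ lies in a slab $[\alpha,\beta]\times\mathbb{S}^m$. Theorem~\ref{intro:teoGRW1} then yields $t_0\in[\alpha,\beta]$ with $\rho'(t_0)=0$; by uniqueness of the critical point, $t_0=0$, and the sign pattern of $\rho'$ places us exactly in case~i) of that theorem. Hence $\Phi(\psi_1(M_1))$ meets $\{0\}\times\mathbb{S}^m=\Phi(\psi_0(M_0))$, and applying $\Phi^{-1}$ gives $\psi_0(M_0)\cap\psi_1(M_1)\neq\varnothing$.

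The main obstacle is this reduction step: one must justify that a complete totally geodesic spacelike hypersurface of de Sitter is precisely an equatorial sphere orthogonal to a timelike axis, and that it can be normalized to the standard slice by an ambient isometry. Both are classical facts for space forms---the first because totally geodesic submanifolds of a constant-curvature space are intersections with linear subspaces of the model, the second by transitivity of $O(m+1,1)$ on the timelike unit hyperboloid---but they are exactly what allows a single invocation of Theorem~\ref{intro:teoGRW1} to replace an explicit intersection computation. Once this is in place, the remaining checks (the explicit warping function, the sign of $\rho'$, and the slab containment) are routine.
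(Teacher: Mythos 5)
Your proposal is correct and follows essentially the same route as the paper's proof of Corollary \ref{corodS}: represent $\mathbb{S}^{m+1}_1(\overline{\kappa})$ as the GRW spacetime $\mathbb{R}\times_{\rho}\mathbb{S}^m(\overline{\kappa})$, use Theorem \ref{teocsc} to get compactness and total geodesy, normalize $\psi_0(M_0)$ onto the equatorial slice $\{0\}\times F$ by an ambient isometry, and apply point i) of Corollary \ref{teoGRW1} to $\psi_1$. The only difference is that you spell out, via the hyperboloid model and the transitivity of $O(m+1,1)$ on unit timelike vectors, the existence of the normalizing isometry that the paper merely asserts.
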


\noindent If $\rho$ satisfies $\rho''\leq0$ on $I$, then every maximal hypersurface in $\overline{M}=I\times_{\rho}F$ is stable, as observed in Theorem 9 of \cite{FRS}. On the other hand, if a compact maximal hypersurface $M$ is stable in $\overline{M}$ and $\rho''\geq0$ on $I$, then $\rho''\equiv0$ on $M$. More precisely, we have the following

\begin{teorintro}
	\label{intro:teoGRW2}
	Let $M$ be a complete \tcred{oriented} stable maximal hypersurface in a GRW spacetime $\overline{M} = I \times_{\rho} F$ and let $I'\subseteq I$ be the smallest interval such that $\psi(M)\subseteq I'\times F$.
	\begin{itemize}[noitemsep,nolistsep]
		\item [i)] If $M$ is compact then either $\rho''\equiv0$ or $\rho''$ attains both positive and negative values on $I'$.
		\item [ii)] If $M$ is non-compact and, for some $o\in M$, the normal vector field $N$ of $M$ satisfies
		$$
			\lim_{r\to+\infty} \int_a^r\left(\int_{\partial_{B_r}} \overline{g}(T,N)^2\right)^{-1} = +\infty,
		$$
		for some (hence any) $a>0$, where $B_r$ is the geodesic ball of $(M,g)$ with radius $r$ centered at $o$ and $T=\rho(t)\partial_t$, then either $\rho''\equiv0$ on $I'$ or there exists $t_0\in I'$ such that $\rho''(t_0)<0$.
	\end{itemize}
\end{teorintro}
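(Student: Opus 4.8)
The plan is to exploit the closed conformal field $T=\rho(t)\partial_t$, which satisfies $\overline{\nabla}_XT=\rho'(t)X$ for every $X$, through its normal support function. Write $\tau=t\circ\psi$ for the height function, $\Theta=\overline{g}(N,\partial_t)\le-1$ for the angle function, and set $\theta=\overline{g}(N,T)=\rho(\tau)\Theta$. The first and main step is to apply the stability operator $L=\Delta-\Ricb(N,N)-\mathrm{trace}(A^2)$ to $\theta$. Since $\nabla\theta=-A\,T^{\top}$, a divergence computation together with the Codazzi equation (which for a maximal hypersurface gives $\mathrm{div}\,A=-\Ricb(N,\cdot)^{\top}$) yields $\Delta\theta=\Ricb(N,T^{\top})+\mathrm{trace}(A^2)\,\theta$. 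Writing $T^{\top}=T+\theta N$ makes the two curvature terms cancel, and the GRW identities $\Ricb(\partial_t,\partial_t)=-\tfrac{m\rho''}{\rho}$, $\Ricb(\partial_t,\text{$F$-directions})=0$ give $\Ricb(N,\partial_t)=\tfrac{m\rho''}{\rho}\Theta$. Hence one obtains the clean identity
\begin{equation}\label{eq:GRW2key}
  L\theta=\Ricb(N,T)=m\,\rho''(\tau)\,\Theta .
\end{equation}
I would cross-check \eqref{eq:GRW2key} on a slice $\rho'(t_0)=0$, where $\theta\equiv-\rho(t_0)$ and directly $L\theta=-m\rho''(t_0)=m\rho''\Theta$, and against the case $\rho\equiv1$, where $T$ is Killing and $\theta$ is a Jacobi field ($L\theta=0=m\rho''\Theta$).

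For part (i), since $M$ is compact I would insert $f=\theta$ into the stability inequality $\int_M\big(|\nabla f|^2+(\Ricb(N,N)+\mathrm{trace}(A^2))f^2\big)\ge0$, which after integrating by parts is $\int_M\theta\,L\theta\le0$. By \eqref{eq:GRW2key} this reads
\[
  m\int_M\rho''(\tau)\,\rho(\tau)\,\Theta^2\le0 .
\]
As the weight $\rho\,\Theta^2$ is strictly positive, if $\rho''\ge0$ on $I'$ then $\rho''\equiv0$, recovering the remark that precedes the statement. To handle the remaining one‑signed case I would use the maximal identity $\Delta(\rho\circ\tau)=-\tfrac{(\rho')^2}{\rho}(m+|\nabla\tau|^2)+\rho''|\nabla\tau|^2$, which integrated over the compact $M$ gives $\int_M\rho''|\nabla\tau|^2=\int_M\tfrac{(\rho')^2}{\rho}(m+|\nabla\tau|^2)\ge0$; if $\rho''\le0$ on $I'$ this forces $\rho'(\tau)\equiv0$, hence $\Delta\tau\equiv0$ and, by compactness, $\tau$ constant, so that $\psi(M)$ is a single slice and $I'$ degenerates to a point. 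Thus, as long as $I'$ is a genuine interval, neither one‑signed alternative survives and $\rho''$ must either vanish identically or attain both signs on $I'$.

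For part (ii) I would argue by contradiction, assuming $\rho''\ge0$ on $I'$ and $\rho''\not\equiv0$. Then $v=-\theta=\rho|\Theta|>0$ is smooth and, by \eqref{eq:GRW2key}, $Lv=m\rho''|\Theta|\ge0$. Plugging the compactly supported test function $f=\lambda v$ into the stability inequality and integrating by parts (using $\int_M|\nabla(\lambda v)|^2=\int_M v^2|\nabla\lambda|^2-\int_M\lambda^2v\,\Delta v$) gives
\[
  m\int_M\lambda^2\,\rho''(\tau)\,\rho(\tau)\,\Theta^2\ \le\ \int_M v^2|\nabla\lambda|^2=\int_M\overline{g}(T,N)^2\,|\nabla\lambda|^2 .
\]
It then remains to drive the right‑hand side to zero. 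Choosing $\lambda$ radial with respect to the distance from $o$, the coarea formula turns $\int_M v^2|\nabla\lambda|^2$ into the one–dimensional energy $\int_a^{R}\lambda'(s)^2\big(\int_{\partial B_s}\overline{g}(T,N)^2\big)\,ds$, whose minimum over $\lambda(a)=1$, $\lambda(R)=0$ equals $\big(\int_a^{R}(\int_{\partial B_s}\overline{g}(T,N)^2)^{-1}\,ds\big)^{-1}$. The stated hypothesis makes this tend to $0$ as $R\to\infty$, while the corresponding optimal $\lambda$ tends to $1$ pointwise; hence by Fatou $m\int_M\rho''\rho\,\Theta^2\le0$, and with $\rho''\ge0$ this forces $\rho''\equiv0$ on $\tau(M)$, hence on $I'$, a contradiction.

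The main obstacle is the derivation of \eqref{eq:GRW2key}: keeping track of the Lorentzian sign conventions in the Codazzi term $\mathrm{div}\,A$ and in $\Ricb(N,\partial_t)$ is exactly what makes the right‑hand side collapse to $m\rho''\Theta$, the sign being pinned down by the slice and Killing checks above. The secondary difficulty is the capacity/cut‑off step in (ii): one must recognize the displayed integral condition as the parabolicity‑type hypothesis guaranteeing $\inf_\lambda\int_M\overline{g}(T,N)^2|\nabla\lambda|^2=0$, and then justify the passage to the limit (pointwise convergence of the optimal cut‑offs to $1$ and Fatou's lemma applied to the sign‑definite left‑hand side). The degenerate situation in (i), in which $M$ reduces to a single slice, should be recorded separately.
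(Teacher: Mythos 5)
Your key identity \eqref{eq:GRW2key} is correct: with $v=-\overline{g}(T,N)=-\theta$ it is exactly formula (\ref{lapvmax}) of the paper, $Lv=m\frac{\rho''(\tau)}{\rho(\tau)}v$. Part (ii) of your argument is also sound and reaches the same conclusion as the paper under the same hypothesis: you test the stability form with the cutoffs $\lambda v$ and drive $\int_M v^2|\nabla\lambda|^2$ to zero by the capacity computation with weight $\overline{g}(T,N)^2$, whereas the paper takes the positive solution $u>0$ of $Lu=0$ supplied by Lemma \ref{lemeq}, forms $\varphi=u/v$, observes that $\mathrm{div}(v^2\nabla\varphi)=-m\frac{\rho''(\tau)}{\rho(\tau)}v^2\varphi$, and invokes the parabolicity criterion for the weighted operator $\mathrm{div}(v^2\nabla\,\cdot\,)$ (Theorem 4.14 of \cite{AMR}); the displayed integral condition is precisely that criterion, so the two routes are equivalent. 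The first half of part (i) (the alternative $\rho''\geq 0$) is likewise fine.

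The gap is in part (i) for the alternative $\rho''\leq 0$ on $I'$. Testing stability with $\theta$ only produces the one-sided inequality $m\int_M\rho''(\tau)\rho(\tau)\,\overline{g}(N,\partial_t)^2\leq 0$, which carries no information when $\rho''\leq 0$, and your substitute argument via $\Delta(\rho\circ\tau)$ only shows $\rho'(\tau)\equiv 0$ and that $\psi(M)$ is a totally geodesic slice $\{t_0\}\times F$; it does not produce the stated conclusion $\rho''(t_0)=0$, and you leave this case open. The paper avoids the dichotomy entirely by getting an \emph{equality} instead of an inequality: with $u>0$ solving $Lu=0$ and $\varphi=u/v$ as above, the divergence theorem on the compact $M$ gives
\begin{equation*}
0=\int_M \mathrm{div}\bigl(v^2\nabla\varphi\bigr)=-m\int_M \frac{\rho''(\tau)}{\rho(\tau)}\,v^2\varphi ,
\end{equation*}
and since $v^2\varphi/\rho(\tau)>0$ this kills both one-signed alternatives at once. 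Note that the extra input here is genuinely stronger than the quadratic-form inequality you use: it is the existence of a positive solution of $Lu=0$ on the closed manifold $M$, which (by pairing with the first eigenfunction) forces $\lambda_1^L(M)=0$. Indeed, in the very configuration your argument lands on --- a compact slice with $\rho'(t_0)=0$ and $\rho''(t_0)<0$ --- one has $A=0$ and $\overline{\mathrm{Ric}}(N,N)=-m\rho''(t_0)/\rho(t_0)>0$, hence $\lambda_1^L(M)>0$ and the slice is stable, so stability alone cannot yield $\rho''(t_0)=0$ there; closing this case requires the full strength of the equivalence in Lemma \ref{lemeq} as the paper uses it, and this is precisely the step missing from your proposal.
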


In this work we also obtain new results for higher order mean curvatures. In particular, we study the $k$-stability of spacelike hypersurfaces with zero $(k+1)$-th mean curvature in spacetimes of constant curvature. The notion of $k$-stability has been previously studied in the Lorentzian setting in \cite{BC} as well as in \cite{CC}. The following two results are somehow companions of Theorems \ref{intro:teostab}, \ref{intro:teosup2} and \ref{intro:teoGRW1} in this context. Note that the requirements on the sign of the $k$-th mean curvature function $H_k$ and on the rank of the shape operator $A$ are minimal to guarantee the ellipticity of the $k$-stability operator of $\psi$, as defined in Section \ref{sekpre}.

\begin{teorintro}
	\label{intro:kmax1}
	Let $\psi: M \longrightarrow \overline{M}$ be a complete \tcred{oriented} spacelike hypersurface with zero $(k+1)$-th mean curvature, for some $1\leq k\leq m-2$, in a spacetime $\overline{M}$ of constant curvature $\overline{\kappa}$. Suppose that the $k$-th mean curvature function $H_k$ is positive and that the shape operator $A$ has rank $>k$ on $M$.
	\begin{itemize}[noitemsep,nolistsep]
		\item [i)] If $\overline{\kappa}\leq0$, then $M$ is non-compact and $\psi$ is $k$-stable.
		\item [ii)] If $\overline{\kappa}>0$ and $M$ is compact, then $\psi$ is not $k$-stable.
		\item [iii)] If $\overline{\kappa}>0$ and we assume that $M$ is non-compact and that, for some $o\in M$ and for some (hence any) $a\in\mathbb{R}$,
		\begin{align*}
			& \lim_{r\to+\infty} \int_a^r \left( \int_{\partial B_r} H_k \right)^{-1} = +\infty, \\
			& \lim_{r \longrightarrow + \infty} \int_{B_r} \left( \overline{\kappa} \ \mathrm{trace}(P_k) - \mathrm{trace}(A^2 P_k) \right) = + \infty,
		\end{align*}
		where $B_r$ is the geodesic ball of $(M,g)$ with radius $r$ centered at $o$ and $P_k$ is the $k$-th Newton operator associated to $\psi$ (as defined in Section \ref{sekpre}), then for every compact subset $K \subseteq M$ the hypersurface $\psi|_{M\setminus K} : M\setminus K \longrightarrow \overline{M}$ is not $k$-stable.
	\end{itemize}
\end{teorintro}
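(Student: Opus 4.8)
The three cases run parallel to Theorem~\ref{intro:teocsc}, and the engine is the second variation formula of Section~\ref{sekpre}, which for a compactly supported $u$ takes the form
\[
 Q_k(u) = \int_M \Big[ \langle P_k\nabla u,\nabla u\rangle - V_k\, u^2 \Big], \qquad V_k = \overline{\kappa}\,\mathrm{trace}(P_k) - \mathrm{trace}(A^2P_k),
\]
with $\psi$ being $k$-stable on a domain exactly when $Q_k\ge 0$ there. The hypotheses $H_k>0$ and $\mathrm{rank}\,A>k$ make $P_k$ positive definite (so $L_k=\mathrm{div}(P_k\nabla\,\cdot\,)$ is elliptic) and yield $\mathrm{trace}(P_k)=(m-k)S_k>0$ and $\mathrm{trace}(A^2P_k)>0$ pointwise. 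Writing $\mathcal{L}_k=L_k+V_k$, so that $Q_k(u)=-\int_M u\,\mathcal{L}_k u$, the decisive input is the following pair of identities. Since $\overline{M}$ has constant curvature it carries, at least along $\psi(M)$, a closed conformal field $V$ with $\overline{\nabla}_XV=\phi\,X$ for a concircular $\phi$; using $\mathrm{div}(P_k)=0=\mathrm{div}(P_{k+1})$ in a space form, $P_kA=-P_{k+1}$ (because $S_{k+1}=0$), and $\mathrm{trace}(P_jA)=(j+1)S_{j+1}$, one computes
\[
 \mathrm{div}\big(P_k V^{\top}\big) = \phi\,\mathrm{trace}(P_k), \qquad \mathcal{L}_k h = \overline{\kappa}\,\mathrm{trace}(P_k)\,h \quad\text{for } h=\langle V,N\rangle .
\]
Choosing $V$ timelike along $M$ (e.g.\ $V=\rho\,\partial_t$ in a GRW model of the space form) makes $h$ nowhere zero, since $V$ and $N$ are both timelike.

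For Case i) with $\overline{\kappa}\le 0$ one has $V_k\le 0$ pointwise, whence $Q_k(u)\ge\int_M\langle P_k\nabla u,\nabla u\rangle\ge 0$ and $\psi$ is $k$-stable. Non-compactness I would prove by contradiction: when $\overline{\kappa}\le 0$ there is a closed conformal field whose factor $\phi$ is strictly positive along $\psi(M)$ (the position field, $\phi\equiv 1$, for $\overline{\kappa}=0$; a convex concircular function for $\overline{\kappa}<0$, of fixed sign on the compact set $\psi(M)$), so if $M$ were compact the first identity would give $0=\int_M\phi\,\mathrm{trace}(P_k)>0$. For Case ii), with $\overline{\kappa}>0$ and $M$ compact, I test with $u=h$: by the second identity $Q_k(h)=-\overline{\kappa}\int_M\mathrm{trace}(P_k)\,h^2<0$, because $\overline{\kappa}>0$, $\mathrm{trace}(P_k)>0$ and $h$ is nowhere zero; hence $\psi$ is not $k$-stable.

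Case iii) is where the two integral conditions enter. After a sign choice $h>0$ is a positive solution of $\mathcal{L}_k h=\overline{\kappa}\,\mathrm{trace}(P_k)\,h$ with strictly positive right-hand side, so on spectral grounds $\mathcal{L}_k$ is ``unstable'' on every exterior domain; the task is to realize this with test functions supported in $M\setminus K$. I would run a Fischer--Colbrie--Schoen type capacity argument: for a cutoff $\phi$ supported in $M\setminus K$,
\[
 Q_k(\phi)=\int_M\langle P_k\nabla\phi,\nabla\phi\rangle-\int_M V_k\,\phi^2 .
\]
Condition (2) forces $\int_{B_r}V_k\to+\infty$ (and, since $\mathrm{trace}(A^2P_k)>0$, also $\int\mathrm{trace}(P_k)=+\infty$), so the potential term diverges once $\phi\equiv 1$ on a large ball minus $K$; condition (1), in which $\mathrm{trace}(P_k)\asymp H_k$, is exactly the $P_k$-parabolicity (vanishing capacity at infinity for $L_k$), which lets one close off $\phi$ on a far annulus with $\int\langle P_k\nabla\phi,\nabla\phi\rangle$ arbitrarily small. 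Choosing the annulus through the associated capacity potential then gives $Q_k(\phi)<0$; as removing the fixed compact $K$ changes $\int V_k$ only by a finite amount, the construction survives on $M\setminus K$ for every $K$.

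The main obstacle is the balancing in Case iii). The operator $L_k$ is elliptic but may degenerate at infinity and the potential $V_k$ is sign-indefinite, so the divergence $\int_{B_r}V_k\to+\infty$ coming from (2) must be shown to dominate not only the cutoff gradient energy (controlled by (1)) but also the uncontrolled sign of $V_k$ on the transition annulus; the delicate step is to verify that (1) is precisely the weighted parabolicity required and that the annular contributions are negligible against $\int_{B_r}V_k$. A secondary, minor point is ensuring, in Case i) with $\overline{\kappa}<0$, that the concircular factor $\phi$ keeps a fixed sign along the compact image $\psi(M)$, which relies on connectedness and the location of $\psi(M)$.
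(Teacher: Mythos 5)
Your cases i) and ii) are essentially sound and close in spirit to the paper's proof (Theorem~\ref{teoks}). Two remarks there. For non-compactness in i) the paper does not need any closed conformal field: since $k$-stability has just been established, Lemma~\ref{lemeq} provides a positive $u$ with $L_k u=(\mathrm{trace}(A^2P_k)-\overline{\kappa}\,\mathrm{trace}(P_k))u$, and integrating the divergence form $L_ku=\mathrm{div}(P_k(\nabla u))$ over a compact $M$ forces the strictly positive right-hand side to have zero integral. This sidesteps exactly the point you flag as ``secondary'': for $\overline{\kappa}<0$ the existence of a concircular factor of fixed sign along $\psi(M)$ is not automatic. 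In ii), testing directly with $h=\overline{g}(V,N)$ is a legitimate shortcut (the paper reaches the same conclusion via the quotient $\varphi=u/v$), but it presupposes a globally defined timelike closed conformal field along $\psi$; since $\overline{M}$ is only assumed to have constant curvature, the paper obtains $v$ by lifting $\psi$ to the universal Lorentzian covering (de Sitter space written as $\mathbb{R}\times_{\cosh(\sqrt{\overline{\kappa}}\,t)}\mathbb{S}^m(\overline{\kappa})$), which is why the body statement adds the hypothesis that $M$ be simply connected. Your write-up silently assumes this field exists on $M$.

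The genuine gap is in case iii), and you have correctly located it yourself: with a cutoff that is $1$ on $B_R\setminus K'$ and is closed off by the $P_k$-capacity potential on an outer annulus, condition (1) does make the gradient energy of the outer transition arbitrarily small, but the potential $V_k=\overline{\kappa}\,\mathrm{trace}(P_k)-\mathrm{trace}(A^2P_k)$ is sign-indefinite and nothing in (2) prevents $\int_{B_{R'}\setminus B_R}V_k\phi^2$ from being very negative on the transition region; the divergence of $\int_{B_r}V_k$ alone does not dominate it. As written, the argument does not close. The paper's resolution is oscillation theory rather than capacity: set $v_k(r)=\int_{\partial B_r}c_kH_k$ and let $A(r)$ be the $v_k$-average of $V_k$ over $\partial B_r$; conditions (1) and (2) say precisely that $v_k^{-1}\notin L^1(+\infty)$ and $\int_0^rA(s)v_k(s)\,ds\to+\infty$, which by Corollary 2.9 of \cite{MMR} force every solution of $(v_kz')'+Av_kz=0$ to be oscillatory. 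Taking two consecutive zeros $R_1<R_2$ beyond any prescribed compact set and the radial test function $\varphi=z(r(x))$, the coarea formula together with $g(P_k(\nabla\varphi),\nabla\varphi)\leq\mathrm{trace}(P_k)|\nabla\varphi|^2$ makes the gradient term and the potential term cancel exactly (they both reduce to $\int_{R_1}^{R_2}v_k(z')^2=\int_{R_1}^{R_2}Av_kz^2$ via the ODE), and the strict monotonicity of Proposition~\ref{propo2} then yields $\lambda_1^{\widetilde{L}_k}(M\setminus\overline{\Omega})<0$. This exact cancellation on an annulus between consecutive zeros is the missing idea; it is the standard device for sign-indefinite potentials, and your proposal needs it (or an equivalent, e.g.\ choosing $R$ at an argmin of $r\mapsto\int_{B_r}V_k$ so that the tail of the potential integral is nonnegative against a decreasing capacity profile) to go through.
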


\noindent We remark that the quantity $\overline{\kappa} \ \mathrm{trace}(P_k) - \mathrm{trace}(A^2 P_k)$ appearing in the statement of Theorem \ref{intro:kmax1} can be expressed in terms of the higher order mean curvature functions $H_1,\dots,H_m$ of $\psi$ as
\begin{equation}
	\label{intro:kmaxrem}
	\begin{split}\overline{\kappa} \ \mathrm{trace}(P_k) - \mathrm{trace}(A^2 P_k) & = \binom{m}{k}(m-k)\overline{\kappa}H_k \\
	& \phantom{= \;} + \binom{m}{k+1}\left((m-k-1)H_{k+2} - m H_1H_{k+1}\right).
	\end{split}
\end{equation}

\begin{teorintro}
	\label{intro:kmax2}
	Let $\overline{M}= I \times_\rho F$ be a Robertson-Walker spacetime of constant sectional curvature and let $\psi: M \longrightarrow \overline{M}$ be a complete \tcred{oriented} spacelike hypersurface with zero $(k+1)$-th mean curvature. Suppose that there exists $C>0$ such that $C^{-1} < H_k < C$, that $\mathrm{rank}(A) > k$ on $M$ and that for some $o\in M$ and for some (hence any) $a\in\mathbb{R}$, one of the following conditions is satisfied:
	\begin{alignat*}{4}
		(i) & \qquad \lim_{r\to+\infty} \int_a^r \frac{dt}{\mathrm{Vol}(\partial B_r)} = +\infty && \qquad \text{and} \qquad \mathrm{Vol}(M) = +\infty, && \qquad \text{or} \\
		(ii) & \qquad \lim_{r\to+\infty} \int_a^r \frac{dt}{\mathrm{Vol}(\partial B_r)} < +\infty && \qquad \text{and} \qquad \lim_{r\to+\infty} \frac{\log(\mathrm{Vol}(\partial B_r))}{r} = 0,
	\end{alignat*}
	where $B_r$ is the geodesic ball of $(M,g)$ with radius $r$ centered at $o$. If $\psi(M)$ is contained in a slab $[a, b] \times F$, then there exists $t_0\in[a,b]$ such that $\rho'(t_0) = 0$ and $\psi(M)$ must intersect the spacelike slice $\{t_0\}\times F$.
\end{teorintro}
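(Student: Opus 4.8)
The plan is to reduce the statement to a maximum principle for a suitable function of the height, in the spirit of the proof of Theorem \ref{intro:teoGRW1}, replacing the Laplacian by the second-order operator $L_k$ attached to $P_k$. Write $\overline{M}=I\times_\rho F$ and let $h=t\circ\psi$ be the height function, so that $\psi(M)\subseteq[a,b]\times F$ gives $h(M)\subseteq[a,b]$. Choose a primitive $\sigma$ of $\rho$ (so $\sigma'=\rho>0$) and set $\mathfrak{h}=\sigma\circ h$; since $\sigma$ is strictly increasing, $\mathfrak{h}$ is bounded and its extrema and level sets correspond to those of $h$. The hypotheses $H_k>0$ and $\mathrm{rank}(A)>k$ with $H_{k+1}=0$ ensure (cf. Section \ref{sekpre}) that $P_k$ is positive definite, so $L_k=\mathrm{trace}(P_k\circ\mathrm{Hess})$ is elliptic; moreover $\mathrm{trace}(P_k)=b_k H_k$ with $b_k=(m-k)\binom{m}{k}$, which by $C^{-1}<H_k<C$ is pinched between positive constants. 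Together with the bound $P_k\le\mathrm{trace}(P_k)\,\mathrm{Id}$ coming from positivity, this is what should make $L_k$ uniformly elliptic and comparable to $\Delta$, so that the hypotheses (i)--(ii), stated through the geodesic balls of $(M,g)$, can be applied to $L_k$. Finally, since $\overline{M}$ has constant curvature, $P_k$ is divergence-free and $L_k u=\mathrm{div}(P_k\nabla u)$.

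The key step is the computation of $L_k\mathfrak{h}$. I would use that $K=\rho(t)\partial_t$ is a closed conformal field with $\overline{\nabla}_X K=\rho'(t)X$ for all $X$, and that $\nabla\mathfrak{h}=-K^{\top}$; a direct computation then gives
\begin{equation*}
  \mathrm{Hess}\,\mathfrak{h}=-\rho'(h)\,g+\rho(h)\,\Theta\,\langle A\,\cdot\,,\,\cdot\,\rangle,\qquad \Theta=\overline{g}(N,\partial_t).
\end{equation*}
Tracing against $P_k$ and using $\mathrm{trace}(P_k)=b_k H_k$ and $\mathrm{trace}(AP_k)=b_k H_{k+1}=0$ yields
\begin{equation*}
  L_k\mathfrak{h}=-b_k\,\rho'(h)\,H_k .
\end{equation*}
Thus $L_k\mathfrak{h}$ carries, pointwise, the sign of $-\rho'(h)$ (recall $b_k,H_k>0$) and vanishes exactly where $\rho'(h)=0$.

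It remains to run the appropriate maximum principle. Under (i), the integral criterion (together with $\mathrm{Vol}(M)=+\infty$) makes $(M,g)$ parabolic, hence also the uniformly elliptic operator $L_k$, so that every bounded $L_k$-subharmonic (resp. superharmonic) function is constant. Were $\rho'(h)$ of one strict sign on $M$, say $\rho'(h)>0$ everywhere, then $\mathfrak{h}$ would be a non-constant bounded $L_k$-superharmonic function, a contradiction; hence $\rho'\circ h$ takes values $\le0$ and values $\ge0$, and since $M$ is connected it vanishes at some $p_0\in M$, giving $\psi(p_0)\in\{t_0\}\times F$ with $t_0=h(p_0)$ and $\rho'(t_0)=0$. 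Under (ii), the subexponential growth $\log\mathrm{Vol}(\partial B_r)/r\to0$ yields the weak (Omori--Yau type) maximum principle for $L_k$; applying it at $\sup\mathfrak{h}$ and $\inf\mathfrak{h}$, and using $H_k\ge C^{-1}$ together with the continuity of $\rho'$, gives $\rho'(t^{\ast})\ge0$ and $\rho'(t_{\ast})\le0$, where $t^{\ast}=\sup_M h$ and $t_{\ast}=\inf_M h$. In either case $\rho'$ changes sign across $[t_{\ast},t^{\ast}]\subseteq[a,b]$, so there is $t_0$ with $\rho'(t_0)=0$; since $h(M)$ is a connected interval with endpoints $t_{\ast},t^{\ast}$, one checks $t_0\in h(M)$ and obtains the desired intersection.

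The main obstacle I anticipate is the analytic calibration of the two regimes: justifying that the maximum principles associated with (i) and (ii) --- classically formulated for $\Delta$ --- transfer to the weighted operator $L_k$ with the balls of $(M,g)$ as the correct reference geometry. This hinges on the uniform ellipticity of $L_k$, i.e.\ on a lower bound for the smallest eigenvalue of $P_k$; the upper bound is immediate from $P_k\le\mathrm{trace}(P_k)\,\mathrm{Id}$ and the pinching of $H_k$, but the lower bound is the delicate point and must be extracted from $C^{-1}<H_k<C$, $H_{k+1}=0$ and $\mathrm{rank}(A)>k$ through the algebra of the G\aa rding cone. A secondary, more routine difficulty is the boundary bookkeeping in case (ii), where $t_{\ast}$ and $t^{\ast}$ need not be attained and one must ensure that the sign change of $\rho'$ produces a zero genuinely inside the range $h(M)$.
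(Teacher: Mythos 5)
Your computation of $L_k\mathfrak h$ is correct and coincides with the identity $L_k\eta=-c_kH_k\rho'(\tau)+c_kH_{k+1}v$ that the paper establishes in Lemma \ref{comp2}, so the first half of your plan matches the paper's. The gap lies in the analytic machinery you invoke afterwards. You propose to transfer parabolicity (case (i)) and a weak Omori--Yau maximum principle (case (ii)) from $\Delta$ to $L_k=\mathrm{div}(P_k\nabla\,\cdot\,)$ via \emph{uniform} ellipticity of $L_k$, and you correctly flag the lower bound on the smallest eigenvalue of $P_k$ as the delicate point. That bound, however, cannot be extracted from $C^{-1}<H_k<C$, $H_{k+1}\equiv0$ and $\mathrm{rank}(A)>k$: these hypotheses give only pointwise positive definiteness. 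Already for $m=3$, $k=1$, taking $\kappa_1=-3H_1+\epsilon$ and $\kappa_2,\kappa_3$ the roots of $\lambda^2+\epsilon\lambda+(\epsilon^2-3H_1\epsilon)=0$ yields $H_2=0$, $\mathrm{rank}(A)=3$ and prescribed $H_1$, while the eigenvalue $3H_1+\kappa_1=\epsilon$ of $P_1$ is arbitrarily small; so $P_k$ may degenerate along a divergent sequence and the purely algebraic derivation you describe ("through the algebra of the G\aa rding cone") cannot close the argument. Until the maximum principles you use are reformulated so as to require only an upper bound on $P_k$, the second half of the proof does not go through.

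The paper's proof of Theorem \ref{kmax2} is designed precisely to avoid this issue: it uses only the upper bound $g(P_kX,X)\le\mathrm{trace}(P_k)|X|^2=c_kH_k|X|^2$. Conditions (i)/(ii) are fed into the oscillation theory of \cite{BMR} for the ODE $(wz')'+\varepsilon wz=0$ with weight $w(r)=\int_{\partial B_r}c_kH_k$ (comparable to $\mathrm{Vol}(\partial B_r)$ by the pinching of $H_k$), and the resulting radial test functions, combined with the strict domain monotonicity of Proposition \ref{propo2}, give $\lambda_1^{L_k}(M)=0$. The generalized Barta inequality (Proposition \ref{barta}), valid for any pointwise positive definite $P$, applied to $\eta+\varepsilon$ and $\overline\eta(b)+\varepsilon-\eta$ then yields $\inf_M|H_k\rho'(\tau)|=0$, hence a zero $t_0$ of $\rho'$ in the closure of $\tau(M)$. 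Your secondary worry about the "boundary bookkeeping" is also substantive: even the paper's internal statement only places $t_0$ in $\overline{\tau(M)}$, so concluding that $\psi(M)$ actually meets the slice $\{t_0\}\times F$ requires an additional argument excluding that $t_0$ is a non-attained endpoint of the interval $\tau(M)$, which neither your proposal nor the paper's proof supplies.
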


\bigskip

Our paper is organized as follows. In Section \ref{sepre} we recall the notion of stability and $k$-stability for spacelike hypersurfaces with vanishing mean curvature functions together with some general properties of Schr\"odinger differential operators that we will need in the subsequent sections. In Section 3 we mainly deal with maximal hypersurfaces in locally symmetric spacetimes and we prove Theorems \ref{intro:teostab}, \ref{intro:teocsc}, \ref{intro:teosup1} and \ref{intro:teosup2} above (see Theorems \ref{teocpt}, \ref{teocsc}, \ref{teosup1} and \ref{teoindex1}, respectively). In Section \ref{segrw} we study maximal hypersurfaces in GRW spacetimes and we prove Theorems \ref{intro:teosup3}, \ref{intro:teoGRW1}, \ref{intro:teoGRW2} and Corollary \ref{intro:corodS}  (see Theorems \ref{teosup3}, \ref{teoGRW2} and Corollaries \ref{teoGRW1}, \ref{corodS}), also giving a characterization of GRW spacetimes with spacelike sectional curvatures bounded below (Lemma \ref{charsectbound}). In section \ref{sho} we consider hypersurfaces with zero $(k+1)$-th order mean curvature in spacetimes of constant curvature and we prove Theorems \ref{intro:kmax1} and \ref{intro:kmax2} (see Theorems \ref{teoks} and \ref{kmax2}).

\section{Preliminaries} \label{sepre}

We devote this section to introduce the basic concepts concerning the stability of maximal hypersurfaces in general ambient spacetimes as well as their natural generalization to the case of $k$-stable spacelike hypersurfaces with zero $(k+1)$-th mean curvature.

\subsection{Stability of maximal hypersurfaces}

\label{sesta}

Let $\psi: M \longrightarrow \overline{M}$ be a spacelike hypersurface immersed in a spacetime $(\overline{M}, \overline{g})$ and let $g=\psi^{\ast}\overline{g}$ be the Riemannian metric induced on $M$. We will denote by $N$ a chosen unit normal vector field to $M$ and by $A$ the shape operator in the direction of $N$, determined by the validity of the Weingarten formula $AX = -\overline{\nabla}_X N$ for any $X\in TM$, with $\overline{\nabla}$ the Levi-Civita connection of $(\overline{M},\overline{g})$. The linear operator $A$ is self-adjoint at each tangent space and its eigenvalues $\kappa_1, \dots, \kappa_m$ are, by definition, the principal curvatures of the hypersurface. The mean curvature function $H$ of $\psi$ in the direction of $N$ is given by the normalized trace of $-A$, that is,
\begin{equation}
	\label{Hf}
	\tcr{H = -\frac{1}{m}\mathrm{trace}(A) = -\frac{\kappa_1 + \cdots + \kappa_m}{m}.}
\end{equation}

A spacelike hypersurface is called a maximal hypersurface if it is a critical point of the volume functional for compactly supported normal variations of the immersion. This condition is equivalent to the hypersurface having zero mean curvature. If $\psi$ is a maximal hypersurface and we take a normal variation driven by a variational vector field $\phi N$, with $\phi$ a smooth function supported in a relatively compact domain $\Omega\subseteq M$, then the second variation of the volume of $\Omega$ is (see Theorem 2.1 of \cite{BF} and Theorem 1 of \cite{F})
\begin{equation}
	\label{2var}
	\int_{\Omega} \left( \Delta \phi - \left( \overline{\mathrm{Ric}}(N, N) + \mathrm{trace}(A^2) \right) \phi \right) \phi \ dV,
\end{equation}
where $\nabla$ is the Levi-Civita connection of $(M,g)$, $\Delta$ is the corresponding Laplacian and $\overline{\mathrm{Ric}}$ is the Ricci tensor of $(\overline{M},\overline{g})$. In view of the identity
\begin{equation}
	\label{scalmax}
	\mathrm{S} = \overline{\mathrm{S}} + 2\overline{\mathrm{Ric}}(N,N) + \mathrm{trace}(A^2)
\end{equation}
satisfied by the scalar curvatures $\mathrm{S}$, $\overline{\mathrm{S}}$ of $M$ and $\overline{M}$ (see formula (\ref{scal}) in Section \ref{gausseq}), we have
\begin{equation}
	\label{stab}
	\overline{\mathrm{Ric}}(N, N) + \mathrm{trace}(A^2) = \mathrm{S} - \left( \overline{\mathrm{S}} + \overline{\mathrm{Ric}}(N,N) \right) = \sum_{i=1}^m\left(\mathrm{Ric}(E_i,E_i)-\overline{\mathrm{Ric}}(E_i,E_i)\right)
\end{equation}
for any local orthonormal frame $\{E_1,\dots,E_m\}$ on $TM$, where $\mathrm{Ric}$ is the Ricci tensor of $(M,g)$. The immersion $\psi$ is said to be stable if the second variation of the volume of $M$ is non-positive for every compactly supported normal variation. Stability is detected by the sign of the first eigenvalue $\lambda_1^L(M)$ of the stability operator $L$ defined by
\begin{equation}
	\label{lu}
	L u = \Delta u - \left( \overline{\mathrm{Ric}}(N,N) + \mathrm{trace}(A^2) \right) u \qquad \text{for every } u\in C^2(M),
\end{equation}
analogously to what happens in the Riemannian case for minimal hypersurfaces (see for instance \cite{FS, JS}). In fact, since $\lambda_1^L(M)$ is variationally characterized by
\begin{equation}
	\label{lambdastab}
	\lambda_1^L(M) = \inf_{\substack{\phi \in C_c^\infty (M) \\ \ \phi \neq 0}} \frac{\int_M |\nabla\phi|^2 + \left( \overline{\mathrm{Ric}}(N,N) + \mathrm{trace}(A^2) \right) \phi^2}{\int_M \phi^2 },
\end{equation}
by applying the divergence theorem to (\ref{2var}) we see that $\psi$ is stable if and only if $\lambda_1^L(M)\geq0$. More generally, $M$ is said to have finite index if the operator $L$ has finite index. When $M$ is compact, this is always the case, while for a complete, non-compact hypersurface this happens if and only if there exists a relatively compact open set $\Omega\subseteq M$ such that the second variation of the volume of $M$ is non-positive for every normal variation compactly supported in $M\setminus\overline{\Omega}$. In this case, we also say that $M$ is stable at infinity.

\subsection{$k$-stability of spacelike hypersurfaces with zero $(k+1)$-th mean curvature}

\label{sekpre}

We can generalize the concepts above to study the $k$-stability of spacelike hypersurfaces with zero $(k+1)$-th mean curvature in spacetimes of constant curvature. In order to do so, let $\psi: M \longrightarrow \overline{M}$ be a spacelike hypersurface in a spacetime $\overline{M}$ with constant curvature $\overline{\kappa}$ and let $A$ be the shape operator of the immersion with respect to a unit normal timelike vector $N$. We associate to $A$ the algebraic invariants $S_1,\dots,S_m$ and the mean curvature functions $H_1,\dots,H_m$ of orders $1,\dots,m$ in the direction of $N$ by setting
\begin{equation}
	\label{Ak}
	S_k = \sum_{1\leq i_1<\cdots<i_k\leq m} \kappa_{i_1}\cdots \kappa_{i_m}, \qquad H_k = \frac{(-1)^k}{\binom{m}{k}}S_k \qquad \text{for } 1\leq k\leq m.
\end{equation}
The Newton tensors $P_k: TM \longrightarrow TM$, $0\leq k\leq m$, are inductively defined by
\begin{equation}
	\label{newton}
	P_0 = \mathbb{I} \qquad \text{and} \qquad \ P_k = \binom{m}{k} H_k \mathbb{I} + A \circ P_{k-1} \qquad \text{for } 1\leq k\leq m,
\end{equation}
where $\mathbb{I}$ denotes the identity on $TM$, and they satisfy the following identities, proved in \cite{ABC}.

\begin{lema}
	\label{newtop}
	For $0\leq k\leq m-1$, let $c_k=(m-k)\binom{m}{k}$. Then, for every $X\in\mathfrak X(M)$
	\begin{equation}
		\label{eq:newtop}
		\begin{split}
		& \mathrm{trace}(P_k) = c_k H_k, \quad \mathrm{trace}(P_k\circ A) = -c_k H_{k+1}, \\
		& \mathrm{trace}(P_k\circ\nabla_X A) = -\binom{m}{k+1} \nabla_X H_{k+1}.
		\end{split}
	\end{equation}
\end{lema}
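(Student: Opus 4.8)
The plan is to reduce all three identities to elementary algebraic facts about the elementary symmetric functions $S_1,\dots,S_m$ of the principal curvatures $\kappa_1,\dots,\kappa_m$, working pointwise in an orthonormal frame $\{e_1,\dots,e_m\}$ that diagonalizes the self-adjoint shape operator $A$, say $Ae_i=\kappa_i e_i$ at the chosen point. The first and decisive step is to obtain a closed form for the eigenvalues of the Newton tensors: I claim $P_ke_i=(-1)^kS_k^{(i)}e_i$, where $S_k^{(i)}$ denotes the $k$-th elementary symmetric function of $\{\kappa_j : j\neq i\}$. This follows by induction on $k$ from the recursion \eqref{newton}, which by \eqref{Ak} reads $P_k=(-1)^kS_k\,\mathbb I+A\circ P_{k-1}$, together with the splitting identity $S_k=S_k^{(i)}+\kappa_iS_{k-1}^{(i)}$ obtained by separating the monomials of $S_k$ according to whether $\kappa_i$ occurs as a factor. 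In particular every $P_k$ commutes with $A$ and is diagonal in the same frame.

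Granting the eigenvalue formula, the first two identities become pure counting. Summing eigenvalues gives $\mathrm{trace}(P_k)=(-1)^k\sum_iS_k^{(i)}$ and $\mathrm{trace}(P_k\circ A)=(-1)^k\sum_i\kappa_iS_k^{(i)}$, and I would use the combinatorial identities $\sum_iS_k^{(i)}=(m-k)S_k$ (each degree-$k$ monomial of $S_k$ omits exactly $m-k$ of the indices) and $\sum_i\kappa_iS_k^{(i)}=(k+1)S_{k+1}$ (from $\kappa_iS_k^{(i)}=S_{k+1}-S_{k+1}^{(i)}$ and $\sum_iS_{k+1}^{(i)}=(m-k-1)S_{k+1}$). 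Converting back through $H_k=(-1)^kS_k/\binom{m}{k}$ and the binomial identity $(m-k)\binom{m}{k}=(k+1)\binom{m}{k+1}=c_k$ then yields $\mathrm{trace}(P_k)=c_kH_k$ and $\mathrm{trace}(P_k\circ A)=-c_kH_{k+1}$.

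For the third identity the delicate point is that one cannot differentiate the $\kappa_i$ directly, since the spectral data of $A$ need not vary smoothly where multiplicities jump. Instead I would regard $S_{k+1}$ as a smooth $\mathrm{O}(m)$-invariant polynomial in the operator $A$; since the metric is parallel, $\nabla_XS_{k+1}$ equals the differential of this polynomial applied to $\nabla_XA$, which may be computed at the point in the diagonalizing frame. Writing $S_{k+1}$ as the sum of the $(k+1)\times(k+1)$ principal minors of $A$ and differentiating each minor by the adjugate formula $\frac{d}{dt}\det(D+tB)\big|_{0}=\mathrm{trace}(\mathrm{adj}(D)B)$, with $D=A$ diagonal, produces the first-variation formula $\nabla_XS_{k+1}=\sum_iS_k^{(i)}\langle(\nabla_XA)e_i,e_i\rangle$. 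On the other hand the eigenvalue formula gives $\mathrm{trace}(P_k\circ\nabla_XA)=(-1)^k\sum_iS_k^{(i)}\langle(\nabla_XA)e_i,e_i\rangle$, so $\mathrm{trace}(P_k\circ\nabla_XA)=(-1)^k\nabla_XS_{k+1}=-\binom{m}{k+1}\nabla_XH_{k+1}$, as claimed. I expect this last step to be the main obstacle: justifying the matrix first-variation formula without assuming smooth dependence of the eigenvalues, and keeping the sign bookkeeping induced by the timelike-normal convention $H_k=(-1)^kS_k/\binom{m}{k}$ consistent throughout.
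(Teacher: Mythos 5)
Your argument is correct. Note that the paper does not prove this lemma at all: it simply states that the identities are ``proved in [ABC]'' (Al\'ias--Brasil--Colares), so you have supplied the proof the paper omits, and your route is precisely the standard one used there and throughout the literature. The key eigenvalue formula $P_k e_i=(-1)^k S_k^{(i)}e_i$ follows correctly by induction from the recursion $P_k=(-1)^kS_k\,\mathbb I+A\circ P_{k-1}$ and the splitting $S_k=S_k^{(i)}+\kappa_i S_{k-1}^{(i)}$; the counting identities $\sum_i S_k^{(i)}=(m-k)S_k$ and $\sum_i\kappa_i S_k^{(i)}=(k+1)S_{k+1}$, together with $(m-k)\binom{m}{k}=(k+1)\binom{m}{k+1}$, give the first two traces with the right signs. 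For the third identity your caution about non-smooth eigenvalues is well placed and correctly resolved: since $S_{k+1}=\sigma_{k+1}(A)$ is a universal polynomial in $\operatorname{trace}(A),\dots,\operatorname{trace}(A^{k+1})$ and the metric is parallel, the chain rule reduces $\nabla_X S_{k+1}$ to the differential of $\sigma_{k+1}$ applied to $\nabla_X A$, and the adjugate/minor computation in the pointwise diagonalizing frame yields $\nabla_X S_{k+1}=\sum_i S_k^{(i)}\,g((\nabla_XA)e_i,e_i)=(-1)^k\operatorname{trace}(P_k\circ\nabla_XA)$, which is the claimed identity after converting $S_{k+1}$ to $H_{k+1}$. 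I find no gap; the only cosmetic remark is that for $k=0$ one should adopt the convention $H_0=1$ (not covered by the paper's definition (\ref{Ak})) for the first identity to read $\operatorname{trace}(P_0)=c_0H_0=m$.
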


\noindent Note that the identity (\ref{intro:kmaxrem}) stated in the Introduction follows by applying relations (\ref{newton}) and (\ref{eq:newtop}). For $0\leq k\leq m$, let $L_k$ be the second order linear differential operator given by
\begin{equation}
	\label{Lkdef}
	L_k u = \mathrm{trace} \left( P_k \circ \mathrm{Hess}(u) \right) \equiv \mathrm{div}(P_k(\nabla u)) \qquad\text{for every } u\in C^2(M),
\end{equation}
where the second equality holds because $P_k$ is divergence-free as long as $\overline{M}$ has constant curvature. Note that $H_1=H$ and that $L_0=\Delta$. The operator $L_k$ is elliptic if and only if $P_k$ is positive definite. For $1\leq k\leq m-2$, if $H_{k+1}=0$ at some point then $P_k$ is positive definite there if and only if $H_k>0$ and $\mathrm{rank}(A)>k$, see Corollary 2.3 of \cite{HL}.

Now, for $0\leq k\leq m-1$ we can define the $k$-volume functional for a relatively compact domain $\Omega \subset M$ by setting
$$
	\mathrm{Vol}_k(\Omega) = \int_\Omega F_k( S_1, S_2, \dots , S_k) dV,
$$
where $S_1,\dots,S_m$ are the invariants defined above and the functions $F_0,\dots, F_{m-1}$ are recursively defined by
$$
	F_0 = 1, \qquad F_1 = -S_1, \qquad F_k = (-1)^k S_k -\frac{\overline{\kappa}(m - k + 1)}{k-1} F_{k-2} \qquad \text{for } 2\leq k\leq m-1.
$$
According to \cite{CC}, if $M$ has zero $(k+1)$-th mean curvature for some $1\leq k\leq m-1$ and we take a normal variation of $M$ given by $\phi N$, with $\phi$ supported in a relatively compact domain $\Omega\subseteq M$, then the second variation of the $k$-volume functional of $M$ is
\begin{equation}
	\label{k2var}
	(k+1) \int_\Omega \left( L_k \phi + \overline{\kappa} \ \mathrm{trace}(P_k) \phi - \mathrm{trace}(A^2 P_k) \phi \right) \phi \ dV.
\end{equation}
As a generalization of (\ref{stab}), for every local orthonormal frame $\{E_1,\dots,E_m\}$ on $TM$ we have
$$
	- \overline{\kappa} \ \mathrm{trace}(P_k) + \mathrm{trace}(A^2 P_k) = \sum_{i=1}^m \left( \mathrm{Ric}(P_k E_i,E_i)-\overline{\mathrm{Ric}}(P_k E_i,E_i) \right),
$$
as a consequence of formula (\ref{Pscal}) of Section \ref{gausseq} and of Lemma \ref{newtop}. The immersion $\psi$ is said to be $k$-stable if the second variation of the $k$-volume of $M$ is non-positive for every compactly supported normal variation. Analogously to the maximal case, $k$-stability is detected by the sign of the first eigenvalue $\lambda_1^{\widetilde{L}_k} (M)$ of the $k$-stability operator $\widetilde{L}_k$ defined by
\begin{equation}
	\label{kst}
	\widetilde{L}_k u = L_k u - \left( \mathrm{trace}(A^2 P_k) - \overline{\kappa} \ \mathrm{trace}(P_k) \right) u \qquad \text{for every } u\in C^2(M).
\end{equation}
Since $\lambda_1^{\widetilde{L}_k} (M)$ is variationally characterized by
\begin{equation}
	\label{kst1}
	\lambda_1^{\widetilde{L}_k} (M) = \inf_{\substack{\phi \in C_c^\infty (M) \\ \ \phi \neq 0}} \frac{\int_M g(P_k(\nabla\phi),\nabla\phi) + \left( \mathrm{trace}(A^2 P_k) - \overline{\kappa} \ \mathrm{trace}(P_k) \right) \phi^2}{\int_M \phi^2 },
\end{equation}
we see that a spacelike hypersurface $M$ with zero $(k+1)$-th mean curvature is $k$-stable if and only if $\lambda_1^{\widetilde{L}_k} (M) \geq 0$. Similarly to what happens with the usual stability operator, a complete, non-compact, spacelike hypersurface with zero $(k+1)$-th mean curvature is said to be $k$-stable at infinity if the $k$-stability operator has finite index.

\subsection{General facts on Schr\"odinger operators}

\label{schrodinger}

Consider a Riemannian manifold $(M,g)$, a function $F(x)\in L^{\infty}_{\mathrm{loc}}(M)$ and let $P : TM \longrightarrow TM$ be a positive definite, self-adjoint, endomorphism of class $C^1$. Define the second order linear elliptic operator $L$ by setting
\begin{equation}
	\label{defL}
	Lu = \mathrm{div}(P(\nabla u)) - F(x)u \qquad \text{for every } u\in C^2(M).
\end{equation}
For every open set $\Omega \subseteq M$, let $\lambda_1^L(\Omega)$ be the first eigenvalue of $L$ on $\Omega$, given by
\begin{equation}
	\label{lambda1}
	\lambda_1^L(\Omega) = \inf_{\substack{\phi \in C_c^\infty (\Omega) \\ \ \phi \neq 0}} \frac{\int_\Omega g(P(\nabla\phi),\nabla\phi) + F(x) \phi^2}{\int_\Omega \phi^2 } = \inf_{\substack{\phi \in C_c^\infty (\Omega) \\ \ \phi \neq 0}} \frac{\int_\Omega -\phi L\phi}{\int_\Omega \phi^2 }
\end{equation}
If $\Omega$ is a relatively compact domain with sufficiently regular boundary, say of class $C^2$, then the infimum in the RHS of (\ref{lambda1}) is achieved by the non-zero solutions of the Dirichlet problem
\begin{alignat*}{2}
	Lu + \lambda_1^L(\Omega) u & = 0 && \quad \text{on } \Omega, \\
	u & = 0 && \quad \text{on } \partial\Omega,
\end{alignat*}
which belong to $C^{0,\alpha}(\overline{\Omega}) \cap H^2(\Omega) \cap C^1(\Omega)$ for some $0<\alpha<1$ (see Theorems 8.6, 8.12, 8.29 of \cite{GT} and Theorem 1 of \cite{T}).
For operators of the form (\ref{defL}), we have the next monotonicity property of the eigenvalues with respect to the domain.

\begin{prop}
	\label{propo2}
	Let $(M,g)$ be a Riemannian manifold and for $F(x) \in L^{\infty}_{\mathrm{loc}}(M)$, $P : TM \longrightarrow TM$ let $L$ be the operator defined in (\ref{defL}). Let $\Omega_1, \Omega_2$ be two relatively compact domains in $M$ such that $\Omega_1 \subseteq \Omega_2$. Then
	\begin{equation}
		\label{monotonicity}
		\lambda_1^L (\Omega_1) \geq \lambda_1^L (\Omega_2).
	\end{equation}
	If $\Omega_1$ and $\Omega_2$ have $C^2$ boundaries and the interior of $\Omega_2 \setminus \Omega_1$ is nonempty, then (\ref{monotonicity}) holds with strict inequality sign.
\end{prop}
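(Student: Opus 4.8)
The plan is to derive the non-strict inequality \eqref{monotonicity} directly from the variational characterization \eqref{lambda1}, and then to upgrade it to a strict inequality by a contradiction argument resting on the strict positivity of first eigenfunctions. For the first part I would use that extension by zero gives a linear inclusion of test function spaces: any $\phi\in C_c^\infty(\Omega_1)$, being compactly supported in $\Omega_1\subseteq\Omega_2$, determines a function in $C_c^\infty(\Omega_2)$ whose Rayleigh quotient in \eqref{lambda1} is unchanged, since all the integrands vanish identically outside $\Omega_1$. Hence the infimum defining $\lambda_1^L(\Omega_1)$ runs over a subfamily of the competitors for $\lambda_1^L(\Omega_2)$, and this immediately yields $\lambda_1^L(\Omega_1)\geq\lambda_1^L(\Omega_2)$.

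For the strict inequality I would argue by contradiction, assuming $\lambda_1^L(\Omega_1)=\lambda_1^L(\Omega_2)=:\lambda$. Since $\Omega_1$ has $C^2$ boundary, the infimum for $\lambda_1^L(\Omega_1)$ is attained by a solution $u_1$ of the Dirichlet problem displayed after \eqref{lambda1}; replacing $u_1$ by $|u_1|$, which leaves the Rayleigh quotient unchanged because $|\nabla|u_1||=|\nabla u_1|$ almost everywhere, I may assume $u_1\geq 0$. Its extension by zero $\tilde u_1$ lies in $H^1_0(\Omega_2)$ and, exactly as above, realizes the Rayleigh quotient $\lambda=\lambda_1^L(\Omega_2)$. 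Therefore $\tilde u_1$ is a minimizer on $\Omega_2$, and hence a nonnegative weak solution of $L\tilde u_1+\lambda\tilde u_1=0$ on the whole of $\Omega_2$.

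The concluding step is to rule this out. On the relatively compact set $\Omega_2$ the endomorphism $P$ is uniformly positive definite and $F\in L^\infty(\Omega_2)$, so $L+\lambda$ is uniformly elliptic with bounded coefficients and the Harnack inequality (in the De Giorgi--Nash--Moser form) applies to the nonnegative solution $\tilde u_1$: either $\tilde u_1>0$ throughout $\Omega_2$ or $\tilde u_1\equiv 0$. The first alternative is impossible because $\tilde u_1$ vanishes on the nonempty open set $\mathrm{int}(\Omega_2\setminus\Omega_1)$, while the second is impossible because $u_1$, being an eigenfunction, is not identically zero. This contradiction shows that the equality case cannot occur, whence $\lambda_1^L(\Omega_1)>\lambda_1^L(\Omega_2)$.

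The step I expect to require the most care is verifying that $\tilde u_1$ is a genuine weak solution of $L\tilde u_1+\lambda\tilde u_1=0$ across the interface $\partial\Omega_1$, with no spurious distributional contribution appearing there. This is exactly what is secured by $\tilde u_1$ being an honest minimizer of the Rayleigh functional on $\Omega_2$: as a minimizer it satisfies the associated Euler--Lagrange equation weakly on all of $\Omega_2$, and the interior elliptic regularity cited from \cite{GT, T} then places us precisely in the setting where the Harnack inequality is available and the positivity dichotomy can be invoked.
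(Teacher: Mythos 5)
Your argument is correct, but it follows a genuinely different route from the paper's. For the non-strict inequality both proofs are identical (extension by zero of test functions). For the strict inequality, you extend the first eigenfunction $u_1$ of $\Omega_1$ by zero, observe that under the equality assumption it becomes a minimizer of the Rayleigh quotient on $\Omega_2$ and hence a nonnegative weak solution of $Lu+\lambda u=0$ on all of $\Omega_2$, and then invoke the Harnack inequality (equivalently, the strong maximum principle) to conclude that a nonnegative solution vanishing on the nonempty open set $\mathrm{int}(\Omega_2\setminus\Omega_1)$ must vanish identically. The paper instead proves and uses an extended Picone identity: with $u$ the eigenfunction on $\Omega_1$ and $v>0$ the eigenfunction on $\Omega_2$, integrating
$0\leq g\bigl(P(\nabla u-\tfrac{u}{v}\nabla v),\nabla u-\tfrac{u}{v}\nabla v\bigr)$ over $\Omega_1$ yields $\bigl(\lambda_1^L(\Omega_1)-\lambda_1^L(\Omega_2)\bigr)\int_{\Omega_1}u^2\geq 0$, and in the equality case forces $u=Cv$ on a component of $\Omega_1$ whose boundary meets $\Omega_2$, contradicting $v>0$ there. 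The trade-off: your proof needs the eigenfunction only on $\Omega_1$ but leans on De Giorgi--Nash--Moser/Harnack for the divergence-form operator $\mathrm{div}(P\nabla\cdot)$ with an $L^\infty$ zero-order term (available here since $P$ is uniformly positive definite and $F$ is bounded on the relatively compact $\Omega_2$), whereas the paper needs both eigenfunctions and the strict positivity of $v$ on $\Omega_2$, but the Picone mechanism is purely integral and transfers verbatim to more general (e.g.\ quasilinear) operators. You correctly identified and resolved the one delicate point, namely that the zero extension satisfies the Euler--Lagrange equation weakly across $\partial\Omega_1$ precisely because it is an honest minimizer on $\Omega_2$; one should also note, as you implicitly do, that $\Omega_2$ being a \emph{domain} (connected) is what makes the Harnack dichotomy global.
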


\begin{demo}
	Observe that (\ref{monotonicity}) is a trivial consequence of the definition (\ref{lambda1}). To prove the last statement we will proceed as follows. Consider an open subset $\Omega\subseteq M$ and two functions $u, v \in C^1(\Omega)$, with $v \neq 0$ on $\Omega$: since $P$ is \tcr{positive definite} and self-adjoint, 
	a direct computation yields the following extension of the classic Picone's identity
	\begin{equation}
		\label{picone}
		0 \leq g\left( P\left( \nabla u - \frac{u}{v} \nabla v \right), \nabla u - \frac{u}{v} \nabla v \right) =  g(P(\nabla u),\nabla u) - g \left( \nabla \left(\frac{u^2}{v} \right), P (\nabla v) \right).
	\end{equation}
	In particular,
	$$
		g\left( P\left( \nabla u - \frac{u}{v} \nabla v \right), \nabla u - \frac{u}{v} \nabla v \right) \equiv 0 \qquad \text{on } \Omega
	$$
	if and only if $u = C v$ for some constant $C \in \mathbb{R}$. Now, let us suppose that $\Omega_1$ and $\Omega_2$ have $C^2$ boundaries and let $u$ and $v$ be non-zero solutions of the Dirichlet problems
	\begin{alignat*}{6}
		Lu + \lambda_1^L(\Omega_1) u & = 0 && \quad \text{in } \Omega_1, & \qquad \qquad && Lv + \lambda_1^L(\Omega_2) v & = 0 && \quad \text{in } \Omega_2, \\
		u & = 0 && \quad \text{on } \partial\Omega_1, & \qquad \qquad && v & = 0 && \quad \text{on } \partial\Omega_2.
	\end{alignat*}
	Note that we can suppose $v >0$ on $\Omega_2$. Taking (\ref{picone}) into account, since $u=0$ on $\partial\Omega_1$ we get
	\begin{align*}
		0 & \leq \int_{\Omega_1} g(P(\nabla u),\nabla u) - g\left( \nabla \left(\frac{u^2}{v} \right), P(\nabla v) \right) = \left( \lambda_1^L(\Omega_1) - \lambda_1^L(\Omega_2) \right) \int_{\Omega_1} u^2.
	\end{align*}
	We now reason by contradiction assuming that $\overset{\circ}{\Omega_2 \setminus \Omega_1} \neq \varnothing$ and $\lambda_1^L(\Omega_1) = \lambda_1^L(\Omega_2)$. From the above inequalities it follows that on the connected components of $\Omega_1$, $u = C v$ for some $C \in \mathbb{R}$. Choose one of the component, say $\widetilde{\Omega}_1$, with $\partial \widetilde{\Omega}_1 \cap \Omega_2 \neq \varnothing$ (this is possible since $\Omega_2 \setminus \Omega_1$ has non-empty interior). Since $u \equiv 0$ on $\partial \Omega_1$, we have $v \equiv 0$ on $\partial \widetilde{\Omega}_1 \cap \Omega_2 \neq \varnothing$, reaching a contradiction.  
\end{demo}

The following generalization of Barta's theorem also holds.

\begin{prop}
	\label{barta}
	Let $(M,g)$ be a Riemannian manifold and for $F(x) \in L^{\infty}_{\mathrm{loc}}(M)$, $P : TM \longrightarrow TM$ let $L$ be the operator defined in (\ref{defL}). If $u\in C^2(M)$ is a positive function, we have
	\begin{equation}
		\label{bartaineq}
		\lambda_1^L(M) \geq \inf_M \left( -\frac{Lu}{u} \right).
	\end{equation}
\end{prop}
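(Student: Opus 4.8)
The plan is to deduce \eqref{bartaineq} directly from the extended Picone identity \eqref{picone} already established in the proof of Proposition \ref{propo2}, applied with the given positive function $u$ playing the role of ``$v$'' and an arbitrary nonzero test function $\phi\in C_c^\infty(M)$ playing the role of ``$u$''. We may assume $\inf_M(-Lu/u)>-\infty$, since otherwise \eqref{bartaineq} is trivially true. The strategy is then to bound the Rayleigh quotient appearing in the variational characterization \eqref{lambda1} of $\lambda_1^L(M)$ from below by $\inf_M(-Lu/u)$, uniformly in $\phi$, and to pass to the infimum over $\phi$ at the end.

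Concretely, substituting $\phi$ for ``$u$'' and $u$ for ``$v$'' in \eqref{picone}, I would obtain the pointwise inequality
\[
	0\leq g(P(\nabla\phi),\nabla\phi)-g\left(\nabla\Big(\tfrac{\phi^2}{u}\Big),P(\nabla u)\right).
\]
Since $u\in C^2(M)$ is strictly positive and $\phi$ has compact support, the function $\phi^2/u$ belongs to $C_c^1(M)$, so integrating over $M$ and applying the divergence theorem to the vector field $P(\nabla u)$ yields
\[
	0\leq \int_M g(P(\nabla\phi),\nabla\phi)+\int_M \frac{\phi^2}{u}\,\mathrm{div}(P(\nabla u)).
\]
Recalling from \eqref{defL} that $\mathrm{div}(P(\nabla u))=Lu+F(x)u$, this rearranges to
\[
	\int_M g(P(\nabla\phi),\nabla\phi)+F(x)\phi^2\geq \int_M \phi^2\Big(-\frac{Lu}{u}\Big)\geq \inf_M\Big(-\frac{Lu}{u}\Big)\int_M\phi^2.
\]

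Dividing by $\int_M\phi^2>0$ and taking the infimum over all nonzero $\phi\in C_c^\infty(M)$ then gives \eqref{bartaineq} via \eqref{lambda1}. The only point requiring care is the integration by parts: one must verify that $\phi^2/u$ is a legitimate compactly supported $C^1$ test function and that $\mathrm{div}(P(\nabla u))$ is continuous, both of which follow from $u\in C^2(M)$, $u>0$, and $P$ of class $C^1$. I expect this regularity bookkeeping to be the only (minor) obstacle; the substantive part of the argument is entirely absorbed into the already-proven Picone identity \eqref{picone}, which plays here the role that the elementary gradient manipulation plays in the classical Barta theorem for the Laplacian.
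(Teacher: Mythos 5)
Your proof is correct and is essentially the same as the paper's: the paper's argument integrates the divergence of $Z=-\frac{\phi^2}{u}P(\nabla u)$ and then applies Cauchy--Schwarz and Young's inequalities, which is exactly the expanded form of the Picone identity \rf{picone} you invoke, followed by the same integration by parts against $P(\nabla u)$ with the test function $\phi^2/u$. Your packaging via the already-established \rf{picone} is a harmless (arguably cleaner) reorganization of the identical computation.
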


\begin{demo}
	Let $\phi \in C_c^\infty (M)$ and consider the vector field
	$$
		Z = -\frac{\phi^2}{u} P(\nabla u).
	$$
	Taking the divergence and integrating we get
	\begin{equation}
		\label{barta1}
		2 \int_M \frac{\phi}{v} g\left( P(\nabla u), \nabla\phi \right) = - \int_M \phi^2 \mathrm{div}\left( \frac{P(\nabla u)}{u} \right).
	\end{equation}
	Since $P$ is positive definite and self-adjoint, by Cauchy-Schwarz and Young's inequalities
	\begin{equation}
		\label{barta2}
		\begin{split}
			2 \frac{\phi}{u} g\left( P(\nabla u), \nabla\phi \right) &  \leq 2 \frac{|\phi|}{u}\sqrt{g(P(\nabla u),\nabla u)}\sqrt{g(P(\nabla\phi),\nabla\phi)} \\
			& \leq \frac{\phi^2}{u^2} g(P(\nabla u),\nabla u) + g(P(\nabla\phi),\nabla\phi).
		\end{split}
	\end{equation}
	Using (\ref{barta2}), (\ref{barta1}) and the divergence theorem again we obtain
	\begin{equation}
		\label{bar2}
		\begin{split}
			\int_M -\phi L\phi & = \int_M g(P(\nabla\phi),\nabla\phi) + \int_M F(x)\phi^2 \\
			& \geq -\int_M \frac{\phi^2}{u^2} g(P(\nabla u),\nabla u) + 2 \int_M \frac{\phi}{v} g\left( P(\nabla u), \nabla\phi \right) + \int_M F(x)\phi^2 \\
			& = - \int_M \left( g\left(\frac{\nabla u}{u^2}, P(\nabla u)\right) + \mathrm{div}\left( \frac{P(\nabla u)}{u} \right) - F(x) \right) \phi^2 \\
			& = - \int_M \left( \frac{\mathrm{div}( P(\nabla u))}{u} - F(x) \right) \phi^2 = \int_M -\frac{Lv}{v} \phi^2 \geq \inf_M \left(- \frac{Lv}{v} \right) \int_M \varphi^2 
		\end{split}
	\end{equation}
	and by definition (\ref{lambda1}) of $\lambda_1^L(M)$ we obtain inequality (\ref{bartaineq}).
\end{demo}

We conclude this paragraph with the following characterization of non-negativity of $\lambda_1^L(\Omega)$ for an open subset $\Omega \subseteq M$. For $P = \mathbb{I} : TM \longrightarrow TM$ and $F(x) \in C^{\infty}(M)$, it is given as Theorem 1 in \cite{FS}. For $P = \mathbb{I}$ and $F(x) \in L^{\infty}_{\mathrm{loc}}(M)$ it is proved as Lemma 3.10 of \cite{PRS2}. The proof for a general self-adjoint, positive definite $C^1$ endomorphism $P$ is a straightforward extension of the proof given in \cite{PRS2} for $P = \mathbb{I}$.

\begin{lema}
	\label{lemeq}
	Let $(M,g)$ be a Riemannian manifold, $\Omega \subseteq M$ an open set with possibly non-compact closure. For $F(x)\in L^{\infty}_{\mathrm{loc}}(M)$, $P : TM \longrightarrow TM$ as above, let $\lambda_1^L(\Omega)$ be the first eigenvalue on $\Omega$ of the operator $L$ defined in (\ref{defL}). Then, the following conditions are equivalent:
	\begin{enumerate}[noitemsep,nolistsep]
		\item $\lambda_1^L (\Omega) \geq 0$.
		\item There exists $u \in C^1(\Omega)$, $u > 0$, weak solution of $Lu =0$ on $\Omega$.
		\item There exists $u \in H^1_{\mathrm{loc}}(\Omega)$, $u > 0$, weak solution of $Lu \leq 0$ on $\Omega$.
	\end{enumerate}
\end{lema}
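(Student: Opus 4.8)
The plan is to establish the equivalence through the cyclic chain $(2)\Rightarrow(3)\Rightarrow(1)\Rightarrow(2)$. The implication $(2)\Rightarrow(3)$ is immediate, since a positive $u\in C^1(\Omega)$ with $Lu=0$ weakly is a fortiori a positive $u\in H^1_{\mathrm{loc}}(\Omega)$ with $Lu\leq0$ weakly. The implication $(3)\Rightarrow(1)$ is a weak-supersolution version of the Barta-type estimate of Proposition \ref{barta} and rests on the Picone identity \rf{picone}. The substantial implication is $(1)\Rightarrow(2)$, which I would obtain by an exhaustion argument that produces the positive solution as a locally uniform limit of solutions of Dirichlet problems on relatively compact domains.

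For $(3)\Rightarrow(1)$, assume $u\in H^1_{\mathrm{loc}}(\Omega)$, $u>0$, is a weak supersolution, i.e.
\[
	\int_\Omega g(P(\nabla u),\nabla\psi) + F(x)u\psi \geq 0 \qquad\text{for all } 0\leq\psi\in C^\infty_c(\Omega).
\]
Since $P$ is continuous and positive definite, it is locally bounded below by a positive multiple of the identity, so $L$ is locally uniformly elliptic with $L^\infty_{\mathrm{loc}}$ coefficients; the De Giorgi--Nash--Moser Harnack inequality then guarantees that $u$ has a continuous representative which is locally bounded and locally bounded away from zero, whence $1/u\in L^\infty_{\mathrm{loc}}(\Omega)$. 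Thus for any $\phi\in C^\infty_c(\Omega)$ the function $\psi=\phi^2/u\geq0$ is an admissible test function in $H^1$ with compact support. Plugging it into the supersolution inequality and using the Picone identity \rf{picone} (valid precisely because $P$ is self-adjoint and positive definite), with the roles of $u$ and $v$ there played by $\phi$ and $u$ respectively, I obtain
\[
	\int_\Omega g(P(\nabla\phi),\nabla\phi) \geq \int_\Omega g\left(\nabla\left(\frac{\phi^2}{u}\right),P(\nabla u)\right) \geq -\int_\Omega F(x)\phi^2,
\]
and the variational characterization \rf{lambda1} gives $\lambda_1^L(\Omega)\geq0$. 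The only technical point to be handled here is a standard density argument justifying the use of Picone's identity for merely $H^1_{\mathrm{loc}}$ functions, obtained by approximating $u$ in $H^1$ on $\mathrm{supp}\,\phi$.

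For $(1)\Rightarrow(2)$, fix $o\in\Omega$ and a smooth exhaustion $\{\Omega_n\}$ of $\Omega$ by relatively compact domains with $C^2$ boundary, $o\in\Omega_1$, $\Omega_n\Subset\Omega_{n+1}$ and $\bigcup_n\Omega_n=\Omega$. By the strict monotonicity of Proposition \ref{propo2} together with the trivial monotonicity from \rf{lambda1}, $\lambda_1^L(\Omega_n) > \lambda_1^L(\Omega_{n+1}) \geq \lambda_1^L(\Omega)\geq0$, so $\lambda_1^L(\Omega_n)>0$ for every $n$. This makes the bilinear form associated with $L$ coercive on $H^1_0(\Omega_n)$, so by Lax--Milgram I can solve the Dirichlet problem $Lu_n=0$ in $\Omega_n$, $u_n=1$ on $\partial\Omega_n$ (writing $u_n=1+w_n$, $w_n\in H^1_0(\Omega_n)$); the positivity $\lambda_1^L(\Omega_n)>0$ also forces the generalized maximum principle, which yields $u_n>0$ in $\Omega_n$. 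Normalising $v_n=u_n/u_n(o)$ gives positive solutions of $Lv_n=0$ with $v_n(o)=1$. I would then apply the Harnack inequality to bound $v_n$ uniformly on each fixed compact set, followed by interior $W^{2,p}$ and $C^{1,\alpha}$ estimates, to extract a subsequence converging in $C^1_{\mathrm{loc}}(\Omega)$ to a limit $v$ solving $Lv=0$ weakly on all of $\Omega$, with $v\geq0$ and $v(o)=1$; a final application of Harnack gives $v>0$, which is condition (2).

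The main obstacle is the passage to the limit in $(1)\Rightarrow(2)$: one must prevent the normalised solutions $v_n$ from degenerating, and this is exactly where the Harnack inequality is essential, both to produce the uniform local bounds needed for $C^1_{\mathrm{loc}}$ compactness and to ensure that the limit remains strictly positive rather than vanishing. The only genuinely new feature relative to the case $P=\mathbb I$ treated in \cite{PRS2} is the bookkeeping of the endomorphism $P$: being self-adjoint, positive definite and of class $C^1$, it is locally uniformly elliptic, so the Picone identity \rf{picone}, the Harnack inequality and the interior elliptic estimates all apply unchanged, and the argument of \cite{PRS2} carries over with only notational modifications.
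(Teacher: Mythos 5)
Your proof is correct and follows essentially the same route as the paper, which offers no argument of its own but cites Theorem 1 of \cite{FS} and Lemma 3.10 of \cite{PRS2} and states that the extension to a general positive definite $C^1$ endomorphism $P$ is straightforward; your cycle $(2)\Rightarrow(3)\Rightarrow(1)\Rightarrow(2)$, with the Picone identity \rf{picone} driving $(3)\Rightarrow(1)$ and the exhaustion--Dirichlet--Harnack compactness argument for $(1)\Rightarrow(2)$, is precisely that proof. One minor imprecision: for a mere weak supersolution the weak Harnack inequality yields a local positive essential lower bound (hence $1/u\in L^\infty_{\mathrm{loc}}$) but not continuity or local boundedness above, which is harmless since your argument only uses the lower bound.
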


\noindent We remark that when $P$ and $F(x)$ are smooth, which will always be the case in the following, standard elliptic regularity ensures that solutions of $Lu=0$ are also smooth.

\subsection{Gauss equations}

\label{gausseq}

Our main reference is O'Neill's book \cite{O'N}. However, we remark that we adopt the convention of defining the Riemann curvature operator $\mathrm{R}$ of a semi-Riemannian manifold $(M,g)$ by setting
$$
	R(X,Y)Z = \nabla_X(\nabla_Y Z) - \nabla_Y(\nabla_X Z) - \nabla_{[X,Y]}Z \qquad \text{for every } X,Y,Z\in\mathfrak X(M),
$$
so we have $R(X,Y)Z = -R_{XY}Z = R_{YX}Z$ for every $X,Y,Z\in\mathfrak X(M)$, where $R_{(\;\cdot\;,\;\cdot\;)}(\;\cdot\;)$ is the notation used in \cite{O'N}. The $(0,4)$-form Riemann curvature tensor $\mathrm{Riem}$ is then given by
$$
	\mathrm{Riem}(W,Z,X,Y) = g(R(X,Y)Z,W) \qquad \text{for every } X,Y,Z,W\in \mathfrak X(M),
$$
the sectional curvature of any non-degenerate $2$-plane $X\wedge Y\subseteq TM$ spanned by a couple of vectors $X,Y\in TM$ is
\begin{equation}
	\label{sect}
	\mathrm{Sect}(X\wedge Y) = \frac{\mathrm{Riem}(X,Y,X,Y)}{g(X,X)g(Y,Y)-g(X,Y)^2}.
\end{equation}
For every $p\in M$ and for every choice of a $g$-orthonormal basis $\{E_i,\dots,E_{\dim M}\}$ of $T_p M$, the values of the Ricci tensor $\mathrm{Ric}$ and of the scalar curvature $\mathrm{S}$ are given by
\begin{align}
	\label{ricci}
	\mathrm{Ric}(X,Y) & = \sum_{i=1}^{\dim M} g(E_i,E_i)\mathrm{Riem}(X,E_i,Y,E_i) \qquad \text{for every } X,Y\in T_p M, \\
	\label{scalar}
	\mathrm{S}(p) & = \sum_{i=1}^{\dim M} g(E_i,E_i)\mathrm{Ric}(E_i,E_i).
\end{align}

Let $\psi:M\longrightarrow\overline{M}$ be a spacelike hypersurface immersed in a spacetime $\overline{M}$, with unit timelike vector field $N$ and shape operator $A$ in the direction of $N$. For every $X,Y,Z,W\in \mathfrak X(M)$ we have the validity of Gauss equations (see Theorem 4.5 and Lemma 4.19 of \cite{O'N})
\begin{equation}
	\label{gauss}
	\mathrm{Riem}(X,Z,Y,W) = \overline{\mathrm{Riem}}(X,Z,Y,W) - g(AX,Y) g(AZ,W) + g(AX,W) g(AZ,Y),
\end{equation}
for any $X,Y,Z,W\in TM$, where $\mathrm{Riem}$ and $\overline{\mathrm{Riem}}$ are the Riemann curvature tensors of $(M,g)=(M,\psi^{\ast}\overline{g})$ and $(\overline{M},\overline{g})$, respectively. For every $p\in M$, $X,Z\in T_p M$ and for any orthonormal basis $\{E_1,\dots,E_m\}$ of $T_p M$ we have, by (\ref{ricci}) and (\ref{gauss}),
\begin{equation}
	\label{ric}
	\begin{split}
		\mathrm{Ric}(X,Y) & = \sum_{i=1}^m \overline{\mathrm{Riem}}(X,E_i,Y,E_i) - \sum_{i=1}^m g(AX,Y)g(AE_i,E_i) \\
		& \phantom{= \;} + \sum_{i=1}^m g(AX,E_i)g(AY,E_i) \\
		& = \overline{\mathrm{Ric}}(X,Y) + \overline{\mathrm{Riem}}(X,N,Y,N) + mH g(AX,Y) + g(AX,AY),
	\end{split}
\end{equation}
recalling that $mH = -\mathrm{trace}(A)$. Since $\{E_1,\dots,E_m,N\}$ is a $\overline{g}$-orthonormal basis of $T_{\psi(p)}\overline{M}$, by (\ref{scalar}), (\ref{ricci}) and (\ref{ric}) we get
\begin{equation}
	\label{scal}
	\begin{split}
		\mathrm{S} & = \sum_{i=1}^m \left( \overline{\mathrm{Ric}}(E_i,E_i) + \overline{\mathrm{Riem}}(N,E_i,N,E_i) + mH g(AE_i,E_i) + g(A^2E_i,E_i) \right) \\
		& = \sum_{i=1}^m \overline{\mathrm{Ric}}(E_i,E_i) + \overline{\mathrm{Ric}}(N,N) - m^2H^2 + \mathrm{trace}(A^2) \\
		& = \overline{\mathrm{S}} + 2\overline{\mathrm{Ric}}(N,N) - m^2H^2 + \mathrm{trace}(A^2).
	\end{split}
\end{equation}
If $\overline{M}$ has constant sectional curvature $\overline{\kappa}$, then, for every $X,Y,Z,W\in\mathfrak X(\overline{M})$, 
\begin{alignat*}{1}
	\overline{\mathrm{Riem}}(X,Z,Y,W) & = \overline{\kappa}(g(X,Y)g(Z,W)-g(X,W)g(Y,Z)), \\
	\overline{\mathrm{Ric}}(X,Y) & = m\overline{\kappa}\,\overline{g}(X,Y), \\
	\overline{\mathrm{S}} & = m(m+1)\overline{\kappa}
\end{alignat*}
and for every self-adjoint endomorphism $P : TM \longrightarrow TM$ it follows that
\begin{equation}
	\label{Pscal}
	\begin{split}
		\sum_{i=1}^m \mathrm{Ric}(P E_i,E_i) & = \sum_{i=1}^m \overline{\mathrm{Ric}}(P E_i,E_i) + \sum_{i=1}^m \overline{\kappa}\,\overline{g}(N,N) g(PE_i,E_i) \\
		& \phantom{= \;} + \sum_{i=1}^m mHg(APE_i,E_i) + \sum_{i=1}^m g(AP E_i,AE_i) \\
		& = \sum_{i=1}^m \overline{\mathrm{Ric}}(P E_i,E_i) - \overline{\kappa}\,\mathrm{trace}(P) + mH\,\mathrm{trace}(AP) + \mathrm{trace}(A^2P).
	\end{split}
\end{equation}

\section{Maximal hypersurfaces in locally symmetric spacetimes}

In this section we first prove Theorems \ref{intro:teostab} and \ref{intro:teocsc} of the Introduction, then we focus our attention to the case of maximal surfaces and we give proofs of Theorems \ref{intro:teosup1} and \ref{intro:teosup2}. We start with a slight generalization of results obtained by Nishikawa (Theorem B of \cite{N}) and Ishihara (Theorems 1.1 and 1.2 of \cite{I}), whose proof relies on Theorem \ref{KO} below, a consequence of a more general result which is proved as Theorem 3.6 in \cite{AMR}.

\begin{defi}[Definitions 2.1 and 2.3 of \cite{AMR}]
	\label{MP}
	Let $M$ be a Riemannian manifold. The Omori-Yau maximum principle for the Laplacian is said to hold on $M$ if, for any function $u\in C^2(M)$ with $u^{\ast} = \sup_M u < +\infty$, there exists a sequence of points $\{x_k\}_{k\in\mathbb{N}}\subseteq M$ satisfying
	$$
		\text{(i) } u(x_k) > u^{\ast} - \frac{1}{k}, \quad \text{(ii) } |\nabla u(x_k)| < \frac{1}{k} \quad \text{and} \quad \text{(iii) } \Delta u(x_k) < \frac{1}{k} \qquad \forall k\in\mathbb{N}.
	$$
	The weak maximum principle for the Laplacian is said to hold on $M$ if, for any function $u$ as above, there exists a sequence of points $\{x_k\}_{k\in\mathbb{N}}\subseteq M$ such that (i) and (iii) hold.
\end{defi}

\begin{teor}
	\label{KO}
	Let $(M,g)$ be a Riemannian manifold on which the Omori-Yau maximum principle for the Laplacian holds, $a\in\mathbb R$ and $F$ a positive continuous function on $[a,+\infty)$ satisfying
	$$
		\int_{a+\varepsilon}^{+\infty} \left(\int_a^t F(s)ds \right)^{-1/2} dt < +\infty \qquad \text{and} \qquad \limsup_{t\to+\infty} \frac{1}{tF(t)}\int_a^t F(s)ds < +\infty
	$$
	for some (hence, any) $\varepsilon>0$. If $u\in C^2(M)$, $f\in C^0(\mathbb R)$ are such that
	$$
		\Delta u \geq f(u) \quad \text{on } M \qquad \text{and} \qquad \liminf_{t\to+\infty} \frac{f(t)}{F(t)} > 0,
	$$
	then $u^{\ast} := \sup_M u$ is finite and $f(u^{\ast}) \leq 0$.
\end{teor}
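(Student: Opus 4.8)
The statement is of Keller--Osserman type, and I would split it into two parts: first the finiteness $u^{\ast}<+\infty$, which is the substantial assertion, and then the inequality $f(u^{\ast})\le 0$, which follows easily. For the latter, granting $u^{\ast}<+\infty$, I would invoke the weak maximum principle (which holds, being weaker than the Omori--Yau principle that is assumed): it produces a sequence $\{y_k\}$ with $u(y_k)\to u^{\ast}$ and $\Delta u(y_k)<1/k$. Combined with $\Delta u\ge f(u)$ this gives $f(u(y_k))<1/k$, and letting $k\to\infty$ the continuity of $f$ yields $f(u^{\ast})\le 0$.

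The core is therefore to show $u^{\ast}<+\infty$, and here I would argue by contradiction, assuming $u^{\ast}=+\infty$. Writing $G(t)=\int_a^t F(s)\,ds$, the first integral hypothesis is exactly the Keller--Osserman condition guaranteeing that
$$
	\phi(t) := \int_T^t G(s)^{-1/2}\,ds
$$
stays bounded as $t\to+\infty$, where $T\ge a$ is chosen (using $\liminf f/F>0$, and enlarging $T$ so that $T\ge a+\varepsilon$) so that $f(t)\ge cF(t)$ on $[T,+\infty)$ for some $c>0$. After extending $\phi$ to a bounded, $C^2$, nondecreasing function on all of $\mathbb{R}$, the composition $v=\phi(u)$ is bounded above, so the Omori--Yau principle applies and furnishes points $x_k$ with $v(x_k)\to\sup_M v$, $|\nabla v(x_k)|\to 0$ and $\Delta v(x_k)<1/k$. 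Since $\phi$ attains its supremum only in the limit $t\to+\infty$ and $u^{\ast}=+\infty$, necessarily $u(x_k)\to+\infty$, so eventually $u(x_k)\ge T$. Computing
$$
	\Delta v = \phi'(u)\Delta u + \phi''(u)|\nabla u|^2 \ge cG(u)^{-1/2}F(u) - \tfrac{1}{2}G(u)^{-3/2}F(u)|\nabla u|^2
$$
and using $|\nabla u|^2 = G(u)|\nabla v|^2$ (because $\phi'=G^{-1/2}$), I obtain $\Delta v \ge G(u)^{-1/2}F(u)\big(c-\tfrac12|\nabla v|^2\big)$ at these points.

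The decisive point is then to show that $G(u(x_k))^{-1/2}F(u(x_k))$ does not vanish along the sequence, for this contradicts $\Delta v(x_k)<1/k\to 0$ once $|\nabla v(x_k)|\to 0$ drives the bracket to $c>0$. This is where both hypotheses on $F$ enter. A short real-variable argument shows that the integrability of $G^{-1/2}$ forces $\liminf_{t\to+\infty}G(t)/t^2>0$: otherwise, along a sequence $t_n$ with $G(t_n)=o(t_n^2)$, monotonicity of $G$ gives $\int_{t_n/2}^{t_n}G^{-1/2}\ge \tfrac{t_n}{2}G(t_n)^{-1/2}\to+\infty$, contradicting the convergence of the tail of $\int G^{-1/2}$. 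The second hypothesis $\limsup_{t}\tfrac{1}{tF(t)}G(t)<+\infty$ then gives $F(t)\ge G(t)/(C't)$ for large $t$, whence $G(t)^{-1/2}F(t)\ge \tfrac{1}{C'}\sqrt{G(t)/t^2}$ has a positive lower limit. This closes the contradiction and proves $u^{\ast}<+\infty$. The main obstacle, and the only genuinely delicate step, is the treatment of the gradient term $\phi''(u)|\nabla u|^2$: the choice $\phi'=G^{-1/2}$ is precisely what lets the Omori--Yau gradient estimate absorb it, and the two conditions on $F$ are exactly what prevent the resulting lower bound on $\Delta v$ from degenerating.
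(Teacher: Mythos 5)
Your argument is correct and complete. A preliminary remark: the paper does not actually prove Theorem~\ref{KO}; it is presented as a consequence of the more general Theorem~3.6 of \cite{AMR}, so there is no in-paper proof to compare against. Your proof is, in essence, the classical Keller--Osserman argument underlying that reference: the splitting into the finiteness of $u^{\ast}$ plus the easy deduction of $f(u^{\ast})\leq 0$ from the weak maximum principle is standard, and the substitution $v=\phi(u)$ with $\phi'=G^{-1/2}$, $G(t)=\int_a^t F(s)\,ds$, is exactly the device that turns the integral condition into boundedness of $v$ and lets the Omori--Yau gradient estimate absorb the second-order term $\phi''(u)|\nabla u|^2$. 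The two points a referee would scrutinize are both handled: (a) the Omori--Yau sequence $\{x_k\}$ for $v$ must satisfy $u(x_k)\to+\infty$, which you justify by noting that $\phi$ is strictly increasing on $[T,+\infty)$ with supremum attained only in the limit, so that eventually $u(x_k)\geq T$ and the unextended formula for $\phi$, together with $f(u(x_k))\geq cF(u(x_k))$, applies; and (b) the non-degeneracy $\liminf_{t\to+\infty}G(t)^{-1/2}F(t)>0$, which you correctly extract by combining the elementary observation that convergence of $\int^{+\infty}G^{-1/2}$ forces $\liminf_{t\to+\infty}G(t)/t^2>0$ (the estimate $\int_{t_n/2}^{t_n}G^{-1/2}\geq\tfrac{t_n}{2}G(t_n)^{-1/2}$ against the vanishing tail is the right argument) with the bound $F(t)\geq G(t)/(C't)$ coming from the $\limsup$ hypothesis. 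This gives $\Delta v(x_k)\geq\tfrac{c}{2}\,G(u(x_k))^{-1/2}F(u(x_k))$ bounded away from zero, contradicting $\Delta v(x_k)<1/k$. I see no gaps; the only cosmetic point is to state explicitly that the $C^2$, bounded, nondecreasing extension of $\phi$ to $(-\infty,T)$ exists because $\phi(T)=0$, $\phi'(T)=G(T)^{-1/2}$ and $\phi''(T)$ are finite ($T\geq a+\varepsilon$ guarantees $G(T)>0$), and that the extended part of $\phi$ is never used in the differential inequality.
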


\noindent When $M$ is a complete Riemannian manifold, a sufficient condition for the validity of the Omori-Yau maximum principle for the Laplacian on $M$ is the existence of a constant $C\in\mathbb R$ such that $\mathrm{Ric}(X,X)\geq C|X|^2$ for every $X\in TM$, see the book \cite{AMR}. We are now ready to state and prove the following result. Recall that a semi-Riemannian manifold is said to be locally symmetric if its Riemannian curvature tensor is parallel. Semi-Riemannian manifolds of constant curvature provide the simplest examples of such manifolds.

\begin{teor}
	\label{teoni2}
	Let $\overline{M}$ be a locally symmetric spacetime of dimension $m+1$ whose Ricci and sectional curvatures satisfy
	\begin{align}
		\label{ricbound1}
		\overline{\mathrm{Ric}}(Z, Z) \geq c_1 & \qquad \text{for all unit timelike vectors } Z \in T\overline{M} \\
		\label{sectbound1}
		\overline{\mathrm{Sect}}(\Pi) \geq c_2 & \qquad \text{for all non-degenerate tangent $2$-planes } \Pi\subseteq T\overline{M}
	\end{align}
	for some constants $c_1, c_2 \in \mathbb{R}$ and let $\psi : M \longrightarrow \overline{M}$ be a complete maximal hypersurface. Then the shape operator $A$ satisfies
	\begin{equation}
		\label{Abound}
		\mathrm{trace}(A^2) \leq \max\left\{0,-(c_1+2mc_2)\right\} \qquad \text{on } M.
	\end{equation}
	In particular, if $c_1\geq -2mc_2$ then $\psi$ is totally geodesic.
\end{teor}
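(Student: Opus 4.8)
The plan is to control the nonnegative function $u=\mathrm{trace}(A^2)$ by feeding a Simons-type formula into the Omori--Yau maximum principle via Theorem \ref{KO}. First I would check that the principle is available. Since $\psi$ is maximal, for every unit $X\in T_pM$, completed to a local orthonormal tangent frame $\{E_1=X,E_2,\dots,E_m\}$, formula (\ref{ric}) with $H=0$ gives
\[
	\mathrm{Ric}(X,X)=\overline{\mathrm{Ric}}(X,X)+\overline{\mathrm{Riem}}(X,N,X,N)+|AX|^2=\sum_{j=2}^{m}\overline{\mathrm{Sect}}(X\wedge E_j)+|AX|^2\geq(m-1)c_2,
\]
where the second equality uses that $\{E_1,\dots,E_m,N\}$ is an ambient orthonormal basis and the inequality uses (\ref{sectbound1}) on the spacelike (hence non-degenerate) planes $X\wedge E_j$, discarding $|AX|^2\geq0$. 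Thus $\mathrm{Ric}\geq(m-1)c_2\,g$ on the complete manifold $M$, so the Omori--Yau maximum principle for the Laplacian holds on $M$.

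The heart of the argument is a Bochner/Simons identity for $u$. Fixing at each point a frame with $AE_i=\kappa_i E_i$, maximality gives $\sum_i\kappa_i=0$ and $u=\sum_i\kappa_i^2$, and I would derive, for a maximal hypersurface in a locally symmetric spacetime,
\[
	\tfrac12\Delta u=|\nabla A|^2+u^2+\mathcal C .
\]
The quartic term is exactly $u^2=(\mathrm{trace}(A^2))^2$: substituting the Gauss equation (\ref{gauss}) into the Bochner formula for the (almost) Codazzi tensor $A$, the only surviving purely algebraic contraction is $-\tfrac12\sum_{i,j}\kappa_i\kappa_j(\kappa_i-\kappa_j)^2=(\mathrm{trace}(A^2))^2$, the cubic pieces dropping out because they carry a factor $\mathrm{trace}(A)=0$. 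The term $\mathcal C$ gathers the ambient curvature; local symmetry $\overline{\nabla}\,\overline{\mathrm{Riem}}=0$ is exactly what closes the identity, annihilating the derivatives of curvature produced when commuting covariant derivatives and handling the Codazzi defect $(\nabla_X A)Y-(\nabla_Y A)X=(\overline{R}(X,Y)N)^{\top}$.

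The decisive step is the estimate $\mathcal C\geq(c_1+2mc_2)\,u$. Expressing $\mathcal C$ through $\overline{\mathrm{Riem}}(E_i,E_j,E_i,E_j)=\overline{\mathrm{Sect}}(E_i\wedge E_j)\geq c_2$ and $\overline{\mathrm{Riem}}(E_i,N,E_i,N)=-\overline{\mathrm{Sect}}(E_i\wedge N)$, and recalling $\overline{\mathrm{Ric}}(N,N)=\sum_i\overline{\mathrm{Riem}}(E_i,N,E_i,N)\geq c_1$, the spacelike sectional terms and the timelike Ricci term combine---using once more $\sum_i\kappa_i=0$, through identities such as $\tfrac12\sum_{i,j}(\kappa_i-\kappa_j)^2=mu$---to produce the coefficient $c_1+2mc_2$. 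I expect this weighting to be the main obstacle, since it is precisely here that both hypotheses (\ref{ricbound1}) and (\ref{sectbound1}) must be inserted with the correct multiplicities; the constant-curvature model, where $A$ is a genuine Codazzi tensor, $\mathcal C$ has no $N$-contributions, and $c_1+2mc_2=m\overline{\kappa}$, provides the consistency check.

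Discarding $|\nabla A|^2\geq0$ and inserting the curvature estimate leaves
\[
	\Delta u\geq 2u\,(u+c_1+2mc_2)\qquad\text{on }M.
\]
I would then close the argument with Theorem \ref{KO}, applied to $f(t)=2t(t+c_1+2mc_2)$ and $F(t)=t^2$ (with $a>0$): the two requirements on $F$ hold because $\int^{+\infty}\big(\!\int_a^t s^2\,ds\big)^{-1/2}dt<+\infty$ and $t^{-3}\int_a^t s^2\,ds\to\tfrac13$, while $\liminf_{t\to+\infty}f(t)/F(t)=2>0$. Hence $u^{\ast}=\sup_M u<+\infty$ and $f(u^{\ast})\leq0$, that is $u^{\ast}(u^{\ast}+c_1+2mc_2)\leq0$. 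Since $u^{\ast}\geq0$, this forces $u^{\ast}\leq\max\{0,-(c_1+2mc_2)\}$, which is (\ref{Abound}); and when $c_1\geq-2mc_2$ the bound gives $u\equiv0$, so $\psi$ is totally geodesic.
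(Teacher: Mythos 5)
Your proposal is correct and follows essentially the same route as the paper: the Ricci lower bound $\mathrm{Ric}\geq(m-1)c_2\,g$ via the Gauss equation to secure the Omori--Yau principle, Nishikawa's Simons formula with the curvature terms estimated by $\overline{\mathrm{Ric}}(N,N)\,u\geq c_1u$ and $2\sum_{i<j}(\kappa_i-\kappa_j)^2\overline{\mathrm{Sect}}(E_i\wedge E_j)\geq 2mc_2u$ (using $\mathrm{trace}(A)=0$), and then Theorem \ref{KO} with $F(t)=t^2$ applied to $\Delta u\geq 2u(u+c_1+2mc_2)$. All the algebraic identities you invoke check out, so no gap.
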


\begin{demo}
	Let $q\in M$ be a given point and let $\{E_1,\dots,E_m\}$ be an orthonormal basis of $T_q M$ given by the principal directions of curvature, that is, eigenvectors of $A(q)$ corresponding to the principal curvatures $\kappa_1,\dots,\kappa_m$. Since $\overline{M}$ is locally symmetric and $\psi$ is maximal, following Nishikawa \cite{N} we have Simons' formula
	\begin{align*}
		\frac{1}{2}\Delta\mathrm{trace}(A^2) & = |\nabla A|^2 + \overline{\mathrm{Ric}}(N,N)\mathrm{trace}(A^2) \\
		& \phantom{= \;} + 2\sum_{1\leq i<j\leq m}(\kappa_i-\kappa_j)^2\,\overline{\mathrm{Sect}}(E_i\wedge E_j) + \left(\mathrm{trace}(A^2)\right)^2
	\end{align*}
	at $q$ and we can estimate
	\begin{align*}
		2\sum_{1\leq i<j\leq m}(\kappa_i-\kappa_j)^2\,\overline{\mathrm{Sect}}(E_i\wedge E_j) & \geq 2c_2\sum_{1\leq i<j\leq m}(\kappa_i-\kappa_j)^2 \\
		& = 2mc_2\sum_{i=1}^m \kappa_i^2 = 2mc_2\mathrm{trace}(A^2), \\
		\overline{\mathrm{Ric}}(N,N)\mathrm{trace}(A^2) & \geq c_1\mathrm{trace}(A^2).
	\end{align*}
	So, the function $u=\mathrm{trace}(A^2)$ satisfies $\Delta u \geq 2(c_1 + 2 m c_2 + u)u$. For any given unit vector $X\in T_q M$, by choosing an orthonormal basis $\{e_1,\dots,e_{m-1}\}$ of $X^{\bot}\subseteq T_q M$ we deduce from Gauss equations (see formulas (\ref{gauss}) and (\ref{ric}) in Section \ref{gausseq}) that
	$$
		\mathrm{Ric}(X,X) = \sum_{i=1}^{m-1}\overline{\mathrm{Sect}}(X\wedge e_i) + |AX|^2 \geq (m-1)c_2.
	$$
	By bilinearity, it follows that $\mathrm{Ric}(X,X)\geq(m-1)c_2|X|^2$ for every $X\in TM$. Since $M$ is complete, the Omori-Yau maximum principle for the Laplacian on $M$. Applying Theorem \ref{KO} with the choice $F(t)= t^2$, we deduce that $u$ is bounded above and that $u^{\ast} = \sup_M u$ satisfies
	$$
		(c_1 + 2mc_2 + u^{\ast}) u^{\ast} \leq 0,
	$$
	that is, $u^{\ast} \leq \max\left\{0,-(c_1+2mc_2)\right\}$.
\end{demo}

\begin{rem}
	We remark that in Theorem B of \cite{N} the general estimate (\ref{Abound}) is not stated and it is only proved that $\psi$ is totally geodesic when $c_1+2mc_2\geq0$.
\end{rem}

\begin{teor}
	\label{teocpt}
	Let $\overline{M}$ be a spacetime such that $\overline{\mathrm{Ric}}(Z,Z) \geq 0$ for every timelike vector $Z\in T\overline{M}$. If $\psi: M \longrightarrow \overline{M}$ is a maximal hypersurface, then $\psi$ is stable. If $M$ is also compact, then $\psi$ is totally geodesic.
\end{teor}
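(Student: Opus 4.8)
The plan is to prove the two assertions in the order they are stated: stability follows at once from the sign of the zeroth order coefficient of the stability operator, and then compactness is used to upgrade to total geodesy. For stability I would look at the potential
$F := \overline{\mathrm{Ric}}(N,N) + \mathrm{trace}(A^2)$
appearing in $L$ and observe that it is pointwise nonnegative. Indeed, the chosen unit normal $N$ is timelike, so the hypothesis $\overline{\mathrm{Ric}}(Z,Z)\ge 0$ on timelike vectors gives $\overline{\mathrm{Ric}}(N,N)\ge 0$, while $\mathrm{trace}(A^2)\ge 0$ trivially because $A$ is $g$-self-adjoint. Given $F\ge 0$, the fastest route is the variational characterization (\ref{lambdastab}): for every compactly supported $\phi$ one has $\int_M\big(|\nabla\phi|^2+F\phi^2\big)\ge 0$, hence $\lambda_1^L(M)\ge 0$ and $\psi$ is stable. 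Equivalently, I could feed $u\equiv 1$ into Barta's inequality (Proposition \ref{barta}) to get $\lambda_1^L(M)\ge \inf_M(-L1)=\inf_M F\ge 0$, or note that $u\equiv 1$ is a positive supersolution of $Lu=-F\le 0$ and quote Lemma \ref{lemeq}.

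For the compact case the target is $\mathrm{trace}(A^2)\equiv 0$, and it is worth stressing that one should \emph{not} expect $F\equiv 0$: the term $\overline{\mathrm{Ric}}(N,N)$ may remain strictly positive (as on the totally geodesic spacelike slices of anti--de Sitter spacetime, which however are non-compact, so the content of the statement is really about ambients admitting compact maximal hypersurfaces). My first attempt would be to integrate a Simons-type identity for $\tfrac12\Delta\,\mathrm{trace}(A^2)$ over the closed manifold $M$, so that the left-hand side integrates to $0$, and then to use maximality together with the curvature signs provided by the hypothesis to render every surviving term nonnegative, forcing $\nabla A\equiv 0$ and finally $A\equiv 0$. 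The catch, and the step I expect to be the genuine obstacle, is that here $\overline{M}$ is only assumed to obey TCC and is \emph{not} locally symmetric, so Simons' formula carries extra terms involving $\overline{\nabla}\,\overline{\mathrm{Riem}}$ that are neither signed nor exact divergences; the clean pointwise computation available in Theorem \ref{teoni2} is therefore not at one's disposal.

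To circumvent local symmetry I would instead try to manufacture a divergence identity directly from the Codazzi equation, which for a maximal hypersurface contracts to $\mathrm{div}_g A = \pm\,\overline{\mathrm{Ric}}(N,\cdot)^{\top}$, and then integrate by parts on the closed $M$, combining it with the Gauss relations (\ref{ric})--(\ref{scal}) so as to isolate $\int_M \mathrm{trace}(A^2)$ against a manifestly nonnegative quantity assembled from $\overline{\mathrm{Ric}}(N,N)\ge 0$ and $|\nabla A|^2\ge 0$. Arranging the contractions so that the cross terms cancel and an inequality of the form $\int_M \mathrm{trace}(A^2)\le 0$ emerges is the heart of the matter; once this is achieved, $\mathrm{trace}(A^2)\ge 0$ immediately yields $\mathrm{trace}(A^2)\equiv 0$, hence $A\equiv 0$ and $\psi$ totally geodesic. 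In summary, stability is essentially a one-line consequence of $F\ge 0$, whereas the compact rigidity is the substantive part, and its difficulty is precisely the separation of the curvature-of-the-ambient contribution from $\mathrm{trace}(A^2)$ without recourse to local symmetry.
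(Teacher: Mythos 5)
Your stability argument is correct and is exactly the paper's: $\overline{\mathrm{Ric}}(N,N)\geq 0$ because $N$ is timelike, $\mathrm{trace}(A^2)\geq 0$ because $A$ is self-adjoint, and the variational characterization (\ref{lambdastab}) gives $\lambda_1^L(M)\geq 0$ at once.

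The compact case, however, is where your proposal has a genuine gap: you sketch two strategies (an integrated Simons identity, or a divergence identity built from the contracted Codazzi equation) and you yourself flag that neither is carried out --- the Simons route fails for the reason you state (the ambient is not locally symmetric, so $\overline{\nabla}\,\overline{\mathrm{Riem}}$ terms appear with no sign), and the Codazzi route is left at the level of ``arrange the contractions so that an inequality emerges,'' which is precisely the part that would need to be done and which I do not see how to do under TCC alone. What you are missing is that the rigidity is \emph{not} the substantive part: it is a two-line consequence of the stability you have already proved. Since $\lambda_1^L(M)\geq 0$, Lemma \ref{lemeq} provides a positive function $u$ with $Lu=0$, i.e.\ $\Delta u=\big(\overline{\mathrm{Ric}}(N,N)+\mathrm{trace}(A^2)\big)u$. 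Integrating over the compact $M$ and using the divergence theorem gives
\begin{equation*}
\int_M \big(\overline{\mathrm{Ric}}(N,N)+\mathrm{trace}(A^2)\big)\,u \;=\; \int_M \Delta u \;=\; 0,
\end{equation*}
and since the integrand is a product of a nonnegative factor and the positive factor $u$, it vanishes identically; in particular $\mathrm{trace}(A^2)\equiv 0$. Note that this also shows $\overline{\mathrm{Ric}}(N,N)\equiv 0$ on $M$, so your aside that ``one should not expect $F\equiv 0$'' is in fact contradicted by the correct proof: on a compact maximal hypersurface in a TCC spacetime the whole potential $F$ must vanish.
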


\begin{demo}
	The unit normal vector $N$ on $M$ is timelike so $\overline{\mathrm{Ric}}(N,N)\geq0$ and we have
	$$
		\int_M |\nabla\phi|^2 + \left( \overline{\mathrm{Ric}}(N,N)+\mathrm{trace}(A^2) \right)\phi^2 \geq 0 \qquad \text{for every } \phi \in C_c^\infty (M).
	$$
	Therefore $\psi$ is stable by definition and from Lemma \ref{lemeq} we deduce the existence of a positive function $u$ satisfying $\Delta u - \left( \overline{\mathrm{Ric}}(N,N) + \mathrm{trace}(A^2) \right) u =0$. If $M$ is compact we have
	$$
		\int_M \left( \overline{\mathrm{Ric}}(N,N)+\mathrm{trace}(A^2) \right)u = \int_M \Delta u = 0
	$$
	by the divergence theorem. But $\overline{\mathrm{Ric}}(N,N) \geq 0$, $\mathrm{trace}(A^2) \geq 0$ and $u>0$, so $\mathrm{trace}(A^2) \equiv 0$, that is, $\psi$ is totally geodesic.
\end{demo}

From Theorems \ref{teoni2} and \ref{teocpt} we easily deduce the next

\begin{teor}
	\label{teocsc}
	Let $\overline{M}$ be a spacetime of dimension $m+1$ and constant curvature $\overline{\kappa}$ and let $\psi : M \longrightarrow \overline{M}$ be a complete maximal hypersurface.
	\begin{itemize}[noitemsep,nolistsep]
		\item [i)] If $\overline{\kappa}>0$ then $M$ is compact and the immersion $\psi$ is totally geodesic and unstable.
		\item [ii)] If $\overline{\kappa}=0$ then $\psi$ is totally geodesic and stable.
		\item [iii)] If $\overline{\kappa}<0$ then $\psi$ is stable and the shape operator $A$ and the scalar curvature $\mathrm{S}$ of $M$ satisfy
		\begin{equation}
			\label{Abd}
			\mathrm{trace}(A^2)\leq -m\overline{\kappa}, \qquad \mathrm{S} \leq (m-2)m\overline{\kappa}.
		\end{equation}
		If $M$ is also compact, then $\psi$ is totally geodesic.
	\end{itemize}
\end{teor}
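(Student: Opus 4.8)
The plan is to read off the two curvature constants of a constant-curvature spacetime and feed them into Theorems \ref{teoni2} and \ref{teocpt}. Since $\overline{M}$ has constant curvature $\overline{\kappa}$, we have $\overline{\mathrm{Ric}}(X,X) = m\overline{\kappa}\,\overline{g}(X,X)$ and $\overline{\mathrm{Sect}}(\Pi) = \overline{\kappa}$ for every non-degenerate $2$-plane $\Pi$. In particular, for the unit timelike normal $N$ (so $\overline{g}(N,N) = -1$) we get $\overline{\mathrm{Ric}}(N,N) = -m\overline{\kappa}$, and hypotheses (\ref{ricbound1})--(\ref{sectbound1}) of Theorem \ref{teoni2} hold with $c_1 = -m\overline{\kappa}$ and $c_2 = \overline{\kappa}$. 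Thus $c_1 + 2mc_2 = m\overline{\kappa}$, and Theorem \ref{teoni2} yields the universal bound $\mathrm{trace}(A^2) \leq \max\{0, -m\overline{\kappa}\}$, from which everything else will be bookkeeping according to the sign of $\overline{\kappa}$.

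For the sign-by-sign analysis I would proceed as follows. When $\overline{\kappa} \geq 0$ the condition $c_1 \geq -2mc_2$ becomes $m\overline{\kappa} \geq 0$, so Theorem \ref{teoni2} already gives that $\psi$ is totally geodesic; this settles the totally geodesic claims in cases i) and ii). For case ii) ($\overline{\kappa} = 0$), since $\overline{\mathrm{Ric}}(Z,Z) = 0$ for every timelike $Z$, Theorem \ref{teocpt} applies verbatim and gives stability. For case i) ($\overline{\kappa} > 0$) I would first establish compactness: because $\psi$ is totally geodesic, the Gauss equation (\ref{gauss}) reduces to $\mathrm{Riem} = \overline{\mathrm{Riem}}$, so $(M,g)$ has constant sectional curvature $\overline{\kappa} > 0$ and hence $\mathrm{Ric} \geq (m-1)\overline{\kappa} > 0$; by the Bonnet-Myers theorem $M$ is compact. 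Instability then follows by testing the Rayleigh quotient (\ref{lambdastab}) with the constant function $\phi \equiv 1$: since the potential equals $\overline{\mathrm{Ric}}(N,N) + \mathrm{trace}(A^2) = -m\overline{\kappa}$, we obtain $\lambda_1^L(M) \leq -m\overline{\kappa} < 0$.

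For case iii) ($\overline{\kappa} < 0$) stability is again a direct application of Theorem \ref{teocpt}: for any timelike $Z$ one has $\overline{g}(Z,Z) < 0$, whence $\overline{\mathrm{Ric}}(Z,Z) = m\overline{\kappa}\,\overline{g}(Z,Z) > 0$, and the totally geodesic conclusion in the compact subcase is the second assertion of that same theorem. The first bound in (\ref{Abd}) is just the universal bound above, since $\max\{0, -m\overline{\kappa}\} = -m\overline{\kappa}$ when $\overline{\kappa} < 0$. For the scalar curvature bound I would insert $\overline{\mathrm{S}} = m(m+1)\overline{\kappa}$ and $\overline{\mathrm{Ric}}(N,N) = -m\overline{\kappa}$ into the maximal scalar-curvature identity (\ref{scalmax}) to get $\mathrm{S} = m(m-1)\overline{\kappa} + \mathrm{trace}(A^2)$, and then use $\mathrm{trace}(A^2) \leq -m\overline{\kappa}$ to conclude $\mathrm{S} \leq (m-2)m\overline{\kappa}$.

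The only step that is not an immediate consequence of the earlier theorems is the compactness in case i); everything else is substitution. The potential obstacle there is justifying the appeal to Bonnet-Myers, but this is harmless once one observes that total geodesy forces the induced metric to have constant positive curvature via the Gauss equation. No genuine analytic difficulty arises, since all three stability statements reduce to the sign of the potential $\overline{\mathrm{Ric}}(N,N) + \mathrm{trace}(A^2)$, which is controlled purely by the sign of $\overline{\kappa}$ through $\overline{\mathrm{Ric}}(N,N) = -m\overline{\kappa}$ and $\mathrm{trace}(A^2) \geq 0$.
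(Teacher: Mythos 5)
Your proposal is correct and follows essentially the same route as the paper: extract $c_1=-m\overline{\kappa}$, $c_2=\overline{\kappa}$ to invoke Theorem \ref{teoni2} for the bound $\mathrm{trace}(A^2)\leq\max\{0,-m\overline{\kappa}\}$ and the totally geodesic conclusion when $\overline{\kappa}\geq0$, apply Theorem \ref{teocpt} for stability when $\overline{\kappa}\leq0$, use Bonnet--Myers plus the constant test function in the Rayleigh quotient for case i), and substitute into \rf{scalmax} for the scalar curvature bound. No discrepancies with the paper's argument.
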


\begin{demo}
	We have $\overline{\mathrm{Ric}}(Z,Z) = -m\overline{\kappa}$ for all unit timelike vectors $Z\in T\overline{M}$, so conditions (\ref{ricbound1}) and (\ref{sectbound1}) are satisfied with $c_1=-m\overline{\kappa}$, $c_2=\overline{\kappa}$. Hence, $c_1 + 2mc_2=m\overline{\kappa}$ and by Theorem \ref{teoni2} we have $\mathrm{trace}(A^2)\leq\max\{0,-m\overline{\kappa}\}$. If $\overline{\kappa}\geq0$ it follows that $\psi$ is totally geodesic, while for $\overline{\kappa}<0$ we obtain $\mathrm{trace}(A^2) \leq -m\overline{\kappa}$, that by (\ref{scalmax}) is equivalent to saying that the scalar curvature $\mathrm{S}$ of $M$ satisfies
	$$ \mathrm{S} = (m-1)m\overline{\kappa} + \mathrm{trace}(A^2) \leq (m-2)m\overline{\kappa}. $$
	If $\overline{\kappa}>0$ then $M$ has constant positive sectional curvature $\overline{\kappa}$ and therefore it must be compact by the Bonnet-Myers theorem. Since $\psi$ is totally geodesic, $\overline{\mathrm{Ric}}(N,N)+\mathrm{trace}(A^2)=-m\overline{\kappa}<0$ on $M$ and the constant, compactly supported function $\phi\equiv1$ verifies
	$$ \frac{\int_{M}|\nabla\phi|^2 + \left( \overline{\mathrm{Ric}}(N,N)+\mathrm{trace}(A^2) \right) \phi^2}{\int_M \phi^2} = -m\overline{\kappa} < 0, $$
	implying $\lambda_1^L (M)<0$ by (\ref{lambdastab}). When $\overline{\kappa} \leq 0$ we have $\overline{\mathrm{Ric}}(Z,Z) = -m\overline{\kappa} \geq 0$ for every unit timelike vector $Z\in T\overline{M}$, so the other statements are direct consequences of Theorem \ref{teocpt}.
\end{demo}

The following Theorem \ref{teosup1} is a refinement of Theorem \ref{teocsc} for maximal surfaces in $3$-dimensional spacetimes of constant sectional curvature. Let us recall from (\ref{scalmax}) that the Gaussian curvature $\mathrm{K}$ of such a surface satisfies
\begin{equation}
	\label{sca}
	\tcr{2\mathrm{K}} = \mathrm{S} = \overline{\mathrm{S}} + 2 \ \overline{\mathrm{Ric}}(N, N) + \mathrm{trace}(A^2),
\end{equation}
\noindent where $\overline{\mathrm{S}}$ denotes the scalar curvature of the ambient spacetime $\overline{M}$.

\begin{teor}
	\label{teosup1}
	Let $\psi:M\to\overline{M}$ be a complete maximal \tcred{oriented} surface in a $3$-dimensional spacetime $\overline{M}$ of constant sectional curvature $\overline{\kappa}$.
	\begin{itemize}[noitemsep,nolistsep]
		\item [i)] If $\overline{\kappa}>0$ then $M$ is a totally geodesic, unstable round sphere of constant curvature $\overline{\kappa}$.
		\item [ii)] If $\overline{\kappa}=0$ then $M$ is totally geodesic, stable and it is either a Euclidean plane, a flat cylinder, or a flat torus.
		\item [iii)] If $\overline{\kappa}<0$ then $M$ is stable and has non-positive Gaussian curvature. If $M$ is compact then it is totally geodesic and its Euler characteristic satisfies
		\begin{equation}
			\label{eulchar}
			\chi(M) = \frac{\overline{\kappa}}{2\pi}\,\mathrm{Vol}(M).
		\end{equation}
		If $M$ is non-compact but its total curvature and its Euler characteristic are finite, then
		\begin{equation}
			\label{eulcharbis}
			\chi(M) \geq \frac{\overline{\kappa}}{2\pi}\,\mathrm{Vol}(M).
		\end{equation}
	\end{itemize}
\end{teor}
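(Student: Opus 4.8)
The plan is to reduce all three cases to a single pointwise identity coming from (\ref{sca}) together with the conclusions already obtained in Theorem \ref{teocsc}. Specializing the constant-curvature formulas of Section \ref{gausseq} to $m=2$, we have $\overline{\mathrm{S}} = 6\overline{\kappa}$ and, since $N$ is unit timelike so that $\overline{g}(N,N)=-1$, also $\overline{\mathrm{Ric}}(N,N) = m\overline{\kappa}\,\overline{g}(N,N) = -2\overline{\kappa}$. Substituting into (\ref{sca}) gives
\begin{equation*}
	\mathrm{K} = \overline{\kappa} + \tfrac{1}{2}\mathrm{trace}(A^2).
\end{equation*}
This identity controls the sign and the integral of $\mathrm{K}$ throughout, while the stability (resp. instability) assertions and the totally geodesic conclusions are inherited directly from Theorem \ref{teocsc}.

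In case i), Theorem \ref{teocsc} gives that $M$ is compact and totally geodesic, whence $A\equiv0$ and $\mathrm{K}\equiv\overline{\kappa}>0$. A compact, complete, oriented surface of constant curvature $\overline{\kappa}>0$ is a quotient of the round sphere $\mathbb{S}^2(\overline{\kappa})$ by a group of isometries acting freely; since the only nontrivial such quotient, $\mathbb{RP}^2$, is non-orientable, $M$ is the round sphere itself, and its instability is again part of Theorem \ref{teocsc}. In case ii), Theorem \ref{teocsc} gives $A\equiv0$ and hence $\mathrm{K}\equiv0$, so $M$ is a complete flat oriented surface; I would then invoke the classification of such surfaces as quotients $\mathbb{R}^2/\Gamma$ with $\Gamma$ a free, properly discontinuous group of orientation-preserving isometries, i.e. a lattice of translations of rank $0$, $1$ or $2$, which yields respectively the Euclidean plane, a flat cylinder, or a flat torus.

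In case iii), the bound $\mathrm{trace}(A^2)\leq -m\overline{\kappa} = -2\overline{\kappa}$ from (\ref{Abd}), combined with the identity above, gives $\overline{\kappa}\leq\mathrm{K}\leq0$ pointwise, so $M$ has non-positive Gaussian curvature. If $M$ is compact it is totally geodesic by Theorem \ref{teocsc}, so $\mathrm{K}\equiv\overline{\kappa}$ and the Gauss--Bonnet theorem $\int_M\mathrm{K} = 2\pi\chi(M)$ yields (\ref{eulchar}). If $M$ is non-compact, then since $\mathrm{K}\leq0$ the finiteness of the total curvature means $\int_M(-\mathrm{K})<+\infty$, so the Cohn--Vossen inequality in Huber's form for complete surfaces with $\int_M\mathrm{K}^-<+\infty$ gives $\int_M\mathrm{K}\leq 2\pi\chi(M)$. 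On the other hand, integrating the identity, $\int_M\mathrm{K} = \overline{\kappa}\,\mathrm{Vol}(M) + \tfrac{1}{2}\int_M\mathrm{trace}(A^2)\geq\overline{\kappa}\,\mathrm{Vol}(M)$ when $\mathrm{Vol}(M)<+\infty$ (the inequality (\ref{eulcharbis}) being trivial otherwise, since its right-hand side is then $-\infty$), and chaining the two estimates produces (\ref{eulcharbis}).

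The curvature substitutions, Gauss--Bonnet, and the surface classifications are routine. The step demanding most care is the non-compact part of iii): one must correctly appeal to the Cohn--Vossen/Huber inequality under the finite-total-curvature hypothesis and verify that the splitting of $\int_M\mathrm{K}$ into a volume term and a curvature term is legitimate, which is why the case $\mathrm{Vol}(M)=+\infty$ should be isolated separately. I expect this analytic input to be the main obstacle, the remainder being bookkeeping with the identity above.
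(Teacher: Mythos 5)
Your proposal is correct and follows essentially the same route as the paper: the identity $\mathrm{K}=\overline{\kappa}+\tfrac{1}{2}\mathrm{trace}(A^2)$ from (\ref{sca}), the conclusions of Theorem \ref{teocsc}, Gauss--Bonnet in the compact cases, and Cohn--Vossen in the non-compact part of iii). The only (harmless) deviations are that in case i) you identify the sphere via the classification of orientable spherical space forms where the paper computes $\chi(M)>0$ by Gauss--Bonnet, and that you spell out more explicitly than the paper the chain $2\pi\chi(M)\geq\int_M\mathrm{K}\geq\overline{\kappa}\,\mathrm{Vol}(M)$ together with the trivial infinite-volume case.
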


\begin{demo}
	By formula (\ref{sca}), the Gaussian curvature $\mathrm{K}$ of $M$ always satisfies
	$$
		\mathrm{K} = \frac{1}{2}\,\overline{\mathrm{S}} + \overline{\mathrm{Ric}}(N,N) + \frac{1}{2}\,\mathrm{trace}(A^2) = \overline{\kappa} + \frac{1}{2}\,\mathrm{trace}(A^2) \geq \overline{\kappa}.
	$$
	If $\overline{\kappa} > 0$ then, in view of Theorem \ref{teocsc}, we only have to show that $M$ is a topological sphere. Since $M$ is a compact surface of constant Gaussian curvature $\mathrm{K}=\overline{\kappa}$, by the Gauss-Bonnet theorem the Euler characteristic $\chi(M)$ satisfies
	$$ \chi(M) = \frac{1}{2\pi}\int_M \mathrm{K} = \frac{\overline{\kappa}}{2\pi}\,\mathrm{Vol}(M) > 0. $$
	Since $\chi(M)=2-2g$ with $g$ the topological genus of $M$, we conclude that $g=0$ and $M$ is a topological sphere.
	
	If $\overline{\kappa} = 0$ then $\psi$ is totally geodesic by Theorem \ref{teocsc} and $M$ is a flat surface. Note that all of the three cases described in point $ii)$ of the statement of the theorem can occur, for example when $M$ is a spacelike slice of a Lorentzian product $\overline{M} = \mathbb{R} \times F$ with metric $\overline{g} = -dt^2 + g_F$ and $(F,g_F)$ is a flat surface of one of the three above types.
	
	If $\overline{\kappa}<0$ and $M$ is compact, we obtain (\ref{eulchar}) by applying again the Gauss-Bonnet theorem. If $M$ is non-compact but has finite total curvature and finite Euler characteristic, inequality (\ref{eulcharbis}) follows by using the Cohn-Vossen's inequality (see page 86 in \cite{Os})
	$$ \chi(M) \geq \frac{1}{2\pi}\int_M \mathrm{K}, $$
	which is valid under our assumptions.
\end{demo}

We conclude this section by restating and proving Theorem \ref{intro:teosup2} from the Introduction under slightly more general hypotheses, see Remark \ref{ricnonneg} below.

\begin{teor}
	\label{teoindex1}
	Let $\overline{M}$ be a $3$-dimensional spacetime satisfying $\overline{\mathrm{S}} + \overline{\mathrm{Ric}}(Z,Z) \geq 0$ for every unit timelike vector $Z\in T\overline{M}$ and let $\psi: M \longrightarrow \overline{M}$ be a complete stable \tcred{oriented} maximal surface. If $\psi$ is stable at infinity and $\mathrm{K}_+ \in L^1(M)$, then also $\mathrm{K}_{-}\in L^1(M)$, where $\mathrm{K}_+$ and $\mathrm{K}_-$ are the positive and negative parts of the Gaussian curvature $\mathrm{K}$ of $M$.
\end{teor}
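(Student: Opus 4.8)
The plan is to localize to an end of $M$ where stability holds, convert the stability inequality into an estimate for the negative part of the curvature by means of the Gauss equation, and finally remove the test function by a conformal-parabolicity argument. First I would use that $\psi$ is stable at infinity: there is a relatively compact open set $\Omega\subseteq M$ with $\lambda_1^L(M\setminus\overline{\Omega})\geq0$, so that, by the variational characterization (\ref{lambdastab}),
\begin{equation*}
	\int_{M\setminus\overline{\Omega}}|\nabla\phi|^2 + \left(\overline{\mathrm{Ric}}(N,N)+\mathrm{trace}(A^2)\right)\phi^2 \geq 0 \qquad \text{for every } \phi\in C_c^\infty(M\setminus\overline{\Omega}).
\end{equation*}
Since $m=2$ we have $2\mathrm{K}=\mathrm{S}$, and the Gauss equation (\ref{sca}) rewrites the potential as $\overline{\mathrm{Ric}}(N,N)+\mathrm{trace}(A^2) = 2\mathrm{K} - \overline{q}$, where $\overline{q} := \overline{\mathrm{S}}+\overline{\mathrm{Ric}}(N,N)\geq0$ by hypothesis since $N$ is unit timelike. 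Substituting and writing $\mathrm{K}=\mathrm{K}_+-\mathrm{K}_-$ I obtain the key estimate
\begin{equation*}
	\int_{M\setminus\overline{\Omega}} 2\mathrm{K}_-\,\phi^2 \leq \int_{M\setminus\overline{\Omega}} |\nabla\phi|^2 + 2\int_{M\setminus\overline{\Omega}}\mathrm{K}_+\,\phi^2 \qquad \text{for every } \phi\in C_c^\infty(M\setminus\overline{\Omega}),
\end{equation*}
having discarded the favourable term $\int\overline{q}\,\phi^2\geq0$. As $\mathrm{K}_+\in L^1(M)$, the statement follows once I produce cut-offs $\phi_R$, $0\leq\phi_R\leq1$, converging to $1$ on $M\setminus\overline{\Omega}$ with $\int|\nabla\phi_R|^2$ bounded (ideally infinitesimal).

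The construction of such cut-offs is the heart of the matter, and here I would exploit two-dimensionality. Applying Lemma \ref{lemeq} to the end I get $u>0$ on $M\setminus\overline{\Omega}$ solving $Lu=0$, i.e. $\Delta u = (2\mathrm{K}-\overline{q})u$. Consider the conformal metric $\widetilde{g}=u\,g$: computing the Gaussian curvature under a conformal change, and using $\Delta\log u = (2\mathrm{K}-\overline{q})-|\nabla\log u|^2$, gives
\begin{equation*}
	\widetilde{\mathrm{K}} = u^{-1}\left(\mathrm{K}-\tfrac12\Delta\log u\right) = \tfrac12\, u^{-1}\left(\overline{q}+|\nabla\log u|^2\right) \geq 0.
\end{equation*}
Thus $(M\setminus\overline{\Omega},\widetilde{g})$ is a surface of nonnegative Gaussian curvature. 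Since in dimension two the Dirichlet integral $\int|\nabla\phi|^2$ is a conformal invariant, the cut-offs may equally be estimated in $\widetilde{g}$; and a complete end of nonnegative curvature is parabolic, hence admits cut-offs $\phi_R\to1$ with $\int|\nabla\phi_R|_{\widetilde g}^2=\int|\nabla\phi_R|_g^2\to0$. Feeding these into the key estimate and letting $R\to+\infty$ (the inner boundary $\partial\Omega$ contributes only a fixed finite amount of energy, which is harmless) yields $\int_{M\setminus\overline{\Omega}}\mathrm{K}_-<+\infty$; adding the integral over the relatively compact $\overline{\Omega}$ gives $\mathrm{K}_-\in L^1(M)$.

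The main obstacle is precisely this parabolicity step: to invoke ``nonnegative curvature $\Rightarrow$ parabolic'' I must control the completeness, or at least the conformal type, of the end in the metric $\widetilde{g}=u\,g$, which is not automatic since $u$ may degenerate at infinity. I expect this to be settled by taking $u$ to be the minimal positive solution associated with $\lambda_1^L(M\setminus\overline{\Omega})\geq0$ and verifying that the conformal end has finite conformal type, so that parabolicity can be combined with the conformal invariance of the energy; alternatively, one shows directly that $(M,g)$ has at most quadratic area growth, which already makes the logarithmic cut-offs satisfy $\int|\nabla\phi_R|^2\to0$. Either way the genuine analytic content is confined to this last step, the remainder being a formal manipulation of the Gauss equation and the stability inequality.
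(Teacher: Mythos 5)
Your overall strategy is sound and runs parallel to the paper's: both arguments hinge on conformally rescaling the end by a positive solution of a Jacobi-type equation so that the new metric has nonnegative Gaussian curvature, and then exploiting parabolicity. Your algebra is correct: the identity $\overline{\mathrm{Ric}}(N,N)+\mathrm{trace}(A^2)=2\mathrm{K}-\overline{q}$ with $\overline{q}\geq0$, the resulting estimate $\int 2\mathrm{K}_-\phi^2\leq\int|\nabla\phi|^2+2\int\mathrm{K}_+\phi^2$, and the computation $\widetilde{\mathrm{K}}=\tfrac{1}{2}u^{-1}\left(\overline{q}+|\nabla\log u|^2\right)\geq0$ are all right. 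Moreover, the cut-off/capacity argument you propose is a legitimate, arguably more elementary, substitute for the oscillation-theoretic contradiction the paper runs after establishing $1/\mathrm{Vol}(\partial B_r)\notin L^1(+\infty)$: parabolicity is equivalent to the existence of exhausting cut-offs with vanishing Dirichlet energy, and that energy is conformally invariant in dimension two, so your bounded-energy test functions would indeed finish the proof.

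The genuine gap is exactly the one you flag and do not close: completeness of the conformal metric. Without it, ``nonnegative curvature $\Rightarrow$ parabolic'' is simply false (a flat open disk is nonnegatively curved and non-parabolic), so this step cannot be waved through. Neither of your proposed fixes works as stated: quadratic area growth of $(M,g)$ is not available, since $2\mathrm{K}=\overline{q}+\overline{\mathrm{Ric}}(N,N)+\mathrm{trace}(A^2)$ need not be nonnegative ($\overline{\mathrm{Ric}}(N,N)$ may be very negative), and ``take $u$ minimal and verify finite conformal type'' is a hope, not an argument. The missing ingredient is Fischer-Colbrie's theorem (Lemma \ref{confsup} of the paper, Theorem 1 of \cite{FC}): any positive $w$ with $\Delta w\leq\mathrm{K}w$ outside a compact set makes $(M,w^2g)$ complete. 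It does apply to your setup --- writing $\widetilde{g}=ug=(\sqrt{u})^2g$ and using $\Delta u=(2\mathrm{K}-\overline{q})u$ one checks that $w=\sqrt{u}$ satisfies $\Delta w=\tfrac12 u^{-1/2}\Delta u-\tfrac14 u^{-3/2}|\nabla u|^2\leq\mathrm{K}w$ --- but it must be invoked explicitly; it is the analytic heart of the proof. (The paper sidesteps the square root by first observing that $\lambda_1^{\Delta-\mathrm{K}}\geq0$ on the end, taking $u$ there with $\Delta u=\mathrm{K}u$, and rescaling by $u^2$.)
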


\noindent To prove Theorem \ref{teoindex1} we need the following result due to Fischer-Colbrie, see Theorem 1 of \cite{FC}.

\begin{lema}
	\label{confsup}
	Let $(M,g)$ be a complete Riemann surface with Gaussian curvature $\mathrm{K}$. If $u\in C^{\infty}(M)$ is a positive function such that $\Delta u\leq\mathrm{K}u$ on $M\setminus\overline{\Omega}$ for some relatively compact open set $\Omega\subseteq M$, then $(M,u^2g)$ is complete.
\end{lema}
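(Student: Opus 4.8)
The plan is to work intrinsically on the conformal metric $\widetilde{g}=u^{2}g$ and to reduce the completeness of $(M,\widetilde g)$ to a curvature statement on the end $M\setminus\overline{\Omega}$. First I would note that completeness is a property of the behaviour at infinity: on the compact set $\overline{\Omega}$ the positive continuous function $u$ is bounded away from $0$ and $+\infty$, so $\widetilde g$ and $g$ are uniformly comparable there, and it therefore suffices to prove that every $g$-divergent curve $\gamma$ (one eventually leaving every compact subset of $M$, which exists precisely because $(M,g)$ is complete) has infinite $\widetilde g$-length $\int_\gamma u\,ds=+\infty$. Since the relevant estimate along a single curve would require controlling the full Hessian and not just the second derivative of $u$ in the tangent direction, a genuinely two-dimensional argument is needed, which is what motivates passing to the conformal curvature.

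The key computation is the transformation law of the Gaussian curvature under the conformal change $\widetilde g=u^{2}g=e^{2\log u}g$ in dimension two,
\[
\widetilde{\mathrm K}=u^{-2}\!\left(\mathrm K-\Delta\log u\right)=u^{-2}\!\left(\mathrm K-\frac{\Delta u}{u}+\frac{|\nabla u|^{2}}{u^{2}}\right).
\]
Since the hypothesis $\Delta u\le\mathrm K u$ gives $\mathrm K-\Delta u/u\ge 0$ on $M\setminus\overline{\Omega}$, I obtain
\[
\widetilde{\mathrm K}\ \ge\ \frac{|\nabla u|^{2}}{u^{4}}\ \ge\ 0\qquad\text{on }M\setminus\overline{\Omega},
\]
so $(M,\widetilde g)$ carries a metric of nonnegative Gaussian curvature outside a compact set. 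The problem is thus reduced to the statement that a $g$-complete surface admitting a conformal metric of nonnegative curvature near infinity is itself complete in that conformal metric.

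For this last, crucial step I would argue by contradiction using a length--area (Gauss--Bonnet) monotonicity on each end. Assuming $(M,\widetilde g)$ incomplete, there is a $g$-divergent sequence whose $\widetilde g$-distance from $\overline\Omega$ stays bounded by some finite $R$; writing $r=d_{\widetilde g}(\cdot,\overline\Omega)$ and letting $L(t)$ denote the $\widetilde g$-length of $\{r=t\}$, the first variation of arc length together with Gauss--Bonnet on the annular region $\{t_0<r<t\}$ gives
\[
\int_{\{r=t\}}\kappa_g\,d\widetilde s=\int_{\{r=t_0\}}\kappa_g\,d\widetilde s-\int_{\{t_0<r<t\}}\widetilde{\mathrm K}\,d\widetilde A\le c_0,
\]
whence $L'(t)\le c_0$ and both $L$ and the $\widetilde g$-area $A(t)=\int L$ stay bounded as $t\uparrow R$. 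I expect the genuine obstacle to lie exactly here: in the incomplete case the sublevel sets of $r$ need not be relatively compact, so Gauss--Bonnet cannot be applied naively, and the contradiction between "finite $\widetilde g$-radius and area of the end" on one side and "$g$-completeness, i.e. infinite $g$-extent, of the same end" on the other must be extracted through a careful exhaustion of the end, controlling its topology (annular ends, $\chi\le 0$) and the ideal boundary of the $\widetilde g$-metric completion. This is precisely the technical core of the argument in \cite{FC}; alternatively, one may route the contradiction through the structure theory of surfaces with $\int_M \mathrm K^{-}<\infty$ (Cohn-Vossen type results) applied directly to $(M,\widetilde g)$.
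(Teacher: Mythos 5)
You should first be aware that the paper does not prove Lemma \ref{confsup}: it is quoted as Theorem 1 of \cite{FC} (Fischer-Colbrie), so there is no internal argument to compare yours against and your attempt must stand on its own. Your first two steps do stand: the Hopf--Rinow reduction to showing that every divergent curve has infinite $\widetilde{g}$-length is correct, and so is the computation $\widetilde{\mathrm{K}}=u^{-2}(\mathrm{K}-\Delta\log u)\geq|\nabla u|^2/u^4\geq0$ on $M\setminus\overline{\Omega}$ (the paper itself performs it inside the proof of Theorem \ref{teoindex1}).

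The genuine gap is your third step, and it is not merely the admitted absence of detail: the statement you reduce the lemma to --- that a $g$-complete surface carrying a conformal metric of nonnegative Gaussian curvature near infinity is complete in that conformal metric --- is false. Take $M=\mathbb{R}^2$ with the flat metric $g$ and $u=e^{-|x|^2}$. Then
\[
\widetilde{\mathrm{K}}=u^{-2}\left(0-\Delta\bigl(-|x|^2\bigr)\right)=4e^{2|x|^2}>0\qquad\text{on all of }\mathbb{R}^2,
\]
yet the radial ray has $\widetilde{g}$-length $\int_0^{\infty}e^{-r^2}\,dr<\infty$, so $(M,u^2g)$ is incomplete. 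This $u$ does not, of course, satisfy $\Delta u\leq\mathrm{K}u=0$ outside a compact set (indeed $\Delta u>0$ for $|x|>1$), which is exactly the point: replacing the hypothesis $\Delta u\leq\mathrm{K}u$ by its single scalar consequence $\widetilde{\mathrm{K}}\geq0$ discards the information that makes the lemma true, so the Gauss--Bonnet/length--area scheme you sketch cannot be completed no matter how carefully the exhaustion and the topology of the ends are handled. Fischer-Colbrie's proof uses the full differential inequality --- equivalently, via Lemma \ref{lemeq}, the nonnegativity of the quadratic form $\phi\mapsto\int\bigl(|\nabla\phi|^2+\mathrm{K}\phi^2\bigr)$ on $C_c^{\infty}(M\setminus\overline{\Omega})$ certified by the positive supersolution $u$ --- and derives the contradiction by feeding a test function built from a divergent curve of finite $\widetilde{g}$-length into that form. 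Any repair of your outline must keep this spectral information in play rather than only the sign of $\widetilde{\mathrm{K}}$.
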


\begin{proof}[Proof of Theorem \ref{teoindex1}]
	If $M$ is compact, then it has finite total curvature and we are done. Hence, suppose that $M$ is complete and non-compact. By (\ref{sca}) and since $\overline{\mathrm{S}} + \overline{\mathrm{Ric}}(N,N) \geq 0$, the Gaussian curvature of $M$ satisfies $2\mathrm{K} \geq \overline{\mathrm{Ric}}(N,N) + \mathrm{trace}(A^2)$. Let $\Omega_0\subseteq M$ be a relatively compact open set such that the stability operator $L = \Delta - \left( \overline{\mathrm{Ric}}(N,N) + \mathrm{trace}(A^2) \right)$ satisfies $\lambda_1^L(M\setminus\overline{\Omega_0})\geq0$. By the variational characterization (\ref{lambdastab}), for every $\phi\in C^{\infty}(M\setminus\overline{\Omega_0})$ we have
	\begin{align*}
		\int_{M\setminus\overline{\Omega_0}} |\nabla\phi|^2 + \mathrm{K}\phi^2 & \geq \frac{1}{2}\int_{M\setminus\overline{\Omega_0}} |\nabla\phi|^2 + 2\mathrm{K}\phi^2 \\
		& \geq \frac{1}{2}\int_{M\setminus\overline{\Omega_0}} |\nabla\phi|^2 + \left( \overline{\mathrm{Ric}}(N,N) + \mathrm{trace}(A^2) \right)\phi^2 \geq 0
	\end{align*}
	so the operator $\Delta-\mathrm{K}$ satisfies $\lambda_1^{\Delta-\mathrm{K}}(M\setminus\overline{\Omega_0})\geq0$	and by Lemma \ref{lemeq} there exists a positive solution $u_0$ of $\Delta u_0=\mathrm{K}u_0$ on $M\setminus\overline{\Omega_0}$. By standard elliptic regularity results, $u_0$ is smooth. Let $\Omega\subseteq M$ be a relatively compact open set such that $\overline{\Omega_0}\subseteq \Omega$ and let $u\in C^{\infty}(M)$ be a positive function such that $u=u_0$ on $M\setminus\overline{\Omega}$, so that
	\begin{equation}
		\label{Deltau}
		\Delta u = \mathrm{K} u \qquad \text{on } M\setminus\overline{\Omega}.
	\end{equation}
	By Lemma \ref{confsup}, $M$ is complete in the conformally deformed metric $\tilde{g} = u^2g$ and the Gaussian curvature $\tilde{\mathrm{K}}$ of $(M,\tilde{g})$ is nonnegative on $M\setminus \Omega$ since
	\begin{equation}
		\tilde{\mathrm{K}} = \frac{1}{u^2}(\mathrm{K} - \Delta\log u) = \frac{\mathrm{K}u - \Delta u}{u^3} + \frac{|\nabla u|^2}{u^4}.
	\end{equation}
	
	Let $o\in M$ be a given point and let $B_r$ denote the geodesic ball of $(M,\tilde{g})$ centered at $o$ with radius $r$. Completeness of $(M,\tilde{g})$ by the Hopf-Rinow theorem enables us to choose $R>0$ sufficiently large so that $\Omega\subseteq B_R$. As $\tilde{K}\geq0$ on $M\setminus B_R$, by the volume comparison theorem there exists $C>0$ such that $\mathrm{Vol}(\partial B_r) \leq Cr$ for a.e. $r\geq R$, with $\mathrm{Vol}(\partial B_r)$ the $1$-dimensional Hausdorff measure of $\partial B_r$ induced by $\tilde{g}$. Note that this is well defined for a.e. $r>0$. Thus
	\begin{equation}
		\label{igc15}
		\frac{1}{\mathrm{Vol}(\partial B_r)} \notin L^1(+ \infty).
	\end{equation}
	We let $w(r) = \mathrm{Vol}(\partial B_r)$ and we set 
	\begin{equation}
		\label{igc17}
		A(r) = \frac{1}{w(r)} \int_{\partial B_r} - \frac{\mathrm{K}}{u^2} \, d\mathrm{Vol}_{\tilde{g}}.
	\end{equation}
	We then consider the Cauchy problem
	\begin{equation}
		\label{CP}
		\begin{cases}
			(w(r)z')' + A(r)w(r)z = 0 & \text{on } \mathbb R^+ \\
			z(0^+) = 1, \quad (wz')(0^+) = 0.
		\end{cases}
	\end{equation}
	Applying Proposition 4.2 of \cite{BMR} we deduce the existence of a weak solution $z$ of (\ref{CP}). We now reason by contradiction and we suppose that $\mathrm{K}_+ \in L^1(M,d\mathrm{Vol}_g)$ while $\mathrm{K}_- \notin L^1(M,d\mathrm{Vol}_g)$. Then
	\begin{equation}
		\lim_{r\to+\infty} \int_{B_r} \mathrm{K} \, d\mathrm{Vol}_g = -\infty.
	\end{equation}
	By the coarea formula,
	\begin{equation}
		\label{igc18}
		\int_0^r A(s) w(s) ds = \int_{B_r} - \frac{\mathrm{K}}{u^2} \, d\mathrm{Vol}_{\tilde{g}} = \int_{B_r} - \mathrm{K} \, d\mathrm{Vol}_g
	\end{equation}
	and therefore
	\begin{equation}
		\label{nonL1}
		\lim_{r\to+\infty} \int_0^r A(s)w(s) ds = +\infty.
	\end{equation}
	This, together with (\ref{igc15}) implies, by Corollary 2.9 of \cite{MMR}, that $z$ is oscillatory. Let $R\leq R_1<R_2$ be two consecutive zeros of $z$ such that $z>0$ on $(R_1,R_2)$. Define a function $\varphi\in\mathrm{Lip}_0 (\overline{B_{R_2}}\setminus B_{R_1})$ by setting $\varphi(x) := z(r(x))$ for each $x\in B_{R_2}\setminus \overline{B_{R_1}}$, with $r(x)$ the distance from $x$ to $o$ in the metric $\tilde{g}$. By the coarea formula and (\ref{igc17})
	\begin{equation}
		\label{igc19}
		\begin{split}
		\int_{B_{R_2}\setminus \overline{B_{R_1}}}|\tilde{\nabla}\varphi|_{\tilde{g}}^2 \, d\mathrm{Vol}_{\tilde{g}} & + \int_{B_{R_2}\setminus \overline{B_{R_1}}} \frac{\mathrm{K}}{u^2}\varphi^2 \, d\mathrm{Vol}_{\tilde{g}} = \\
		& = \int_{R_1}^{R_2} w(s)z'(s)^2 ds - \int_{R_1}^{R_2} A(s)w(s)z(s)^2 ds.
		\end{split}
	\end{equation}
	Since $z$ is a weak solution of (\ref{CP}), using $z\,\chi_{[R_1,R_2]} \in \mathrm{Lip}_0([R_1,R_2])$ as a test function we get
	\begin{equation}
		\label{igc20}
		\tcr{\int_{R_1}^{R_2} w(s)z'(s)^2 ds = \int_{R_1}^{R_2} A(s)w(s)z(s)^2 ds.}
	\end{equation}
	\tcr{Collecting (\ref{igc19}) and (\ref{igc20}), by the monotonicity property of eigenvalues we obtain}
	$$
		\tcr{\lambda_1^{\tilde{L}} (M \setminus \overline{\Omega}) < \lambda_1^{\tilde{L}} (B_{R_2}\setminus \overline{B_{R_1}}) \leq \frac{\int_{B_{R_2}\setminus \overline{B_{R_1}}} |\tilde{\nabla}\varphi|_{\tilde{g}}^2 \, d\mathrm{Vol}_{\tilde{g}} + \int_{B_{R_2}\setminus \overline{B_{R_1}}} \frac{\mathrm{K}}{u^2} \varphi^2 \, d\mathrm{Vol}_{\tilde{g}} }{\int_{B_{R_2}\setminus \overline{B_{R_1}}}\varphi^2} = 0,}
	$$
	\tcr{where $\tilde{L}$ is the operator defined by}
	$$
		\tcr{\tilde{L} = \tilde{\Delta} - \frac{\mathrm{K}}{u^2}},
	$$
	with $\tilde{\Delta}$ the Laplace-Beltrami operator of $(M,\tilde{g})$. Since $u^2 \tilde{\Delta} f = \Delta f$ on $M$ for every $f\in C^2(M)$, we see from (\ref{Deltau}) that $u$ is a positive solution of $\tilde{L}u = 0$ on $M\setminus\overline{\Omega}$, so $\lambda_1^{\tilde{L}} (M \setminus \overline{\Omega})\geq0$ by Lemma \ref{lemeq} and we have reached the desired contradiction.
\end{proof}

\begin{rem}
	\label{ricnonneg}
	If $\{E_1,E_2,Z\}$ is a local Lorentz orthonormal frame on $T\overline{M}$ and $Z$ is timelike then $\overline{\mathrm{S}}+\overline{\mathrm{Ric}}(Z,Z) = \overline{\mathrm{Ric}}(E_1,E_1) + \overline{\mathrm{Ric}}(E_2,E_2)$, so Theorem \ref{intro:teosup2} is indeed a consequence of Theorem \ref{teoindex1}.
\end{rem}

\section{Maximal hypersurfaces in GRW spacetimes} \label{segrw}

\tcr{Let $(F,g_{F})$ be an $m$-dimensional (connected)
Riemannian manifold, $m\geq2$, $I$ an open interval in $\mathbb{R}$ endowed with the metric $-dt^2$ and $\rho$ a
positive smooth function defined on $I$. The Generalized Robertson-Walker (GRW) spacetime $\overline{M}=I\times_{\rho} F$, with fiber $(F,g_F)$, base $(I,-dt^2)$ and warping function $\rho$, is the product manifold $\overline{M}=I \times F$ endowed with the Lorentzian metric}
\begin{equation}
	\label{metrica}
	\tcr{\overline{g} = -\pi^*_{I} (dt^2) +\rho(\pi_I)^2 \, \pi_{F}^* (g_{F}),}
\end{equation}
where, respectively, $\pi_{I}$ and $\pi_{F}$ denote the projections from $\overline{M}$ onto $I$ and $F$. If the fiber has constant sectional curvature, $\overline{M}$ is simply called a Robertson-Walker spacetime.

In any GRW spacetime $\overline{M}=I\times_\rho F$, the
coordinate vector field $\partial_t:=\partial/\partial t$ is
a unit timelike vector field and hence $\overline{M}$ is time-orientable. With a slight abuse of notation, we write $\rho(t)$, $\rho'(t)$, $\rho''(t)$ to denote $\rho\circ\pi_I$, $\rho'\circ\pi_I$, $\rho''\circ\pi_I$. If we consider the timelike vector field
$$
	\tcr{T}: = \rho(\tcr{t})\,\partial_t,
$$
from the relation between the Levi-Civita connection of $\overline{M}$ and those of the base and
the fiber (see Corollary 7.35 of \cite{O'N}) it follows that
\begin{equation}
	\label{conexion} 
	\overline{\nabla}_X \tcr{T} = \rho'(\tcr{t}) X, 
\end{equation}
for any $X\in \mathfrak{X}(\overline{M})$, where $\overline{\nabla}$
is the Levi-Civita connection of the Lorentzian metric (\ref{metrica}). Thus, $\tcr{T}$ is conformal and its metrically equivalent $1$-form is closed, that is, $\tcr{T}$ is a closed conformal vector field. The curvature tensors of $\overline{M}$ are given by the following formulas.

\begin{lema}
	The GRW spacetime $\overline{M} = I\times_{\rho} F$ has Riemann and Ricci curvature tensors given by
	\begin{align}
		\label{riemGRW}
		\overline{\mathrm{Riem}} & = \rho(t)^2{\pi_F}^{\ast}\mathrm{Riem}^F + \rho(t)^2\left(\rho'(t)^2 - \rho(t)\rho''(t)\right){\pi_F}^{\ast}(g_F\KN g_F) + \frac{\rho''(t)}{\rho(t)}(\overline{g}\KN\overline{g}), \\
		\label{ricGRW}
		\overline{\mathrm{Ric}} & = {\pi_F}^{\ast}\mathrm{Ric}^F - (m-1)\left(\rho(t)\rho''(t) - \rho'(t)^2\right){\pi_F}^{\ast}g_F + m\frac{\rho''(t)}{\rho(t)}\overline{g},
	\end{align}
	where $\mathrm{Riem}^F$ and $\mathrm{Ric}^F$ are the Riemann and Ricci tensors of $(F,g_F)$ and $\KN$ denotes Kulkarni-Nomizu product.
\end{lema}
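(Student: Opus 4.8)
The plan is to specialize O'Neill's general curvature formulas for warped products (see Proposition 7.42 and Corollary 7.43 of \cite{O'N}) to the situation at hand, in which the base $(I,-dt^2)$ is one--dimensional and hence flat and the warping function is $\rho$. Throughout I would work with the orthogonal splitting of $T\overline{M}$ into vectors tangent to the base and vectors tangent to the fiber, and I would keep in mind that the curvature operator adopted here is the opposite of O'Neill's, so that every identity borrowed from \cite{O'N} must be sign--adjusted in accordance with the conventions fixed in Section \ref{gausseq}.

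The first step is to record the three data that feed into O'Neill's formulas. Since $\partial_t$ spans the base and $\overline{g}(\partial_t,\partial_t)=-1$, one has $dt=-\overline{g}(\partial_t,\cdot\,)$ and therefore $\mathrm{grad}\,\rho=-\rho'(t)\partial_t$, whence $\overline{g}(\mathrm{grad}\,\rho,\mathrm{grad}\,\rho)=-\rho'(t)^2$. Because the base is flat we have $\overline{\nabla}_{\partial_t}\partial_t=0$ (which is also read off directly from \rf{conexion} by writing $T=\rho(t)\partial_t$), so the Hessian reduces to $H^{\rho}(\partial_t,\partial_t)=\rho''(t)$. Once the base curvature is set to zero, these are the only quantities entering the warped product curvature identities.

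I would then evaluate O'Neill's formulas block by block. The purely--base block vanishes, as $I$ admits no nondegenerate $2$--planes. The mixed blocks (one base direction, one fiber direction) contribute through $H^{\rho}(\partial_t,\partial_t)/\rho=\rho''/\rho$. The purely--fiber block produces the intrinsic fiber curvature, rescaled by the warping to give $\rho(t)^2{\pi_F}^{\ast}\mathrm{Riem}^F$, together with the term $-\overline{g}(\mathrm{grad}\,\rho,\mathrm{grad}\,\rho)/\rho^2=\rho'^2/\rho^2$ acting on the fiber metric. Collecting all blocks and writing the outcome symmetrically, the mixed and fiber--fiber $\rho''/\rho$ contributions assemble into $\frac{\rho''}{\rho}(\overline{g}\KN\overline{g})$, while the remaining fiber terms regroup into $\rho(t)^2(\rho'(t)^2-\rho(t)\rho''(t)){\pi_F}^{\ast}(g_F\KN g_F)$; a useful consistency check is that on pure fiber directions the $\rho''/\rho$ piece of $\overline{g}\KN\overline{g}=\rho^4{\pi_F}^\ast(g_F\KN g_F)$ exactly cancels the $-\rho^3\rho''$ part of the middle summand, leaving the expected $\rho^2\rho'^2$ coefficient. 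This yields \rf{riemGRW}.

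Finally, the Ricci identity \rf{ricGRW} follows by tracing \rf{riemGRW} against a $\overline{g}$--orthonormal frame $\{\partial_t,e_1,\dots,e_m\}$ adapted to the splitting: the trace of ${\pi_F}^{\ast}\mathrm{Riem}^F$ gives ${\pi_F}^{\ast}\mathrm{Ric}^F$, and the traces of $g_F\KN g_F$ and of $\overline{g}\KN\overline{g}$ produce the coefficients $-(m-1)(\rho\rho''-\rho'^2)$ and $m\rho''/\rho$ (this is immediately checked on the components $\overline{\mathrm{Ric}}(\partial_t,\partial_t)=-m\rho''/\rho$ and on unit spacelike directions). The main obstacle is entirely one of bookkeeping: reconciling the Lorentzian signs—the timelike $\partial_t$ and the resulting negative $\overline{g}(\mathrm{grad}\,\rho,\mathrm{grad}\,\rho)$—and the opposite curvature convention of \cite{O'N} with the $\rho^2$ rescalings that separate ${\pi_F}^{\ast}g_F$ from the fiber block of $\overline{g}$, and then recognizing the resulting symmetric $(0,4)$ expression as the stated Kulkarni--Nomizu combination.
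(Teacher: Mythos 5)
Your proposal is correct and follows essentially the same route as the paper: both specialize O'Neill's Proposition 7.42 and Corollary 7.43 to the flat one-dimensional base with $\mathrm{grad}\,\rho=-\rho'(t)\partial_t$ and $\mathrm{Hess}(\rho)=\rho''\,dt\otimes dt$, evaluate the curvature components on the base/fiber blocks, and then verify that the stated Kulkarni--Nomizu combination reproduces those components. The only cosmetic difference is that the paper phrases the final step as checking that the right-hand sides of \rf{riemGRW} and \rf{ricGRW} satisfy the same component identities (which determine the tensors by symmetry), whereas you assemble the blocks directly, with the same bookkeeping.
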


\begin{proof}
	The gradient and the Hessian of the warping function $\rho$ in the base $(I,-dt^2)$ are given by $\nabla^I\rho = -\rho'\partial_t$ and $\mathrm{Hess}(\rho) = \rho''dt\otimes dt$. Since $\overline{\nabla}_{\partial_t}\partial_t = 0$ on $\overline{M}$, the lift $-\rho'(t)\partial_t \in \mathfrak X(\overline{M})$ of $\nabla^I\rho$ satisfies $\overline{\nabla}_{\partial_t}(-\rho'(t)\partial_t) = -\rho''(t)\partial_t$ on $\overline{M}$. Let $p=(t,x)\in\overline{M}$ be given and let $X,Y,U,V,W\in T_p\overline{M}$ be such that
	$$
	{\pi_F}_{\ast}(X) = {\pi_F}_{\ast}(Y) = 0 \in T_x F, \qquad {\pi_I}_{\ast}(U) = {\pi_I}_{\ast}(V) = {\pi_I}_{\ast}(W) = 0 \in T_t I.
	$$
	From formulas (2)-(5) of Proposition 7.42 of \cite{O'N} we get
	\begin{align*}
		\overline{\mathrm{Riem}}(V,W,U,\;\cdot\;) & = \rho(t)^2\pi_F^{\ast}\mathrm{Riem}^F(V,W,U,\;\cdot\;) \\
		& \phantom{=\;} + \frac{\rho'(t)^2}{\rho(t)^2}\left(\overline{g}(V,U)\overline{g}(W,\;\cdot\;) - \overline{g}(W,U)\overline{g}(V,\;\cdot\;)\right), \\
		\overline{\mathrm{Riem}}(V,X,W,\;\cdot\;) & = \frac{\rho''(t)}{\rho(t)}\overline{g}(V,W)\overline{g}(X,\;\cdot\;), \\
		\overline{\mathrm{Riem}}(V,X,Y,\;\cdot\;) & = -\frac{\rho''(t)}{\rho(t)}\overline{g}(X,Y)\overline{g}(V,\;\cdot\;)
	\end{align*}
	and from formulas (1)-(3) of Corollary 7.43 of \cite{O'N} we also have
	\begin{align*}
		\overline{\mathrm{Ric}}(V,\;\cdot\;) & = \pi_F^{\ast}\mathrm{Ric}^F(V,\;\cdot\;) + \left( \frac{\rho''(t)}{\rho(t)} + (m-1)\frac{\rho'(t)^2}{\rho(t)^2}\right)\overline{g}(V,\;\cdot\;), \\
		\overline{\mathrm{Ric}}(X,\;\cdot\;) & = m\frac{\rho''(t)}{\rho(t)}\overline{g}(X,\;\cdot\;)
	\end{align*}
	A direct computation shows that the RHS's of (\ref{riemGRW}) and (\ref{ricGRW}) also satisfy the identities above. By the symmetry properties of $\overline{\mathrm{Riem}}$, these identities uniquely determine its action on $T_p\overline{M}$.
\end{proof}

As we see from (\ref{riemGRW}), $\overline{M}$ has constant curvature $\overline{\kappa}$ if and only if the fiber $F$ has constant curvature $\kappa_F$ and the warping function $\rho$ satisfies
\begin{equation}
	\label{ccGRW}
	\overline{\kappa} = \frac{\rho''}{\rho} \qquad \text{and} \qquad \kappa_F = \rho\rho'' - (\rho')^2 \qquad \text{on } I.
\end{equation}
These equations are not independent. In fact, if there exists $C\in\mathbb{R}$ such that $\rho\rho'' - (\rho')^2 = C$ on an interval $I_0\subseteq I$, then
$$
	\left(\frac{\rho''}{\rho}\right)' = \left(\frac{C}{\rho^2}+\left(\frac{\rho'}{\rho}\right)^2\right)' = -\frac{2C\rho'}{\rho^3} + 2\frac{\rho'}{\rho}\left(\frac{\rho'}{\rho}\right)' = -\frac{2C\rho'}{\rho^3} + \frac{2\rho'}{\rho}\frac{\rho\rho''-(\rho')^2}{\rho^2} = 0,
$$
that is, $\rho''/\rho$ is constant on $I_0$. We also characterize GRW spacetimes with spacelike sectional curvatures bounded below.

\begin{lema}
	\label{charsectbound}
	Let $\overline{M}=I\times_{\rho}F$ be a GRW spacetime. For every $C_1\in\mathbb{R}$, the following are equivalent:
	\begin{itemize}[noitemsep,nolistsep]
		\item [i)] $\overline{\mathrm{Sect}}(\Pi) \geq C_1$ for every spacelike $2$-plane $\Pi\subseteq T\overline{M}$,
		\item [ii)] there exists $C_2\in\mathbb{R}$ such that
		\begin{alignat*}{2}
			\mathrm{Sect}^F(\Pi_0) & \geq C_2 && \qquad \text{for every $2$-plane } \Pi_0 \subseteq TF \\
			\frac{C_2 - \rho\rho'' + (\rho')^2}{\rho^2} & \geq \max\left\{C_1-\frac{\rho''}{\rho}, 0\right\} && \qquad \text{on } I.
		\end{alignat*}
	\end{itemize}
\end{lema}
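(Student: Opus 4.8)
The plan is to make everything rest on one explicit pointwise formula for the sectional curvature of an arbitrary spacelike $2$-plane, obtained by plugging the curvature expression (\ref{riemGRW}) into the definition (\ref{sect}), and then to read off the equivalence by minimizing over the free parameters describing such planes. First I would put an arbitrary spacelike $2$-plane $\Pi\subseteq T_{(t,x)}\overline{M}$ into a normal form. Since $\Pi$ is spacelike, every nonzero vector in it is spacelike, so the linear functional $w\mapsto\overline{g}(w,\partial_t)$ on $\Pi$ has nontrivial kernel; hence $\Pi$ contains a unit vector $Y$ with $\overline{g}(Y,\partial_t)=0$, that is, $Y$ is tangent to the fiber. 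Completing to a $\overline{g}$-orthonormal basis $\{X,Y\}$ of $\Pi$, I may write $X=\alpha\partial_t+V$ with $V$ tangent to the fiber, $\overline{g}(V,Y)=0$ and $-\alpha^2+\overline{g}(V,V)=1$; setting $v^2:=\overline{g}(V,V)=1+\alpha^2\geq1$ records the ``tilt'' of $\Pi$ away from the fiber. Conversely, given any fiber $2$-plane $\sigma\subseteq T_xF$ and any $v^2\geq1$ one can build such a $\Pi$, so the data $(\sigma,v^2)$ parametrize all spacelike planes at $(t,x)$, and the tilt $v^2$ can be sent to $+\infty$ with $\sigma,t,x$ held fixed.

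Next I would expand $\overline{\mathrm{Sect}}(\Pi)=\overline{\mathrm{Riem}}(X,Y,X,Y)$ by multilinearity. The two fiber terms in (\ref{riemGRW}) annihilate $\partial_t$ (since ${\pi_F}_{\ast}\partial_t=0$), while the Kulkarni--Nomizu term $\tfrac{\rho''}{\rho}(\overline{g}\KN\overline{g})$ contributes nothing to the cross terms $\overline{\mathrm{Riem}}(\partial_t,Y,V,Y)$ because they carry the factors $\overline{g}(\partial_t,V)=\overline{g}(\partial_t,Y)=0$. Thus only three diagonal contractions survive, and after collecting them I expect
\begin{equation*}
	\overline{\mathrm{Sect}}(\Pi)=\frac{\rho''}{\rho}+v^2\,\frac{\mathrm{Sect}^F(\sigma)-\rho\rho''+(\rho')^2}{\rho^2},
\end{equation*}
where $\sigma\subseteq T_xF$ is the fiber plane spanned by the projections of $V$ and $Y$. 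I would verify this against (\ref{ccGRW}): in the constant curvature case the bracketed fraction vanishes and one recovers $\overline{\mathrm{Sect}}(\Pi)=\rho''/\rho=\overline{\kappa}$, as required.

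With this formula in hand, write $Q:=\frac{\mathrm{Sect}^F(\sigma)-\rho\rho''+(\rho')^2}{\rho^2}$, so that $\overline{\mathrm{Sect}}(\Pi)=\tfrac{\rho''}{\rho}+v^2Q$ with $v^2\geq1$ free. For ii)$\Rightarrow$i): the hypothesis $\mathrm{Sect}^F(\sigma)\geq C_2$ together with the $\geq0$ branch of the maximum forces $Q\geq\frac{C_2-\rho\rho''+(\rho')^2}{\rho^2}\geq0$, whence $v^2Q\geq Q$ for $v^2\geq1$, and the $C_1-\rho''/\rho$ branch then gives $\overline{\mathrm{Sect}}(\Pi)\geq\tfrac{\rho''}{\rho}+Q\geq C_1$. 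For the converse i)$\Rightarrow$ii): letting $v^2\to+\infty$ at fixed $(\sigma,t,x)$ shows i) can hold only if $Q\geq0$ for all $\sigma$ and all $t$, i.e. $\mathrm{Sect}^F(\sigma)\geq\rho\rho''-(\rho')^2$; evaluating this at one fixed $t_0\in I$ furnishes a uniform lower bound for $\mathrm{Sect}^F$, so that $C_2:=\inf_{\sigma}\mathrm{Sect}^F(\sigma)$ is finite. Since $Q\geq0$, the infimum over $v^2\geq1$ is attained at $v^2=1$ (the untilted fiber planes), so i) reduces to the two conditions $Q\geq0$ and $\tfrac{\rho''}{\rho}+Q\geq C_1$ for all $\sigma,t$; taking $\inf_{\sigma}$ in each (the $\rho$-terms being independent of $\sigma$) delivers precisely the two clauses of ii) with this $C_2$.

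The main obstacle I anticipate is the direction i)$\Rightarrow$ii), and concretely the bookkeeping around the $\max\{\,\cdot\,,0\}$: one must recognize that the ``$0$'' clause is exactly the cost of allowing the tilt $v^2\to+\infty$, whereas the ``$C_1-\rho''/\rho$'' clause comes only from the untilted fiber planes $v^2=1$, and one must \emph{separately} check that $C_2$ is finite, which is not immediate from i) but follows by freezing $t$ at a single value. The curvature contraction leading to the displayed formula is routine but sign- and index-sensitive, so I would use the constant curvature consistency check as a safeguard against computational error.
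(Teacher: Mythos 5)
Your proposal is correct and follows essentially the same route as the paper: you put a spacelike $2$-plane into the same normal form (one fiber-tangent unit vector plus one tilted vector, your tilt $v^2=1+\alpha^2$ being exactly the paper's $\cosh^2\alpha$ in (\ref{E1E2})), derive the same key identity (\ref{char1})--(\ref{char2}), and extract the two branches of the $\max$ by the same specializations $v^2=1$ and $v^2\to+\infty$, including the observation that finiteness of $C_2$ follows by freezing $t$. The displayed sectional curvature formula you ``expect'' is indeed the one the paper obtains via the tensor $V_t$ in (\ref{Vt}), so there is no gap.
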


\begin{demo}
	For every $t\in I$, define the $(0,4)$-tensor $V_t:TF\otimes TF\otimes TF\otimes TF\to C^{\infty}(F)$ by setting
	\begin{equation}
		\label{Vt}
		\begin{split}
		V_t(X,Y,Z,W) & = \rho(t)^2 \mathrm{Riem}^F(X,Y,Z,W) \\
		& \phantom{=\;} + \rho(t)^2\left(\rho'(t)^2 - \rho(t)\rho''(t)\right)(g_F\KN g_F)(X,Y,Z,W)
		\end{split}
	\end{equation}
	for every $X,Y,Z,W\in T_x F$, $x\in F$.
	
	Assume that i) holds. Let $(t,x)\in\overline{M}$, $\alpha\in\mathbb{R}$, $X,Y$ be given, with
	\begin{equation}
		\label{XY}
		X,Y\in T_x F \qquad \text{such that} \qquad g_F(X,X)=g_F(Y,Y)=1, g_F(X,Y)=0.
	\end{equation}
	The vectors
	\begin{equation}
		\label{E1E2}
		E_1 = \frac{1}{\rho(t)}\,X, \qquad E_2 = \frac{\cosh\alpha}{\rho(t)}\,Y + \sinh\alpha\,\partial_t
	\end{equation}
	belong to $T_{(t,x)}\overline{M}$ and satisfy $\overline{g}(E_1,E_1)=\overline{g}(E_2,E_2)=1$, $\overline{g}(E_1,E_2)=0$, so, by (\ref{riemGRW}), (\ref{Vt}) and (\ref{E1E2}),
	\begin{equation}
		\label{char1}
		C_1 \leq \overline{\mathrm{Sect}}(E_1\wedge E_2) = \overline{\mathrm{Riem}}(E_1,E_2,E_1,E_2) = \frac{\cosh^2\alpha}{\rho(t)^4}V_t(X,Y,X,Y) + \frac{\rho''(t)}{\rho(t)},
	\end{equation}
	that is,
	\begin{equation}
		\label{char2}
		\frac{\mathrm{Sect}^F(X\wedge Y)-\rho(t)\rho''(t)+\rho'(t)^2}{\rho(t)^2} = \frac{V_t(X,Y,X,Y)}{\rho(t)^4} \geq \frac{1}{\cosh^2\alpha}\left(C_1 - \frac{\rho''(t)}{\rho(t)}\right).
	\end{equation}
	For $\alpha=0$ and $\alpha\to+\infty$ we respectively get
	\begin{equation}
		\label{char3}
		\begin{split}
			\frac{\mathrm{Sect}^F(X\wedge Y)-\rho(t)\rho''(t)+\rho'(t)^2}{\rho(t)^2} & \geq C_1 - \frac{\rho''(t)}{\rho(t)}, \\
			\frac{\mathrm{Sect}^F(X\wedge Y)-\rho(t)\rho''(t)+\rho'(t)^2}{\rho(t)^2} & \geq 0.
		\end{split}
	\end{equation}
	For any fixed $t$, these inequalities must hold for every $x\in F$ and for every $X,Y$ as in (\ref{XY}), so $C=\inf\{ \mathrm{Sect}^F(\Pi_0) : \Pi_0\subseteq TF \text{ is a $2$-plane} \}$ is finite and ii) follows with $C_2=C$.
	
	Vice versa, assume that ii) holds. Let $(t,x)\in\overline{M}$ and $\Pi\subseteq T_{(t,x)}\overline{M}$ a spacelike $2$-plane be given. We can find a $\overline{g}$-orthonormal basis $\{E_1,E_2\}$ for $\Pi$ of the form (\ref{E1E2}), with $\alpha\in\mathbb{R}$ and $X,Y$ as in (\ref{XY}). Since inequalities (\ref{char3}) hold by assumption, we have (\ref{char2}) and therefore (\ref{char1}), that is, $\overline{\mathrm{Sect}}(\Pi)\geq C_1$. As $\Pi\subseteq T\overline{M}$ is arbitrarily given, we obtain i).
\end{demo}

Let $\psi:M\longrightarrow\overline{M}$ be a spacelike hypersurface immersed in the GRW spacetime $\overline{M}=I\times_{\rho}F$. Consider the unit timelike vector $N$ normal to $M$ with the same time orientation as $\partial_t$ and let $A$ and $H$ be the shape operator and the mean curvature of $\psi$ in the direction of $N$ as described in Section \ref{sepre}. The height function $\tau$ of the immersion $\psi$ onto the factor $I$ and the amplitude $\theta$ of the hyperbolic angle between $N$ and $\partial_t$ are given by
\begin{equation} \label{deftautheta}
	\tcr{\tau = \pi_I\circ\psi, \qquad \cosh\theta = - \overline{g}(N,\partial_t).}
\end{equation}
Note that $\theta$ is well defined (up to a sign) by the wrong-way Cauchy Schwarz inequality, since $N$ and $\partial_t$ are unit timelike vectors with the same time-orientation. As above, we write $\rho(\tau)$, $\rho'(\tau)$, $\rho''(\tau)$ to denote $\rho\circ\tau$, $\rho'\circ\tau$, $\rho''\circ\tau$. For a fixed $t_0\in I$, we set
\begin{equation} \label{defeta}
	\overline{\eta}(t) = \int_{t_0}^t \rho(s)ds \quad \forall t\in I, \qquad \eta = \overline{\eta}\circ\tau.
\end{equation}
Since $\rho>0$, $\overline{\eta}$ is strictly increasing on $I$. We also consider the positive function
\begin{equation}
	\label{defv}
	v=-\overline{g}(T,N)=\rho(\tau)\cosh\theta.
\end{equation}
In the sequel we will make extensive use of these auxiliary functions. Denoting by $\nabla$ and $\Delta$ the Levi-Civita connection and the Laplace-Beltrami operator of $(M,g)$, we have the following computational result.

\begin{lema}
	\label{comp1}
	\tcr{Let $\psi:M\longrightarrow\overline{M}$ be a spacelike hypersurface immersed in a GRW spacetime $\overline{M}=I\times_{\rho}F$ and let $T$, $N$, $A$, $H$, $\theta$, $\tau$, $\eta$ and $v$ be as above. Then}
	\begin{align}
		\label{normgradtau}
		\tcr{|\nabla\tau|^2} & \tcr{\, = \sinh^2\theta,} \\
		\label{normgradeta}
		\tcr{|\nabla\eta|^2} & \tcr{\, = \rho(\tau)^2\sinh^2\theta = v^2 - \rho(\tau)^2,} \\
		\label{normgradv}
		\tcr{|\nabla v|^2} & \tcr{\, \leq \mathrm{trace}(A^2)|\nabla\eta|^2 = \mathrm{trace}(A^2)(v^2-\rho(\tau)^2),} \\
		\label{lapeta}
		\tcr{\Delta\eta} & \tcr{\, = -m\rho'(\tau) + mHv,} \\
		\label{lapv}
		\tcr{\Delta v} & \tcr{\, = \left(\overline{\mathrm{Ric}}(N,N) + \mathrm{trace}(A^2) + m\frac{\rho''(\tau)}{\rho(\tau)}\right)v - mg(T^{\top},\nabla H) - mH\rho'(\tau),}
	\end{align}
	\tcr{where $T^{\top}$ is the tangential part of $T$ along $\psi$. If $\psi$ is maximal, then}
	\begin{align}
		\label{normgradvmax}
		\tcr{|\nabla v|^2} & \tcr{\, \leq \frac{m-1}{m}\mathrm{trace}(A^2)(v^2-\rho(\tau)^2),} \\
		\label{lapetamax}
		\tcr{\Delta\eta} & \tcr{\, = -m\rho'(\tau),} \\
		\label{lapvmax}
		\tcr{\Delta v} & \tcr{\, = \left(\overline{\mathrm{Ric}}(N,N) + \mathrm{trace}(A^2) + m\frac{\rho''(\tau)}{\rho(\tau)}\right)v.}
	\end{align}
\end{lema}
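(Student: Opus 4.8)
The plan is to derive all five identities (and their maximal counterparts) from two inputs: the closed conformal property $\overline{\nabla}_X T = \rho'(\tau)X$ recorded in (\ref{conexion}), and the Weingarten relation $AX = -\overline{\nabla}_X N$ together with the Gauss formula $\overline{\nabla}_X Y = \nabla_X Y - g(AX,Y)N$ for $X,Y$ tangent to $M$. First I would treat the gradients. Since $\overline{\nabla}\pi_I = -\partial_t$, decomposing $\partial_t = \partial_t^{\top} + \cosh\theta\,N$ (using $\overline{g}(N,\partial_t) = -\cosh\theta$ and $\overline{g}(N,N)=-1$) gives $\nabla\tau = -\partial_t^{\top}$, whence $|\nabla\tau|^2 = \overline{g}(\partial_t^{\top},\partial_t^{\top}) = -1+\cosh^2\theta = \sinh^2\theta$, proving (\ref{normgradtau}). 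As $\eta = \overline{\eta}\circ\tau$ with $\overline{\eta}'=\rho$, I get $\nabla\eta = \rho(\tau)\nabla\tau = -T^{\top}$, so $|\nabla\eta|^2 = |T^{\top}|^2 = \overline{g}(T,T) - \overline{g}(T^{\perp},T^{\perp}) = v^2-\rho(\tau)^2$, which is (\ref{normgradeta}). For $v = -\overline{g}(T,N)$ I would differentiate along a tangent $X$: the term $\overline{g}(\overline{\nabla}_X T,N) = \rho'\overline{g}(X,N)$ vanishes and $\overline{\nabla}_X N = -AX$, so $X(v) = g(AT^{\top},X)$, i.e. $\nabla v = AT^{\top} = -A\nabla\eta$. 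Then (\ref{normgradv}) follows from $|\nabla v|^2 = g(A^2 T^{\top},T^{\top}) \le \lambda_{\max}(A^2)|T^{\top}|^2 \le \mathrm{trace}(A^2)|\nabla\eta|^2$, the largest eigenvalue of the positive semidefinite $A^2$ being at most its trace.

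Next, for $\Delta\eta$ I would fix $p\in M$, choose a geodesic orthonormal frame $\{E_i\}$ at $p$, and write $\Delta\eta = \sum_i\overline{g}(\overline{\nabla}_{E_i}\nabla\eta,E_i)$ with $\nabla\eta = -T+vN$. Using $\overline{\nabla}_{E_i}T = \rho'E_i$ and $\overline{\nabla}_{E_i}N = -AE_i$, each summand reduces to $-\rho' - v\,g(AE_i,E_i)$; summing and recalling $\mathrm{trace}(A) = -mH$ gives $\Delta\eta = -m\rho'(\tau) + mHv$, which is (\ref{lapeta}).

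The heart of the argument, and the step I expect to be the main obstacle, is (\ref{lapv}). Working again at $p$ with a geodesic frame, I would compute $\Delta v = \sum_i E_i(E_i(v))$ starting from $E_i(v) = -\overline{g}(T,\overline{\nabla}_{E_i}N)$, so that $E_i(E_i(v)) = -\overline{g}(\overline{\nabla}_{E_i}T,\overline{\nabla}_{E_i}N) - \overline{g}(T,\overline{\nabla}_{E_i}\overline{\nabla}_{E_i}N)$. The first piece sums to $\rho'\,\mathrm{trace}(A) = -mH\rho'$. For the second, the Gauss formula yields $\overline{\nabla}_{E_i}\overline{\nabla}_{E_i}N = -(\nabla_{E_i}A)E_i + g(A^2E_i,E_i)N$ at $p$, contributing a clean term $\mathrm{trace}(A^2)\,v$ (from $-g(A^2E_i,E_i)\overline{g}(T,N) = g(A^2E_i,E_i)v$) and the delicate term $\sum_i\overline{g}(T^{\top},(\nabla_{E_i}A)E_i)$. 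To control the latter I would invoke the Codazzi equation, which in the paper's conventions reads $\overline{R}(X,Y)N = (\nabla_Y A)X - (\nabla_X A)Y$; tracing it against $\{E_i\}$ and using $\mathrm{trace}(A) = -mH$ gives that $\sum_i(\nabla_{E_i}A)E_i$ equals $-m\nabla H$ minus the tangential part of the vector metrically dual to $\overline{\mathrm{Ric}}(N,\cdot)$, whence $\sum_i\overline{g}(T^{\top},(\nabla_{E_i}A)E_i) = -m\,g(T^{\top},\nabla H) - \overline{\mathrm{Ric}}(N,T^{\top})$. Finally I would split $T^{\top} = T - vN$ and evaluate the warped-product Ricci tensor (\ref{ricGRW}) at $(N,T)$, where only the $m\frac{\rho''}{\rho}\overline{g}$ term survives because ${\pi_F}_{\ast}T = 0$; this gives $\overline{\mathrm{Ric}}(N,T) = -m\frac{\rho''}{\rho}v$, so $-\overline{\mathrm{Ric}}(N,T^{\top}) = \left(\overline{\mathrm{Ric}}(N,N) + m\frac{\rho''}{\rho}\right)v$. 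Collecting all contributions reproduces (\ref{lapv}). The principal difficulty is precisely the bookkeeping of curvature conventions: one must combine Codazzi with the explicit GRW Ricci tensor using the correct signs, and it is the splitting $T^{\top} = T - vN$ that converts the stray curvature term $\overline{\mathrm{Ric}}(N,T^{\top})$ into the final coefficient of $v$.

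The maximal specializations are then immediate. Setting $H\equiv0$ (hence $\nabla H = 0$) in (\ref{lapeta}) and (\ref{lapv}) gives (\ref{lapetamax}) and (\ref{lapvmax}). The sharper gradient bound (\ref{normgradvmax}) follows by refining the eigenvalue estimate used for (\ref{normgradv}): for a traceless symmetric $A$ with eigenvalues $\kappa_1,\dots,\kappa_m$, writing the largest-in-modulus eigenvalue as $\kappa_1 = -\sum_{j\ge2}\kappa_j$ and applying Cauchy--Schwarz yields $\max_i\kappa_i^2 \le \frac{m-1}{m}\mathrm{trace}(A^2)$, so that $|\nabla v|^2 = g(A^2T^{\top},T^{\top}) \le \frac{m-1}{m}\mathrm{trace}(A^2)(v^2-\rho(\tau)^2)$.
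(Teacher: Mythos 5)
Your argument is correct and follows essentially the same route as the paper: the gradients come from $\nabla\tau=-\partial_t^{\top}$, $\nabla\eta=-T^{\top}$, $\nabla v=AT^{\top}$ via the closed conformal property \rf{conexion}, the gradient bounds from the same eigenvalue and refined Kato estimates, and the Laplacians from Gauss--Weingarten, the traced Codazzi equation, and the evaluation of $\overline{\mathrm{Ric}}(N,T^{\top})$ through the splitting $T^{\top}=T-vN$ and formula \rf{ricGRW}, exactly as in the paper's equation \rf{r0}. The only difference is organizational (you trace in a geodesic frame rather than first recording the full Hessians \rf{hesseta}--\rf{hessv}), and all signs check out.
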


\begin{demo}
	\tcr{As $dt=\overline{g}(\;\cdot\;,\partial_t)$ in $\overline{M}$, we have that $\nabla\tau=-\partial_t^{\top}$ on $M$, where $\partial_t^{\top}$ is the tangential part of $\partial_t$ along $\psi$. From (\ref{defeta}) it follows that
	\begin{equation}
		\label{gradeta}
		\nabla\eta=\rho(\tau)\nabla\tau=-T^{\top}.
	\end{equation}
	Using the orthogonal decomposition $\partial_t = \partial_t^{\top} + \cosh\theta N$ and $\overline{g}(\partial_t,\partial_t)=\overline{g}(N,N)=-1$ we get 
	}
	$$
		\tcr{|\nabla\tau|^2 = g(\partial_t^{\top},\partial_t^{\top}) = \overline{g}(\partial_t,\partial_t) - \cosh^2\theta\,\overline{g}(N,N) = \sinh^2\theta, \qquad |\nabla\eta|^2 = \rho(\tau)^2\sinh^2\theta,}
	$$
	\tcr{proving (\ref{normgradtau}) and (\ref{normgradeta}) in view of (\ref{defv}).} Since the tangential component of $\tcr{T}$ along $\psi$ is $\tcr{T}^{\top} = \tcr{T} + \overline{g}(\tcr{T},N)N \tcr{\, = T - vN}$,
	a direct computation using (\ref{conexion}) gives
	\begin{equation}
		\label{gradv}
		\nabla v = A\tcr{T}^{\top} \tcr{= -A\nabla\eta.}
	\end{equation}
	\tcr{Denoting by $\lambda_1,\dots,\lambda_m$ the eigenvalues of $A:T_pM\to T_p M$ at a given point $p\in M$, we have}
	\begin{equation}
		\label{AX}
		\tcr{|AX|^2 = g(A^2X,X) \leq |X|^2 \max_{1\leq i\leq m}\lambda_i^2 \leq \mathrm{trace}(A^2) |X|^2}
	\end{equation}
	\tcr{for each vector $X\in T_p M$. So, (\ref{normgradv}) follows from (\ref{gradv}) and (\ref{normgradeta}). If $\psi$ is maximal, then $\mathrm{trace}(A)\equiv0$ and we obtain a ``refined Kato''-type inequality: for each $1\leq i\leq m$ we have $\lambda_i = -\sum_{1\leq j\leq m,j\neq i}\lambda_j$ and using Cauchy inequality we get}
	$$
		\tcr{\lambda_i^2 + \frac{1}{m-1}\lambda_i^2 = \lambda_i^2 + \frac{1}{m-1}\left(\sum_{\substack{1\leq j\leq m \\ j\neq i}}\lambda_j\right)^2 \leq \lambda_i^2 + \frac{m-1}{m-1}\sum_{\substack{1\leq j\leq m \\ j\neq i}}\lambda_j^2 = \mathrm{trace}(A^2).}
	$$
	\tcr{Hence, $\lambda_i^2\leq\frac{m-1}{m}\mathrm{trace}(A^2)$ for each $1\leq i\leq m$, thus $|AX|^2 \leq \frac{m-1}{m}\mathrm{trace}(A^2) |X|^2$ for every $X\in T_p M$, proving the refined version (\ref{normgradvmax}).}
	
	\tcr{In order to prove (\ref{lapeta}) and (\ref{lapv}), we recall that} Gauss and Weingarten formulas for the immersion $\psi$ are respectively given by
	\begin{equation}
		\label{GWF}
		\overline{\nabla}_X Y = \nabla_X Y - g(AX,Y)N \qquad \text{and} \qquad AX = -\overline{\nabla}_X N
	\end{equation}
	for any $X,Y\in\mathfrak X(M)$ and that the covariant derivative $\nabla A$ of $A$, defined by $(\nabla_X A)Y = \nabla_X(AY) - A(\nabla_X Y)$ for every $X,Y\in\mathfrak X(M)$, satisfies Codazzi equation
	\begin{equation}
		\label{coda}
		 g((\nabla_X A)Y,Z) = g((\nabla_Y A)X,Z) - \overline{g}(\overline{\mathrm{R}}(X,Y)N,Z)
	\end{equation}
	for every $X,Y,Z\in\mathfrak X(M)$. Taking the tangential component in (\ref{conexion}) and using (\ref{GWF}) and (\ref{coda}) together with the definition of $\nabla A$, we get
	\begin{align}
		\label{hesseta}
		\nabla_X \tcr{T}^{\top} & = -\rho(\tau)\,\overline{g}(N,\partial_t)AX + \rho'(\tau)X \tcr{ \, = vAX + \rho'(\tau)X}, \\
		\label{hessv}
		\nabla_X (AT^{\top}) & = (\nabla_X A)T^{\top} + A(\nabla_X T^{\top}) \\
		& = (\nabla_{T^{\top}}A)X - (\overline{\mathrm{R}}(X,T^{\top})N)^{\top} + vA^2 X + \rho'(\tau) AX
	\end{align}
	for any $X\in\mathfrak X(M)$. By definition, for every function $u\in C^{\infty}(M)$,
	$$
		\Delta u = \mathrm{trace}(\nabla(\nabla u)) = \sum_{i=1}^m g(\nabla_{E_i}\nabla u,E_i)
	$$
	for any choice of a local orthonormal frame $\{E_1,\dots,E_m\}$ on $TM$. By (\ref{gradeta}), (\ref{gradv}), (\ref{hesseta}), (\ref{hessv}) and since $\mathrm{trace}(A) = -mH$ and $\mathrm{trace}(\nabla_{T^{\top}}A) = \nabla_{T^{\top}}(\mathrm{trace}(A)) = -m\nabla_{T^{\top}}H$, we obtain
	$$
		\Delta \eta = -m\rho'(\tau) - mHv, \quad \Delta v = - \overline{\mathrm{Ric}}(T^{\top},N) + \mathrm{trace}(A^2)v - m\nabla_{T^{\top}}H - mH\rho'(\tau),
	$$
	and (\ref{lapeta}) is proved. Writing $\tcr{T}^{\top} = \tcr{T} - vN$, by (\ref{ricGRW}) we have
	\begin{equation}
		\label{r0}
		\tcr{-\overline{\mathrm{Ric}}(T^{\top},N) = -\overline{\mathrm{Ric}}(T,N) + \Ric(N,N)v = \left(m\frac{\rho''(\tau)}{\rho(\tau)} + \Ric(N,N)\right)v,}
	\end{equation}
	\tcr{as ${\pi_F}^{\ast}\mathrm{Ric}^F(\partial_t,\;\cdot\;)=0$ and $(dt\otimes dt)(T,N) = -\overline{g}(T,N) = \rho(\tau)\cosh\theta = v$. This concludes the proof of (\ref{lapv}). If $\psi$ is maximal, then $H\equiv0$ and (\ref{lapetamax}) and (\ref{lapvmax}) follow at once.}
\end{demo}

\begin{rem}
	The spacelike slices $\{t\}\times F$, $t\in I$ of $\overline{M}$ are totally umbilical hypersurfaces, in other words they satisfy $\mathrm{trace}(A^2) = mH^2$, and they have mean curvature $H=\rho'(t)/\rho(t)$ in the direction of the future-pointing normal. This is a consequence of (\ref{lapeta}) and (\ref{lapv}), as the image $\psi(M)$ of an immersed hypersurface $\psi:M\longrightarrow\overline{M}$ is contained in a spacelike slice if and only if $\tau$ is constant on $M$, in which case $N=\partial_t$ and $v\equiv\rho(t)$.
\end{rem}

As a first application of equation (\ref{lapetamax}), we prove Theorem \ref{intro:teoGRW1} of the Introduction as a corollary of the following result, which generalizes Theorem 3.7 of \cite{ARSc}.

\begin{teor}
	\label{teosu}
	Let $\psi: M \longrightarrow \overline{M}$ be a maximal hypersurface in a GRW spacetime $\overline{M} = I \times_{\rho} F$. If the weak maximum principle for the Laplacian holds on $M$ and $\psi(M)$ is contained in a slab $[a, b] \times F \subseteq I \times F$, then $\rho'(\tau^{\ast}) \geq0$ and $\rho'(\tau_{\ast}) \leq 0$, where $\tau^{\ast} = \sup_M \tau$, $\tau_{\ast} = \inf_M \tau$.
\end{teor}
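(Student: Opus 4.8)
The plan is to apply the weak maximum principle for the Laplacian to the auxiliary function $\eta=\overline{\eta}\circ\tau$ introduced in (\ref{defeta}), taking advantage of the particularly simple expression for its Laplacian in the maximal case. First I would record that the slab hypothesis $\psi(M)\subseteq[a,b]\times F$ means precisely that $a\leq\tau\leq b$ on $M$. Since $\rho>0$, the function $\overline{\eta}$ is continuous and strictly increasing on $I$, so $\eta$ is bounded on $M$ and, by continuity and monotonicity of $\overline{\eta}$, its extremal values are $\sup_M\eta=\overline{\eta}(\tau^{\ast})$ and $\inf_M\eta=\overline{\eta}(\tau_{\ast})$.

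Because $\psi$ is maximal, (\ref{lapetamax}) of Lemma \ref{comp1} gives the identity $\Delta\eta=-m\rho'(\tau)$ on $M$. To obtain $\rho'(\tau^{\ast})\geq0$, I would apply the weak maximum principle (Definition \ref{MP}) to the bounded-above function $\eta$: this produces a sequence $\{x_k\}\subseteq M$ with $\eta(x_k)>\overline{\eta}(\tau^{\ast})-\tfrac{1}{k}$ and $\Delta\eta(x_k)<\tfrac{1}{k}$, whence $\rho'(\tau(x_k))>-\tfrac{1}{mk}$. The decisive step is to transfer the convergence $\eta(x_k)\to\overline{\eta}(\tau^{\ast})$ back to the heights: since $\overline{\eta}$ is a homeomorphism onto its image, $\tau(x_k)=\overline{\eta}^{-1}(\eta(x_k))\to\tau^{\ast}$, and then continuity of $\rho'$ yields $\rho'(\tau^{\ast})=\lim_k\rho'(\tau(x_k))\geq0$.

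The inequality $\rho'(\tau_{\ast})\leq0$ follows by repeating the argument verbatim with $-\eta$ in place of $\eta$, using $\Delta(-\eta)=m\rho'(\tau)$ and $\sup_M(-\eta)=-\overline{\eta}(\tau_{\ast})$: the weak maximum principle now supplies points at which $\tau\to\tau_{\ast}$ and $\rho'(\tau)<\tfrac{1}{mk}$, forcing $\rho'(\tau_{\ast})\leq0$ in the limit. No serious obstacle arises in this scheme; the only point requiring a little care is the monotone change of variables, namely that convergence of $\eta$ to its supremum (resp.\ infimum) transfers, through the strictly increasing diffeomorphism $\overline{\eta}$, to convergence of the height $\tau$ to $\tau^{\ast}$ (resp.\ $\tau_{\ast}$), which is exactly what allows us to evaluate the continuous function $\rho'$ at the limiting height.
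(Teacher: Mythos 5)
Your proposal is correct and follows essentially the same route as the paper: both apply the weak maximum principle to the bounded functions $\eta$ and $-\eta$, use the identity $\Delta\eta=-m\rho'(\tau)$ from (\ref{lapetamax}), and pass to the limit along the resulting sequences. The extra care you take in transferring $\eta(x_k)\to\sup_M\eta$ to $\tau(x_k)\to\tau^{\ast}$ via the strict monotonicity and continuity of $\overline{\eta}$ is exactly the (implicit) step the paper relies on when writing $\lim_k\rho'(\tau(x_k))=\rho'(\tau^{\ast})$.
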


\begin{demo}
	Let $\eta$ be defined as in (\ref{defeta}). Since $\tau(M)\subseteq[a,b]$, we have
	\begin{equation}
		\label{pri5}
		-\infty < \int_{t_0}^a \rho(s)ds = \overline{\eta}(a) \leq \eta \leq \overline{\eta}(b) = \int_{t_0}^b \rho(s) ds < +\infty \qquad \text{on } M.
	\end{equation}
	By the weak maximum principle applied to $\eta$ and $-\eta$, see Definition \ref{MP}, we can find two sequences of points $\{x_k\}_{k\in\mathbb{N}},\{y_k\}_{k\in\mathbb{N}}\subseteq M$ such that
	\begin{align*}
		\lim_{k\to+\infty} \eta(x_k) = \sup_M \eta, & \qquad \limsup_{k\to+\infty} \Delta \eta(x_k) \leq 0, \\
		\lim_{k\to+\infty} \eta(y_k) = \inf_{M} \eta, & \qquad \liminf_{k\to+\infty} \Delta \eta(y_k) \geq 0.
	\end{align*}
	We recall that $\eta$ is a strictly monotonic function of $\tau$ and that $\Delta\eta = -m\rho'(\tau)$ on $M$. Hence,
	$$
		-m\rho'(\tau^{\ast}) = \lim_{k\to+\infty} -m\rho'(\tau(x_k))) = \lim_{k\to+\infty} \Delta\eta(x_k) \leq 0
	$$
	and similarly we have $-m\rho'(\tau_{\ast}) \geq 0$.
\end{demo}

\begin{coro}
	\label{teoGRW1}
	Let $\overline{M} = I\times_{\rho}F$ be a GRW spacetime whose sectional curvatures on spacelike $2$-planes are bounded below and let $\psi:M\longrightarrow\overline{M}$ be a complete maximal hypersurface. If $\psi(M)$ is contained in a slab $[a,b]\times F$, then $\rho'(\tau^{\ast}) \geq 0$, $\rho'(\tau_{\ast})\leq 0$, where $\tau^{\ast} = \sup_M \tau$, $\tau_{\ast} = \inf_M \tau$. In particular, if there exist $t_0,t_1\in[a,b]$ such that $\rho'=0$ on $[t_0,t_1]$, then the following implications hold:
	\begin{itemize}[noitemsep,nolistsep]
		\item [i)] if $\rho'<0$ on $[a,t_0)$ and $\rho'>0$ on $(t_1,b]$, then $\psi(M)$ must intersect every spacelike slice $\{t\}\times F$ with $t_0\leq t\leq t_1$;
		\item [ii)] if $\rho'>0$ on $[a,t_0)$ and $\rho'<0$ on $(t_1,b]$, then $\psi(M) \subseteq [t_0,t_1]\times F$.
	\end{itemize}
\end{coro}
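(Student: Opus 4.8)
The plan is to obtain the two inequalities $\rho'(\tau^{\ast})\geq 0$ and $\rho'(\tau_{\ast})\leq 0$ as a direct application of Theorem \ref{teosu}, and then to read off the geometric conclusions i) and ii) from the sign of $\rho'$ together with the connectedness of $M$. The only genuinely nontrivial point in passing from Theorem \ref{teosu} to the present statement is verifying that the weak maximum principle for the Laplacian holds on $M$; everything else is elementary once this is granted.

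To establish the weak maximum principle I would produce a uniform lower bound on $\mathrm{Ric}$ and invoke the fact, recalled after Theorem \ref{KO}, that a complete Riemannian manifold with $\mathrm{Ric}\geq Cg$ for some constant $C$ satisfies the Omori--Yau, hence the weak, maximum principle. Let $C_1\in\mathbb R$ be a lower bound for the spacelike sectional curvatures of $\overline{M}$. Fix $q\in M$ and a unit vector $X\in T_qM$, and choose an orthonormal basis $\{e_1,\dots,e_{m-1}\}$ of $X^{\bot}\subseteq T_qM$. Exactly as in the proof of Theorem \ref{teoni2}, Gauss equation (\ref{ric}) together with maximality ($\mathrm{trace}(A)=0$, which kills the term $mHg(AX,X)$) gives
\begin{equation*}
	\mathrm{Ric}(X,X) = \sum_{i=1}^{m-1}\overline{\mathrm{Sect}}(X\wedge e_i) + |AX|^2.
\end{equation*}
Since $T_qM$ is spacelike, each $X\wedge e_i$ is a spacelike $2$-plane, so $\overline{\mathrm{Sect}}(X\wedge e_i)\geq C_1$, while $|AX|^2\geq 0$; hence $\mathrm{Ric}(X,X)\geq (m-1)C_1$, and by homogeneity $\mathrm{Ric}\geq (m-1)C_1\,g$ on $M$. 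As $M$ is complete, the weak maximum principle holds, and since $\psi(M)$ lies in the slab $[a,b]\times F$, Theorem \ref{teosu} yields $\rho'(\tau^{\ast})\geq 0$ and $\rho'(\tau_{\ast})\leq 0$.

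It remains to deduce i) and ii) under the assumption $\rho'\equiv 0$ on $[t_0,t_1]$, which I would do by locating $\tau^{\ast}$ and $\tau_{\ast}$ in the regions where $\rho'$ has a definite sign. In case i) one has $\{\rho'\geq 0\}\cap[a,b]=[t_0,b]$ and $\{\rho'\leq 0\}\cap[a,b]=[a,t_1]$, so $\rho'(\tau^{\ast})\geq 0$ forces $\tau^{\ast}\geq t_0$ and $\rho'(\tau_{\ast})\leq 0$ forces $\tau_{\ast}\leq t_1$; since $M$ is connected and $\tau$ continuous, $\tau(M)$ is an interval with endpoints $\tau_{\ast},\tau^{\ast}$, and the intermediate value theorem then gives the asserted intersections of $\psi(M)$ with the slices $\{t\}\times F$. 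In case ii) the sign pattern is reversed, $\{\rho'\geq 0\}\cap[a,b]=[a,t_1]$ and $\{\rho'\leq 0\}\cap[a,b]=[t_0,b]$, whence $\tau^{\ast}\leq t_1$ and $\tau_{\ast}\geq t_0$; therefore $t_0\leq\tau_{\ast}\leq\tau^{\ast}\leq t_1$ and consequently $\psi(M)\subseteq[t_0,t_1]\times F$.

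The main obstacle is the first step, namely converting the hypothesis on spacelike sectional curvatures into a genuine uniform Ricci lower bound on the hypersurface. This is where both the maximality of $\psi$ (needed to discard the mean-curvature term in Gauss equation) and the spacelike nature of the tangent $2$-planes $X\wedge e_i$ (needed so that the curvature hypothesis actually applies to them) enter in an essential way; note that the sign of the contribution $|AX|^2$ is favorable, so no a priori control on the shape operator is required. Once the Ricci bound is in hand the reduction to Theorem \ref{teosu} is immediate, and the remaining arguments are purely one-dimensional, resting only on the sign of $\rho'$ and the connectedness of $\tau(M)$.
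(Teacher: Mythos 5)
Your proposal is correct and follows essentially the same route as the paper's proof: the uniform Ricci lower bound $(m-1)C_1$ obtained from the Gauss equation together with maximality, the Omori--Yau (hence weak) maximum principle from completeness, the reduction to Theorem \ref{teosu}, and the same sign analysis locating $\tau^{\ast}$ and $\tau_{\ast}$ relative to $t_0$ and $t_1$. The only caveat is that in case i) your appeal to the intermediate value theorem actually delivers just what the paper's own argument delivers, namely $\tau^{\ast}\geq t_0$ and $\tau_{\ast}\leq t_1$ (so $\tau(M)$ meets $[t_0,t_1]$), rather than literally every slice $\{t\}\times F$ with $t_0\leq t\leq t_1$; this imprecision is shared with, indeed inherited from, the paper's proof.
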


\begin{demo}
	As in the proof of Theorem \ref{teoni2}, since $M$ is maximal and there exists $C\in\mathbb{R}$ such that $\overline{\mathrm{Sect}}(\Pi) \geq C$ for every spacelike $2$-plane $\Pi\subseteq T\overline{M}$, the Ricci curvature of $M$ is bounded below by $(\dim M - 1)C$. Since $(M,g)$ is complete, the weak maximum principle for the Laplacian holds on $M$ and the main statement is a direct consequence of Theorem \ref{teosu}. If $t_0\in[a,b]$ is such that $\rho'<0$ (respectively, $\rho'>0$) on $[a,t_0)$, then $\tau^{\ast}\geq t_0$ (resp., $\tau_{\ast}\geq t_0$). Similarly, if $t_1\in[a,b]$ is such that $\rho'>0$ (resp., $\rho'<0$) on $(t_1,b]$, then $\tau_{\ast}\leq t_1$ (resp., $\tau^{\ast}\leq t_1$). This concludes the proof.
\end{demo}

The following consequence is a Frankel type result.

\begin{coro}
	\label{corodS}
	Let $\mathbb{S}^{m+1}_1(\overline{\kappa})$ be the $(m+1)$-dimensional de Sitter spacetime of constant curvature $\overline{\kappa}>0$ and let $\psi_0 : M_0 \longrightarrow \mathbb{S}^{m+1}_1(\overline{\kappa})$, $\psi_1 : M_1 \longrightarrow \mathbb{S}^{m+1}_1(\overline{\kappa})$ be two complete maximal hypersurfaces. Then $\psi_0(M_0) \cap \psi_1(M_1) \neq \varnothing$.
\end{coro}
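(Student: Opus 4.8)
The plan is to combine the rigidity provided by Theorem \ref{teocsc}, namely that in a de Sitter spacetime every complete maximal hypersurface is compact and totally geodesic, with the slab-intersection statement of Corollary \ref{teoGRW1}, after realizing $\mathbb{S}^{m+1}_1(\overline{\kappa})$ as a GRW spacetime whose unique totally geodesic slice is $\psi_0(M_0)$.

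First I would write $\mathbb{S}^{m+1}_1(\overline{\kappa})$ as a warped product $\mathbb{R}\times_{\rho}\mathbb{S}^m$ with $\rho(t)=\overline{\kappa}^{-1/2}\cosh(\sqrt{\overline{\kappa}}\,t)$; by the relations (\ref{ccGRW}) this has constant curvature $\overline{\kappa}$, its fiber $\mathbb{S}^m$ carrying the round metric of curvature $\kappa_F=1$. The decisive feature of this $\rho$ is that $\rho'(t)=\sinh(\sqrt{\overline{\kappa}}\,t)$ is strictly increasing and vanishes only at $t=0$, being negative on $(-\infty,0)$ and positive on $(0,+\infty)$; since the slices are totally umbilical with mean curvature $\rho'/\rho$, the slice $\{0\}\times\mathbb{S}^m$ is the unique totally geodesic slice of this decomposition. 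Because de Sitter spacetime is homogeneous and its complete totally geodesic spacelike hypersurfaces are exactly the equators, I can choose the GRW coordinates, equivalently apply an ambient isometry, so that the totally geodesic hypersurface $\psi_0(M_0)$ coincides with $\{0\}\times\mathbb{S}^m$.

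With this normalization I turn to $\psi_1$. By Theorem \ref{teocsc} the hypersurface $M_1$ is compact, so its height function $\tau=\pi_I\circ\psi_1$ has compact image and $\psi_1(M_1)$ is contained in a slab $[a,b]\times\mathbb{S}^m$. The ambient sectional curvatures are the constant $\overline{\kappa}>0$, hence in particular bounded below on spacelike $2$-planes, so Corollary \ref{teoGRW1} applies and gives $\rho'(\tau^{\ast})\geq0$ and $\rho'(\tau_{\ast})\leq0$, where $\tau^{\ast}=\sup_{M_1}\tau$ and $\tau_{\ast}=\inf_{M_1}\tau$. Since $\rho'(t)=\sinh(\sqrt{\overline{\kappa}}\,t)$ vanishes exactly at $t=0$ and changes sign there, these two inequalities force $\tau_{\ast}\leq0\leq\tau^{\ast}$. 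As $M_1$ is connected and $\tau$ is continuous, the intermediate value theorem yields $p\in M_1$ with $\tau(p)=0$, that is, $\psi_1(p)\in\{0\}\times\mathbb{S}^m=\psi_0(M_0)$, whence $\psi_0(M_0)\cap\psi_1(M_1)\neq\varnothing$.

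The routine ingredients, namely the warped-product identity for $\rho$, compactness and total geodesy from Theorem \ref{teocsc}, and the final intermediate-value step, are immediate. The one point demanding care, and the main obstacle, is the reduction in the second paragraph: the use of the homogeneity of de Sitter spacetime to place the given totally geodesic hypersurface $\psi_0(M_0)$ as the unique totally geodesic slice $\{0\}\times\mathbb{S}^m$ of an adapted GRW decomposition. Once this normalization is justified, the intersection follows from Theorems \ref{teocsc} and \ref{teoGRW1} exactly as above.
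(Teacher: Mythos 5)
Your argument is correct and is essentially the proof given in the paper: both realize $\mathbb{S}^{m+1}_1(\overline{\kappa})$ as the GRW spacetime $\mathbb{R}\times_{\rho}F$ with $\rho$ a hyperbolic cosine, use Theorem \ref{teocsc} to make $M_0$, $M_1$ compact and $\psi_0$ totally geodesic, move $\psi_0(M_0)$ onto the slice $\{0\}\times F$ by an ambient isometry, and then invoke Corollary \ref{teoGRW1} (your intermediate-value step is exactly how point i) of that corollary is obtained from the inequalities $\rho'(\tau^{\ast})\geq 0$, $\rho'(\tau_{\ast})\leq 0$). The normalization step you flag as the main obstacle is handled in the paper the same way, via homogeneity of de Sitter spacetime together with compactness of $M_0$ and connectedness of the fiber to get equality $\Psi(\psi_0(M_0))=\{0\}\times F$ rather than mere inclusion.
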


\begin{demo}
	Let $F=\mathbb{S}^m(\overline{\kappa})$ be the standard $m$-sphere of constant curvature $\overline{\kappa}>0$, set $\rho(t) = \cosh(\sqrt{\overline{\kappa}}t)$ for every $t\in\mathbb{R}$ and let $\overline{M} = \mathbb{R}\times_{\rho}F$. The GRW spacetime $\overline{M}$ is isometric to $\mathbb{S}^{m+1}_1(\overline{\kappa})$ (see, for instance, page 339 of \cite{W}). By Theorem \ref{teocsc}, $M_0$ is compact and $\psi_0$ is totally geodesic, so there exists an isometry $\Psi : \mathbb{S}^{m+1}_1(\overline{\kappa}) \longrightarrow \overline{M}$ such that $\Psi\circ\psi_0 : M_0 \to \overline{M}$ sends $M_0$ into the totally geodesic spacelike slice $\{0\}\times F$. Since $M_0$ is compact and $F$ is connected, $(\Psi\circ\psi_0)(M_0) = \{0\}\times F$. By Theorem \ref{teocsc} again, $M_1$ is compact. Let $\psi := \Psi \circ \psi_1 : M_1 \to \overline{M}$. The projection of $\psi(M_1)$ on the $\mathbb{R}$-factor of $\overline{M}$ is compact. Moreover, $\rho'<0$ on $(-\infty,0)$ and $\rho'>0$ on $(0,+\infty)$. So, we apply point i) of Corollary \ref{teoGRW1} to obtain that $(\{0\}\times F) \cap \psi(M_1) \neq \varnothing$. Since $\psi_0(M_0) = \Psi^{-1}(\{0\}\times F)$, $\psi_1(M_1) = \Psi^{-1}(\psi(M_1))$, we obtain $\psi_0(M_0) \cap \psi_1(M_1) \neq \varnothing$.
\end{demo}

We are now ready to prove Theorem \ref{intro:teosup3} of the Introduction.

\begin{teor}
	\label{teosup3}
	Let $\psi:M\longrightarrow\overline{M}$ be a complete maximal surface in a $3$-dimensional GRW spacetime $\overline{M}=I\times_{\rho}F$. Suppose that $\overline{M}$ has nonnegative sectional curvatures on spacelike tangent $2$-planes. Then $\psi$ is totally geodesic and one of the following cases occurs:
	\begin{itemize}[noitemsep,nolistsep]
		\item [i)] $\psi(M)$ is a spacelike slice $\{t\}\times F$ for some $t\in I$ such that $\rho'(t)=0$,
		\item [ii)] $(F,g_F)$ is a flat, complete Riemann surface and $\overline{M}$ is the product manifold $\mathbb{R}\times F$ endowed with the flat metric $-dt^2+g_F$,
		\item [iii)] $(F,g_F)$ is a compact Riemann surface of constant positive Gaussian curvature, $M$ is a round sphere and the spacetime $\overline{M}$ has constant positive sectional curvature in the slab $\tau(M)\times F$. 
	\end{itemize}
\end{teor}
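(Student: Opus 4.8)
The plan is to specialize everything to $m=2$ and run a parabolicity argument on the positive function $v$ of Lemma \ref{comp1}. First I would use Lemma \ref{charsectbound} with $C_1=0$: the hypothesis $\overline{\mathrm{Sect}}\geq 0$ on spacelike $2$-planes is equivalent to $\kappa_F$ being bounded below on $F$ together with the pointwise inequalities (\ref{char3}) from its proof, which for a surface fiber read $\kappa_F-\rho\rho''+(\rho')^2\geq 0$ at every point of $\overline{M}$. Feeding (\ref{ricGRW}) into (\ref{lapvmax}) and using $\overline{g}(N,N)=-1$, a direct computation gives $\overline{\mathrm{Ric}}(N,N)+2\rho''/\rho=\frac{\sinh^2\theta}{\rho^2}\big(\kappa_F-\rho\rho''+(\rho')^2\big)$, so that
$$\Delta v=\Big(\tfrac{\sinh^2\theta}{\rho^2}\big(\kappa_F-\rho\rho''+(\rho')^2\big)+\mathrm{trace}(A^2)\Big)v=:Qv\geq 0,$$
i.e. $v$ is subharmonic with $Q\geq \mathrm{trace}(A^2)\geq 0$.

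Next I would record two consequences. From the Gauss equation (\ref{gauss}), $\mathrm{K}=\overline{\mathrm{Sect}}(T_pM)-\det A=\overline{\mathrm{Sect}}(T_pM)+\tfrac12\mathrm{trace}(A^2)\geq 0$, so $M$ is a complete surface of nonnegative Gaussian curvature; being either compact or of at most quadratic area growth, $M$ is parabolic. From the refined Kato inequality (\ref{normgradvmax}) and $\mathrm{trace}(A^2)\leq Q$ one gets $|\nabla v|^2\leq \tfrac12\mathrm{trace}(A^2)(v^2-\rho^2)\leq \tfrac12 Qv^2=\tfrac12 v\Delta v$, that is $2|\nabla v|^2\leq v\Delta v$. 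This is exactly what makes a negative power of $v$ superharmonic: $\Delta(1/v)=v^{-3}\big(2|\nabla v|^2-v\Delta v\big)\leq 0$. Since $1/v$ is a positive superharmonic function on the parabolic manifold $M$, it is constant, hence $v\equiv v_0$ and $\Delta v\equiv 0$. Then $Qv_0=0$ forces $Q\equiv 0$, yielding both $\mathrm{trace}(A^2)\equiv 0$ (so $\psi$ is totally geodesic) and $\sinh^2\theta\,\big(\kappa_F-\rho\rho''+(\rho')^2\big)\equiv 0$.

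It remains to prove the trichotomy. With $A\equiv 0$, equation (\ref{hesseta}) gives $\mathrm{Hess}\,\eta=-\rho'(\tau)g$ and $|\nabla\eta|^2=v_0^2-\rho(\tau)^2$, so $\nabla\eta$ is a closed conformal gradient field. If $\tau$ is constant then $\psi(M)$ is a slice $\{t_0\}\times F$, which is maximal iff $\rho'(t_0)=0$, giving case i). If $\tau$ is non-constant, then $\nabla\eta\not\equiv 0$; on the open set $U=\{\nabla\tau\neq 0\}$ the vanishing factor above forces $\kappa_F=\rho\rho''-(\rho')^2$, which together with $\mathrm{K}=\rho''/\rho\geq 0$ on $U$ (obtained from (\ref{sca}) with $\mathrm{trace}(A^2)=0$) and (\ref{ccGRW}) identifies $\overline{M}$ as a spacetime of constant curvature $\overline{\kappa}=\rho''/\rho\geq 0$ on the slab $\tau(M)\times F$. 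Feeding this back into Theorems \ref{teocsc} and \ref{teosup1} then yields case ii) when $\overline{\kappa}=0$ (flat product, $M$ a plane, cylinder or torus) and case iii) when $\overline{\kappa}>0$ ($M$ a round sphere, $F$ compact of constant positive curvature), the sign $\overline{\kappa}<0$ being excluded by $\mathrm{K}\geq 0$.

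The main obstacle is this last step: upgrading the pointwise identity $\kappa_F=\rho\rho''-(\rho')^2$, valid only on $U$, to constant curvature of $\overline{M}$ on the entire slab and reading off the topology of $M$. This is where the conformal gradient field $\nabla\eta$ must be exploited in earnest — through a Tashiro/Obata-type classification of complete surfaces carrying a nonconstant function with $\mathrm{Hess}$ proportional to the metric — to show that $\rho''/\rho$ is constant, that $\kappa_F$ equals the corresponding constant, and that completeness rules out proper warped (e.g. Milne-type) charts, so that one is left precisely with the global pictures described in ii) and iii).
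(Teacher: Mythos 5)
Your first half is correct and is essentially the paper's argument: the inequality $\kappa_F-\rho\rho''+(\rho')^2\geq 0$ from Lemma \ref{charsectbound}, the identity $\overline{\mathrm{Ric}}(N,N)+2\rho''(\tau)/\rho(\tau)=\rho(\tau)^{-2}\sinh^2\theta\,\bigl(\kappa_F-\rho\rho''+(\rho')^2\bigr)\geq0$, the refined Kato inequality \rf{normgradvmax} giving $2|\nabla v|^2\leq v\Delta v$, superharmonicity of $1/v$, parabolicity of $M$ from $\mathrm{K}\geq0$, and the resulting conclusions $\mathrm{trace}(A^2)\equiv0$ and $\sinh^2\theta\,\bigl(\kappa_F-\rho\rho''+(\rho')^2\bigr)\equiv0$ all match the paper (which proves superharmonicity of $v^{-\alpha}$ for all $\alpha\in[0,1]$, but only $\alpha=1$ is used).

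The genuine gap is the one you flag yourself: everything after "on the open set $U=\{\nabla\tau\neq0\}$" is a sketch, not a proof, and the missing steps constitute roughly half of the paper's argument. Concretely: (1) the identity $\kappa_F=\rho\rho''-(\rho')^2$ holds only on $U$, and to propagate it you need $U$ to be \emph{dense} in $M$; the paper gets this not from a Tashiro/Obata classification but from the unique continuation property for the semilinear equation $\Delta\eta=-2(\rho'\circ\overline{\eta}^{-1})(\eta)$, which forbids $\eta$ from being constant on a nonempty open set unless it is constant everywhere. Density, plus closedness of the sets $I_0$, $F_0$ of \rf{I0F0}, then gives $\tau(M)\subseteq I_0$ and $(\pi_F\circ\psi)(M)\subseteq F_0$, and the elementary ODE observation preceding Lemma \ref{charsectbound} (that $\rho\rho''-(\rho')^2$ constant forces $\rho''/\rho$ constant) yields $\overline{\kappa}$. (2) Your appeal to Theorems \ref{teocsc} and \ref{teosup1} cannot deliver the stated structure of $F$ and $I$, since at this stage $\overline{M}$ is only known to have constant curvature on the cylinder $\tau(M)\times(\pi_F\circ\psi)(M)$, not globally. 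To reach case ii) the paper must still show $\rho'\equiv0$ (ruling out nonzero constants via boundedness of the super/subharmonic function $\eta$ on the parabolic surface $M$), that $\tau(M)=I=\mathbb{R}$, and that $(\pi_F\circ\psi)(M)=F$ with $g_F$ complete and flat, the last point via the completeness of $\sigma=g+d\tau^2\geq g$ and the local isometry $\pi_F\circ\psi$. To reach case iii) it must show $(\pi_F\circ\psi)(M)=F$ (openness plus compactness plus connectedness) and, crucially, that $C_2>0$ — i.e.\ that $F$ really has \emph{positive} curvature — which the paper obtains by a separate contradiction argument combining the concavity of $\log\rho$ with Corollary \ref{teoGRW1}. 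None of these steps is supplied by your proposal, and it is not clear that a Tashiro-type classification of $(M,g,\eta)$ alone would produce the conclusions about the fiber $F$ and the interval $I$, which are statements about the ambient spacetime rather than about $M$.
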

	
\begin{demo}
	As already remarked, the assumption that $\overline{M}$ has nonnegative sectional curvatures on spacelike $2$-planes implies that $\mathrm{Ric}\geq0$ on $M$ in the sense of quadratic forms, that is, the Gaussian curvature $\mathrm{K}$ of $M$ is nonnegative. Moreover, denoting by $\mathrm{K}^F$ the Gaussian curvature of $F$, by Lemma \ref{charsectbound} there exists $C_2\in\mathbb{R}$ such that, for each $(t,x)\in\overline{M}$,
	\begin{equation}
		\label{curvbound}
		\begin{split}
		\mathrm{K}^F(x) \geq C_2, \qquad \frac{C_2-\rho(t)\rho''(t)+\rho'(t)^2}{\rho(t)^2} & \geq 0, \\
		\frac{C_2-\rho(t)\rho''(t)+\rho'(t)^2}{\rho(t)^2} & \geq -\frac{\rho''(t)}{\rho(t)}.
		\end{split}
	\end{equation}
	
	Let $v=-\overline{g}(T,N)$ on $M$. By (\ref{lapvmax}), for any $\alpha\in\mathbb{R}$ the function $v^{-\alpha}$ satisfies
		\begin{equation}
		\label{lapvalpha1}
			\begin{split}
				\Delta v^{-\alpha} & = -\alpha v^{-\alpha-1}\Delta v + \alpha(\alpha+1) v^{-\alpha-2}|\nabla v|^2 \\
				& = -\alpha v^{-\alpha}\left(\overline{\mathrm{Ric}}(N,N) + \mathrm{trace}(A^2) + 2\frac{\rho''(\tau)}{\rho(\tau)}-(\alpha+1)\frac{|\nabla v|^2}{v^2}\right)
			\end{split}
		\end{equation}
	on $M$. Since $\pi_F^{\ast}g_F(N,N) = \rho(t)^{-2}(\overline{g} + dt\otimes dt)(N,N) = \rho(t)^{-2}(-1+\cosh^2\theta) = \rho(t)^{-2}\sinh^2\theta$,
	from (\ref{ricGRW}) and the first two inequalities in (\ref{curvbound}) we have
	\begin{equation}
		\label{curvbound2}
		\overline{\mathrm{Ric}}(N,N) + 2\frac{\rho''(t)}{\rho(t)} = \sinh^2\theta\left(\frac{\mathrm{K}^F(x)}{\rho(t)^2}-\frac{\rho''(t)}{\rho(t)}+\frac{\rho'(t)^2}{\rho(t)^2}\right) \geq 0
	\end{equation}
	and by (\ref{normgradvmax}) we deduce, for every $\alpha\in[-1,1]$,
	\begin{equation}
		\mathrm{trace}(A^2) - (\alpha+1)\frac{|\nabla v|^2}{v^2} \geq \frac{1-\alpha}{2}\mathrm{trace}(A^2) \geq 0.
	\end{equation}
	Inserting these inequalities into (\ref{lapvalpha1}) we find that for every $\alpha\in[0,1]$ the positive function $v^{-\alpha}$ is superharmonic on $M$. In particular, $1/v$ is a positive superharmonic function on $M$. If $(M,g)$ is complete, then it is parabolic because of its nonnegative Gaussian curvature, so $1/v$ is constant on $M$. Therefore $v$ is also constant and from (\ref{lapvmax}) we obtain
	\begin{equation} \label{vanishing}
		\overline{\mathrm{Ric}}(N,N) + 2\frac{\rho''(\tau)}{\rho(\tau)} \equiv 0, \qquad \mathrm{trace}(A^2) \equiv 0 \qquad \text{on } M
	\end{equation}
	in view of (\ref{curvbound2}). Hence, $\psi$ is totally geodesic.
	
	Suppose that $\psi(M)$ is not a spacelike slice: then $\tau$ is not constant on $M$ and by (\ref{normgradtau}) the hyperbolic angle $\theta$ is not identically null, so $M_0:=\{q\in M:\sinh^2\theta(q)\neq0\}$	is a nonempty open subset of $M$. Since $\overline{\eta}$ is a strictly increasing function on $I$, the function $\eta$ defined in (\ref{defeta}) is nonconstant. Moreover, equation (\ref{lapetamax}) reads
	\begin{equation}
		\label{lapetamax1}
		\Delta \eta = -2(\rho'\circ\overline{\eta}^{-1})(\eta)
	\end{equation}
	and the function $-2(\rho'\circ\overline{\eta}^{-1})$ is of class $C^1$ on its domain; hence the unique continuation property holds for equation (\ref{lapetamax1}), that is, $\eta$ is constant on some nonempty open subset of $M$ if and only if it is constant on $M$ (see Theorem A.5 of \cite{PRS2}). Therefore, $M_0$ is a dense open subset of $M$. Finally, $v=\rho(\tau)\cosh\theta\geq\rho(\tau)$ implies that $\rho$ is bounded on $\tau(M)\subseteq I$, as $v$ is constant. We set
	\begin{equation}
		\label{I0F0}
		I_0 = \{ t\in I : \rho(t)\rho''(t) - \rho'(t)^2 = C_2 \}, \qquad F_0 = \{ x\in F : \mathrm{K}^F(x) = C_2 \}.
	\end{equation}
	Let $q\in M_0$ be given and set $(t,x)=\psi(q)\in\overline{M}$. From (\ref{vanishing}) it follows that (\ref{curvbound2}) holds with the equality sign. Since $\sinh^2\theta(q)\neq0$, the same is true for the first two inequalities in (\ref{curvbound}). Therefore, $\tau(M_0)\subseteq I_0$ and $(\pi_F\circ\psi)(M_0)\subseteq F_0$. Note that $I_0$ and $F_0$ are closed in $I$ and $F$, respectively, and that $\tau$ is constant on each connected component of $M\setminus M_0$. Hence, $\tau(M) = \tau(\overline{M_0}) \subseteq I_0$, $(\pi_F\circ\psi)(M) = (\pi_F\circ\psi)(\overline{M_0}) \subseteq F_0$. As observed at the beginning of this section, $\rho''/\rho$ is constant on every interval contained in $I_0$. Since $M$ is connected, $\tau(M)$ is an interval and there exists $\overline{\kappa}\in\mathbb{R}$ such that
	\begin{equation}
		\label{rho''}
		\frac{\rho''(t)}{\rho(t)} = \overline{\kappa} \qquad \text{for each } t\in \tau(M).
	\end{equation}
	By the third inequality in (\ref{curvbound}), $\overline{\kappa}\geq0$. So far, we have proved that $\overline{M}$ has constant curvature $\overline{\kappa}\geq0$ in the cylinder
	\begin{equation}
		\label{Sigma}
		\Sigma = \tau(M) \times (\pi_F\circ\psi)(M) \subseteq I_0\times F_0 \subseteq I\times F.
	\end{equation}
	We conclude the proof by showing that $(\pi_F\circ\psi)(M)=F$ and that in case where $\overline{\kappa}=0$ it must be $\tau(M)=\mathbb{R}$ and $\rho'\equiv0$ on $\mathbb{R}$.
	
	First suppose that $\overline{\kappa}=0$. Then, $\rho'$ is constant on $\tau(M)$. If $\rho'\equiv C_1$ on $\tau(M)$ for a positive constant $C_1>0$, then $\eta$ is a nonconstant superharmonic function on $M$ because it satisfies equation (\ref{lapetamax1}), so it cannot be bounded above on the parabolic surface $M$. This implies that $\tau(M)$ is not bounded above, otherwise we would have
	\[
		\sup_M\eta \leq \int_{t_0}^{\sup\tau(M)} \rho(s)ds = (\sup\tau(M)-t_0)\,\rho(t_0) + \frac{C_1}{2}\,(\sup\tau(M)-t_0)^2 < +\infty.
	\]
	Since $\rho'\equiv C_1>0$ on $\tau(M)$ and $\tau(M)$ is not bounded above, we obtain that $\rho$ is not bounded on $\tau(M)$ and we reach a contradiction. Similarly, we conclude that $\rho'$ cannot be identically equal to a negative constant on $\tau(M)$, so we are left with the case where $\rho'\equiv0$ on $\tau(M)$. In this case, by (\ref{lapetamax1}) we have that $\eta$ is a nonconstant harmonic function on $M$ and therefore it is not bounded above nor below. Arguing as above we can show that $\tau(M)$ is not bounded above nor below, so $\mathbb{R}=\tau(M)\subseteq I\subseteq\mathbb{R}$ and we conclude that $I=\mathbb{R}$. By (\ref{I0F0}) and (\ref{Sigma}), since $\rho'\equiv0$ on $\tau(M)$ we deduce $C_2=0$ and $\mathrm{K}^F=0$ on $F_0$. If we set $F' = (\pi_F\circ\psi)(M)$ and we endow the surface $M$ with the metric $\sigma:=(\pi_F\circ\psi)^{\ast}g_F$, then $(M,\sigma)$ is complete because $\sigma = g+d\tau^2 \geq g$ and $(\pi_F\circ\psi):M\to F'$ is a local Riemannian isometry, so $(F',(g_F)|_{F'})$ is also complete and we conclude that $F=F'\subseteq F_0\subseteq F$. In particular, $F_0=F$ and $\mathrm{K}^F\equiv0$ on $F$.
	
	Now, suppose that (\ref{rho''}) holds with $\overline{\kappa}>0$. $\psi(M)$ is contained in the cylinder $\Sigma$ and $\psi$ is totally geodesic, so $M$ has constant positive Gaussian curvature $\overline{\kappa}$ and therefore it is compact by Bonnet theorem. In particular, from Theorem \ref{teosup1} it follows that $(M,g)$ is a round sphere. The map $\pi_F\circ\psi$ is continuous and open (being a local diffeomorphism), so $(\pi_F\circ\psi)(M)$ is compact and open in $F$. Since $F$ is connected, we conclude $(\pi_F\circ\psi)(M)=F$. Moreover, since the second of (\ref{curvbound}) holds with the equality sign for every $t\in\tau(M)$, we have $(\log\rho)''=C_2/\rho^2$ on $\tau(M)$. Suppose, by contradiction, that $C_2\leq0$. Then $\log\rho$ is concave and the sign of $\rho'$ is nonincreasing on $\tau(M)$. As $\tau(M)$ is compact, we can apply the last statement of Corollary \ref{teoGRW1} to deduce that $\rho'(\tau)\equiv0$ on $M$ and by (\ref{lapetamax1}) we get that $\eta$ is a nonconstant harmonic function on the compact surface $M$, contradiction. Therefore, $C_2>0$ and by (\ref{I0F0}) we conclude that $F=F_0$ has constant positive Gaussian curvature.
\end{demo}

We conclude this section with the following two results.

\begin{teor}
	\label{teodecc}
	Let $\overline{M} = I \times_{\rho} F$ be a GRW spacetime with $\rho'' \leq 0$. Let $\psi : M \longrightarrow \overline{M}$ be a complete, non-compact, maximal hypersurface in $\overline{M}$ such that, for some $o\in M$ and for some (hence, any) $a>0$,
	\begin{equation}
		\label{vl1}
		\lim_{r\to+\infty} \int_a^r \frac{dt}{\mathrm{Vol}(\partial B_t)} = +\infty,
	\end{equation}
	where $B_t$ is the geodesic ball of $(M,g)$ centered at $o$ with radius $t$. Then
	\begin{equation}
		\label{liinfr}
		\limsup_{r\to+\infty} \int_{B_r} \left( \overline{\mathrm{Ric}}(N, N) + \mathrm{trace}(A^2) \right) \geq 0.
	\end{equation}
\end{teor}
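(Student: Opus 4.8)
The plan is to work with the positive smooth function $v = -\overline{g}(T,N) = \rho(\tau)\cosh\theta$ introduced before Lemma \ref{comp1}, whose Laplacian in the maximal case is given by \rf{lapvmax}. Writing $q = \overline{\mathrm{Ric}}(N,N) + \mathrm{trace}(A^2)$ for brevity, equation \rf{lapvmax} reads $\Delta v = \left(q + m\rho''(\tau)/\rho(\tau)\right)v$. Since $v>0$ we may pass to $\phi = \log v$, and from $\Delta\phi = \Delta v/v - |\nabla\phi|^2$ together with the hypothesis $\rho''\leq0$ (so that $-m\rho''(\tau)/\rho(\tau)\geq0$) we obtain the pointwise inequality $q \geq \Delta\phi + |\nabla\phi|^2$ on $M$. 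The sign of $\rho''$ is exactly what makes the warping term drop with the favorable sign; passing to $\log v$ rather than keeping $v$ is what produces the crucial gradient term $|\nabla\phi|^2$.

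The mechanism that exploits the volume-growth assumption \rf{vl1} is an integral comparison of Riccati type. First I would integrate the last inequality over a geodesic ball $B_r$ (these exhaust $M$ as $r\to+\infty$, since $M$ is complete and non-compact) and apply the divergence theorem to get $\int_{B_r} q \geq G(r) + F(r)$, where $G(r) = \int_{\partial B_r} g(\nabla\phi,\nu)$ with $\nu$ the outward unit normal and $F(r) = \int_{B_r}|\nabla\phi|^2$. By the coarea formula $F$ is absolutely continuous with $F'(r) = \int_{\partial B_r}|\nabla\phi|^2$ for a.e.\ $r$, and by Cauchy--Schwarz on $\partial B_r$ one has $G(r)^2 \leq \Vol(\partial B_r)\,F'(r)$.

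I would then argue by contradiction: if \rf{liinfr} fails there are $\varepsilon>0$ and $R_0>0$ with $\int_{B_r} q \leq -\varepsilon$ for all $r\geq R_0$. Combined with $\int_{B_r} q \geq G(r)+F(r)$ this forces $G(r) \leq -\varepsilon - F(r) < 0$, whence $G(r)^2 \geq (\varepsilon+F(r))^2$ and therefore
\[
	\frac{F'(r)}{(\varepsilon+F(r))^2} \;\geq\; \frac{G(r)^2}{(\varepsilon+F(r))^2\,\Vol(\partial B_r)} \;\geq\; \frac{1}{\Vol(\partial B_r)} \qquad \text{for a.e. } r\geq R_0.
\]
Integrating from $R_0$ to $r$, the left-hand side telescopes to $\tfrac{1}{\varepsilon+F(R_0)} - \tfrac{1}{\varepsilon+F(r)} \leq \tfrac{1}{\varepsilon}$, while the right-hand side equals $\int_{R_0}^r \Vol(\partial B_t)^{-1}\,dt$, which tends to $+\infty$ by \rf{vl1} (the divergence being independent of the base point $a$). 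This contradiction yields \rf{liinfr}.

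Keeping $F(r)$ inside the estimate $G(r)\leq-\varepsilon-F(r)$ is essential: discarding the $|\nabla\phi|^2$ term would only give $F'(r)\geq\varepsilon^2/\Vol(\partial B_r)$, which forces $F(r)\to+\infty$ but produces no contradiction, whereas the self-improving bound $F'\geq(\varepsilon+F)^2/\Vol(\partial B_r)$ is precisely what makes $F'/(\varepsilon+F)^2$ integrable and clashes with \rf{vl1}. I expect the main obstacle to be the technical bookkeeping behind the divergence theorem and coarea formula on geodesic balls, namely working at regular values of the distance function, handling the a.e.\ statements and the merely Lipschitz boundary of $B_r$, and making the ODE comparison rigorous when $F$ is only absolutely continuous; all of these are standard but must be stated with the appropriate ``for a.e.\ $r$'' qualifications.
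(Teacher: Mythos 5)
Your proof is correct, and it takes a genuinely different route from the paper's. The paper derives from (\ref{lapvmax}) and $\rho''\leq0$ that $Lv\leq0$ for the positive function $v=-\overline{g}(T,N)$, invokes Lemma \ref{lemeq} to conclude $\lambda_1^L(M)\geq0$, shows via the eigenvalue monotonicity of Proposition \ref{propo2} that the radialized Cauchy problem $(wz')'+Awz=0$ admits a globally positive solution, and then cites Theorem 2.8 of \cite{MMR} as a black box to convert positivity of $z$ plus (\ref{vl1}) into $\liminf_{R\to\infty}\int_0^R Aw\leq0$, which is (\ref{liinfr}). You instead set $\phi=\log v$, obtain the pointwise Riccati-type inequality $\overline{\mathrm{Ric}}(N,N)+\mathrm{trace}(A^2)\geq\Delta\phi+|\nabla\phi|^2$ directly from (\ref{lapvmax}) and $\rho''\leq0$, and close the argument on the manifold itself with the divergence theorem, Cauchy--Schwarz on geodesic spheres, and the integration of $F'/(\varepsilon+F)^2$ against $\mathrm{Vol}(\partial B_r)^{-1}$. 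In effect your substitution $\phi=\log v$ plays on $M$ the role that the Riccati substitution $y=wz'/z$ plays in the ODE proof of the cited oscillation theorem, so you are reproving the relevant special case of that result rather than quoting it; your correct remark that the self-improving term $F(r)$ must be retained is exactly the point where the naive estimate fails. What the paper's route buys is modularity: the same spectral/ODE machinery is reused in Theorems \ref{teoindex1} and \ref{teoks}. What your route buys is a self-contained, more transparent argument whose only technical debt is the standard a.e.\ bookkeeping for the coarea formula and Gauss--Green on geodesic balls (sets of finite perimeter, cut locus of measure zero), which you correctly flag and which the paper itself takes for granted elsewhere.
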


\begin{demo}
	From (\ref{lapvmax}) we know that the positive function $v = -\overline{g}(\tcr{T}, N)$ satisfies
	\begin{equation}
		\label{lav1}
		Lv = \Delta v - \left( \overline{\mathrm{Ric}}(N, N) + \mathrm{trace}(A^2) \right) v = m \frac{\rho''(\tau)}{\rho(\tau)} v \leq 0.
	\end{equation}
	Therefore, by Lemma \ref{lemeq}, the operator $L$ satisfies $\lambda_1^L(M)\geq0$. Set
	\begin{equation}
		\label{lav2}
		w(r)=\mathrm{Vol}(\partial B_r), \qquad A(r) = - \frac{1}{w(r)} \int_{\partial B_r} \left( \overline{\mathrm{Ric}}(N,N) + \mathrm{trace}(A^2) \right)
	\end{equation}
	and consider the weak solution $z \in \mathrm{Lip}_{\mathrm{loc}}(\mathbb{R}_0^+)$ of the Cauchy problem
	\begin{equation}
		\label{CP2}
		\begin{cases}
			(w(r)z')' + A(r)w(r)z = 0 & \text{on } \mathbb R^+ \\
			z(0^+) = 1, \quad (wz')(0^+) = 0.
		\end{cases}
	\end{equation}
	We have $z>0$ on $\mathbb{R}^+_0$. If not, let $R_0$ be the first positive zero of $z$ and set $\varphi(x) = z(r(x))$ for $x\in B_{R_0}$, with $r(x)$ the distance from $x$ to $o$ in $(M,g)$. Since $z$ solves problem (\ref{CP2}), by Proposition \ref{propo2} and using the coarea formula as in the proof of Theorem \ref{teoindex1} we get
	$$
		0 \leq \lambda_1^L(M) < \lambda_1^L(B_{R_0}) \leq \frac{\int_0^{R_0} w(r)z'(r)^2 dr + \int_0^{R_0} A(r)w(r)z(r)^2 dr}{\int_0^{R_0} w(r)z(r)^2} = 0,
	$$
	contradiction. So, problem (\ref{CP2}) has a positive solution and (\ref{vl1}) holds. By Theorem 2.8 of \cite{MMR},
	\begin{equation}
		\label{lav3}
		0 \geq \liminf_{R \longrightarrow +\infty} \int_0^R A(s) w(s) ds = - \limsup_{R \longrightarrow + \infty} \int_{B_R} \left( \overline{\mathrm{Ric}}(N, N) + \mathrm{trace}(A^2) \right).
	\end{equation}
\end{demo}

\begin{teor}
	\label{teoGRW2}
	Let $\psi : M \longrightarrow \overline{M}$ be a complete stable maximal hypersurface in a GRW spacetime $\overline{M} = I \times_{\rho} F$ and let $\tau$, $v$ be as above.
	\begin{itemize}[noitemsep,nolistsep]
		\item [i)] If $M$ is compact, then either $\rho''(\tau)\equiv0$ on $M$ or $\rho''(\tau)$ attains both positive and negative values on $M$.
		\item [ii)] If $M$ is non-compact and for some $o\in M$ the function $v$ satisfies
		\begin{equation}
			\label{vl2}
			\lim_{r\to+\infty} \int_a^r \left( \int_{\partial B_s} v^2 \right)^{-1} ds = +\infty
		\end{equation}
		for some (hence any) $a>0$, where $B_s$ is the geodesic ball of $(M,g)$ centered at $o$ with radius $s$, then either $\rho''(\tau)\equiv0$ on $M$ or $\rho''(\tau)$ attains negative values at some points of $M$.
	\end{itemize}
\end{teor}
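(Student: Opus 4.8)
The plan is to use the single structural fact, recorded in (\ref{lapvmax}), that the positive function $v=-\overline{g}(T,N)$ is a ground state of the stability operator $L=\Delta-\left(\overline{\mathrm{Ric}}(N,N)+\mathrm{trace}(A^2)\right)$: indeed (\ref{lapvmax}) reads $Lv=m\,\frac{\rho''(\tau)}{\rho(\tau)}\,v$ on $M$. Performing the ground-state substitution $\phi=v\xi$ with $\xi\in C_c^\infty(M)$ in the stability inequality $\int_M\big(|\nabla\phi|^2+(\overline{\mathrm{Ric}}(N,N)+\mathrm{trace}(A^2))\phi^2\big)\geq0$, expanding $|\nabla(v\xi)|^2$ and integrating the cross term $2\xi v\,g(\nabla v,\nabla\xi)$ by parts against $\Delta v$, the potential cancels and one is left with
\begin{equation}
	\label{starineq}
	m\int_M \frac{\rho''(\tau)}{\rho(\tau)}\,v^2\xi^2 \;\leq\; \int_M v^2|\nabla\xi|^2 \qquad \text{for every } \xi\in C_c^\infty(M).
\end{equation}
Both parts then follow by feeding suitable test functions $\xi$ into (\ref{starineq}), complemented in part i) by the slab estimate of Theorem \ref{teosu}.

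For part i), with $M$ compact the constant $\xi\equiv1$ is admissible and (\ref{starineq}) gives $\int_M\frac{\rho''(\tau)}{\rho(\tau)}v^2\leq0$; since $\rho>0$ and $v>0$, if $\rho''(\tau)\geq0$ on $M$ then necessarily $\rho''(\tau)\equiv0$, so a non-vanishing $\rho''(\tau)$ must take a negative value. To produce a positive value as well I would switch to the slab geometry: a compact $M$ is complete and the weak maximum principle for the Laplacian holds on it trivially, while $\psi(M)\subseteq[\tau_\ast,\tau^\ast]\times F$ with $\tau_\ast=\min_M\tau$, $\tau^\ast=\max_M\tau$. Theorem \ref{teosu} then yields $\rho'(\tau^\ast)\geq0$ and $\rho'(\tau_\ast)\leq0$; if $\rho''(\tau)\leq0$ on $M$, then $\rho'$ is non-increasing on $[\tau_\ast,\tau^\ast]$, so $\rho'(\tau_\ast)\geq\rho'(\tau^\ast)$, which together with the previous signs squeezes $\rho'\equiv0$ and hence $\rho''(\tau)\equiv0$. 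Thus a non-vanishing $\rho''(\tau)$ cannot keep a single sign, proving the dichotomy.

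For part ii) I would argue by contradiction, assuming $\rho''(\tau)\geq0$ on $M$ with $\rho''(\tau)\not\equiv0$, which is exactly the negation of the desired alternative. Then for $R$ large enough $c:=m\int_{B_R}\frac{\rho''(\tau)}{\rho(\tau)}v^2>0$. Writing $w(s)=\int_{\partial B_s}v^2$, I would take radial test functions $\xi$ equal to $1$ on $B_R$, vanishing outside $B_{R'}$, with annular profile $\xi(r)=\left(\int_R^{R'}\frac{ds}{w(s)}\right)^{-1}\int_r^{R'}\frac{ds}{w(s)}$. By the coarea formula $\int_M v^2|\nabla\xi|^2=\left(\int_R^{R'}\frac{ds}{w(s)}\right)^{-1}$, which tends to $0$ as $R'\to+\infty$ precisely because of hypothesis (\ref{vl2}). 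Since $\rho''(\tau)\geq0$ and $\xi\equiv1$ on $B_R$, the left-hand side of (\ref{starineq}) is at least $c>0$ for every such $\xi$; letting $R'\to+\infty$ contradicts (\ref{starineq}), so either $\rho''(\tau)\equiv0$ or $\rho''(\tau)$ is negative somewhere.

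The step I expect to require the most care is the cutoff construction in part ii): one must verify that $w(s)=\int_{\partial B_s}v^2$ is finite and measurable for a.e.\ $s$, so that the profile $\xi$ is a legitimate (merely Lipschitz) competitor and the coarea computation is valid, and check that (\ref{vl2}) is exactly the divergence condition forcing $\int_M v^2|\nabla\xi|^2\to0$; extending (\ref{starineq}) from smooth to Lipschitz $\xi$ is routine by density. A secondary caveat concerns the degenerate subcase $\tau_\ast=\tau^\ast$ of part i), i.e.\ when $\psi(M)$ is a single slice $\{t_0\}\times F$: there Theorem \ref{teosu} provides no monotonicity and $\rho''(\tau)\equiv\rho''(t_0)$ is constant, so the alternative that $\rho''(\tau)$ takes both signs is to be understood for non-constant $\tau$, the stability inequality alone still forcing $\rho''(t_0)\leq0$.
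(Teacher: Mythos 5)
Your argument is correct, but it takes a genuinely different route from the paper's. The paper invokes Lemma \ref{lemeq} to produce a positive solution $u$ of $Lu=0$ on all of $M$, sets $\varphi=u/v$ and computes the exact identity $\mathrm{div}(v^{2}\nabla\varphi)=-m\,\tfrac{\rho''(\tau)}{\rho(\tau)}\,v^{2}\varphi$; part i) then follows by integrating this identity over the closed manifold (the resulting \emph{equality} $\int_M \tfrac{\rho''(\tau)}{\rho(\tau)}v^2\varphi=0$ kills both signs at once), and part ii) by a Liouville theorem for the weighted operator $\mathrm{div}(v^{2}\nabla\,\cdot\,)$ under (\ref{vl2}) (Theorem 4.14 of \cite{AMR}). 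You instead substitute $\phi=v\xi$ directly into the stability form and integrate by parts against (\ref{lapvmax}), getting the one-sided inequality $m\int_M\tfrac{\rho''(\tau)}{\rho(\tau)}v^{2}\xi^{2}\leq\int_M v^{2}|\nabla\xi|^{2}$. This handles the sign $\rho''(\tau)\geq0$ in both parts ($\xi\equiv1$ when $M$ is closed; the capacity-type cutoffs built from $w(s)=\int_{\partial B_s}v^{2}$ otherwise, which is exactly the parabolicity of $(M,v^{2}dV)$ that the paper's Liouville theorem encodes), but it forces you to treat the sign $\rho''(\tau)\leq0$ in part i) by a separate argument via Theorem \ref{teosu}; that detour is sound whenever $\tau$ is non-constant, since then $\tau(M)=[\tau_\ast,\tau^\ast]$ with $\tau_\ast<\tau^\ast$ and a non-increasing $\rho'$ pinched between $\rho'(\tau_\ast)\leq0$ and $\rho'(\tau^\ast)\geq0$ must vanish identically. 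Your closing caveat about the slice case is not a defect of your proof but a genuine observation about the statement: a totally geodesic slice $\{t_0\}\times F$ with $F$ closed, $\rho'(t_0)=0$ and $\rho''(t_0)<0$ is strictly stable, with $\lambda_1^{L}(M)=-m\rho''(t_0)/\rho(t_0)>0$, yet $\rho''(\tau)\equiv\rho''(t_0)<0$ takes only negative values, so part i) as literally stated fails there. The paper's own proof breaks at exactly this point, because on a closed manifold with $\lambda_1^{L}(M)>0$ there is no positive solution of $Lu=0$, so condition (2) of Lemma \ref{lemeq} is not available; your version of the argument makes this boundary of validity visible, which is a net gain, while the two approaches are otherwise of comparable strength.
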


\begin{demo}
	Since $M$ is stable, from Lemma \ref{lemeq} there exists a positive function $u$ satisfying $\Delta u = \left( \overline{\mathrm{Ric}}(N,N) + \mathrm{trace}(A^2) \right)u$ on $M$. Set $\varphi = u/v$. By (\ref{lapvmax}), a direct computation shows that
	\begin{equation}
		\label{phieq}
		\frac{\mathrm{div}(v^2\nabla\varphi)}{v^2} = \Delta\varphi + 2g\left(\frac{\nabla v}{v},\nabla\varphi\right) = \frac{\Delta u}{v} - \frac{u\Delta v}{v^2} = -m\frac{\rho''(\tau)}{\rho(\tau)}\varphi \qquad \text{on } M.
	\end{equation}
	
	i) Suppose that $M$ is compact. We apply the divergence theorem to obtain
	$$
		0 = \int_M \mathrm{div}(v^2\nabla\varphi) = -m \int_M \frac{\rho''(\tau)}{\rho(\tau)}v^2\varphi.
	$$
	Since $v^2\varphi/\rho(\tau)>0$ on $M$, if $\rho''(\tau)\geq0$ or $\rho''(\tau)\leq0$ on $M$ then it must be $\rho''(\tau)\equiv0$.
	
	ii) Suppose that $M$ is non-compact. (\ref{vl2}) ensures that any positive function $w\in C^2(M)$ such that $\mathrm{div}(v^2\nabla w) \leq 0$ on $M$ must be constant, see Theorem 4.14 of \cite{AMR}. Since $\varphi>0$, if $\rho''(\tau) \geq 0$ on $M$ then from (\ref{phieq}) we deduce that $\varphi$ is constant and therefore $\rho''(\tau)\equiv0$.
\end{demo}

\begin{rem}
	\normalfont
	Let $\psi: M \longrightarrow \overline{M} = I \times_{\rho} F$ be a maximal hypersurface such that $\tau(M) \subseteq T_0 = \{t\in I:\rho''(t)\leq 0\}$. Since $v >0$ on $M$, by condition 3 of Lemma \ref{lemeq} and (\ref{lapvmax}) we immediately deduce that $M$ is stable. Similarly, if $\tau(M \setminus \overline{P}) \subseteq T_0$ for some relatively compact open set $P$, then $\psi$ is stable at infinity. For $T_0 = I$, stability of $\psi$ is observed in Theorem 7 of \cite{FRS}.
\end{rem}

\section{Higher order mean curvatures in Robertson-Walker spacetimes}

\label{sho}

In this section we will consider spacelike hypersurfaces in spacetimes of constant sectional curvature. Let $\psi: M \longrightarrow \overline{M}$ be such a hypersurface and suppose that the $(k+1)$-th mean curvature function $H_{k+1}$ vanishes on $M$, for some $1\leq k\leq m-2$. Then, the $k$-th Newton tensor $P_k$ corresponding to the shape operator $A$ of $\psi$ is positive definite if and only if $H_k>0$ and $\mathrm{rank}(A)>k$ on $M$, see Proposition 6.27 of \cite{BMR}. Hence, the differential operator $L_k$ defined in (\ref{Lkdef}) is elliptic if and only if these conditions are satisfied. Furthermore, since $\overline{M}$ has constant curvature, $L_k$ can be written in divergence form, that is,
\begin{equation}
	L_k u = \mathrm{div}(P_k(\nabla u)) \qquad \text{for every } u \in C^2(M).
\end{equation}
If the ambient spacetime has a Robertson-Walker spacetime structure $\overline{M} = I\times_{\rho}F$, then we can consider the functions $\eta$ and $v$ on $M$ as in the previous section. The action of $L_k$ on $\eta$ and $v$ is given by identities (\ref{Lketa}) and (\ref{Lkv}) below, also proved in Lemma 4.1 of \cite{AC} and Lemma 3.1 of \cite{CC}.

\begin{lema}
	\label{comp2}
	Let $\psi: M \longrightarrow \overline{M}$ be a spacelike hypersurface in a Robertson-Walker spacetime $\overline{M} = I \times_\rho F$ of constant curvature and let $\eta$ and $v$ be defined in (\ref{defeta}) and (\ref{defv}). For $0\leq k\leq m-1$,
	\begin{align}
		\label{Lketa}
		L_k \eta & = -c_k \rho'(\tau) H_k + c_k H_{k+1}v, \\
		\label{Lkv}
		L_k v & = \mathrm{trace}(A^2P_k) v - \binom{m}{k+1} g(T^{\top},\nabla H_{k+1}) - c_k H_{k+1}\rho'(\tau).
	\end{align}
\end{lema}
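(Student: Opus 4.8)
The plan is to mimic the computation carried out in Lemma \ref{comp1} for the maximal case, replacing the trace with the $P_k$-weighted trace throughout. The key structural fact is that both $L_k\eta = \mathrm{trace}(P_k\circ\mathrm{Hess}(\eta))$ and $L_k v = \mathrm{trace}(P_k\circ\mathrm{Hess}(v))$, so I only need to compute the Hessians of $\eta$ and $v$ and then contract against $P_k$ using the identities of Lemma \ref{newtop}. First I would recall from \rf{gradeta} and \rf{gradv} that $\nabla\eta = -T^{\top}$ and $\nabla v = -A\nabla\eta = AT^{\top}$, so that the Hessians of $\eta$ and $v$ are given by $X\mapsto -\nabla_X T^{\top}$ and $X\mapsto \nabla_X(AT^{\top})$, which are precisely the expressions \rf{hesseta} and \rf{hessv} already established in the proof of Lemma \ref{comp1}. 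These hold for any spacelike hypersurface in a GRW spacetime, so they are available here without reproving them.

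For \rf{Lketa} I would contract \rf{hesseta}, namely $\nabla_X T^{\top} = vAX + \rho'(\tau)X$, against $P_k$ with a sign: since $\mathrm{Hess}(\eta)(X) = -vAX - \rho'(\tau)X$, taking the $P_k$-trace gives $L_k\eta = -v\,\mathrm{trace}(P_k\circ A) - \rho'(\tau)\,\mathrm{trace}(P_k)$. Applying the first two identities of \rf{eq:newtop}, $\mathrm{trace}(P_k) = c_kH_k$ and $\mathrm{trace}(P_k\circ A) = -c_kH_{k+1}$, yields exactly $L_k\eta = -c_k\rho'(\tau)H_k + c_kH_{k+1}v$, which is \rf{Lketa}.

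For \rf{Lkv} I would contract \rf{hessv},
$$
\nabla_X(AT^{\top}) = (\nabla_{T^{\top}}A)X - (\overline{\mathrm R}(X,T^{\top})N)^{\top} + vA^2X + \rho'(\tau)AX,
$$
against $P_k$. The term $vA^2X$ contributes $v\,\mathrm{trace}(A^2P_k)$ and the term $\rho'(\tau)AX$ contributes $\rho'(\tau)\,\mathrm{trace}(P_k\circ A) = -c_k\rho'(\tau)H_{k+1}$ by \rf{eq:newtop}. The Codazzi term $(\nabla_{T^{\top}}A)X$ contributes $\mathrm{trace}(P_k\circ\nabla_{T^{\top}}A) = -\binom{m}{k+1}\nabla_{T^{\top}}H_{k+1} = -\binom{m}{k+1}g(T^{\top},\nabla H_{k+1})$ by the third identity of \rf{eq:newtop}. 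The only subtle point, and the step I expect to be the main obstacle, is showing that the curvature term $\mathrm{trace}\bigl(P_k\circ(X\mapsto(\overline{\mathrm R}(X,T^{\top})N)^{\top})\bigr)$ vanishes; this is where the constant-curvature hypothesis on $\overline{M}$ enters decisively. When $\overline{M}$ has constant curvature $\overline{\kappa}$, the ambient curvature operator is $\overline{\mathrm R}(X,T^{\top})N = \overline{\kappa}\bigl(\overline{g}(T^{\top},N)X - \overline{g}(X,N)T^{\top}\bigr)$; since both $X$ and $T^{\top}$ are tangent to $M$ we have $\overline{g}(T^{\top},N)=\overline{g}(X,N)=0$, so this term vanishes identically and no separate trace computation is needed. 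Collecting the three surviving contributions gives
$$
L_k v = v\,\mathrm{trace}(A^2P_k) - \binom{m}{k+1}g(T^{\top},\nabla H_{k+1}) - c_k\rho'(\tau)H_{k+1},
$$
which is \rf{Lkv}. Finally I would remark that the divergence-form identity $L_k u = \mathrm{div}(P_k(\nabla u))$, valid because $P_k$ is divergence-free in constant curvature, guarantees the equivalence of the trace-of-Hessian and divergence expressions used throughout, so the two formulas are fully consistent with \rf{Lkdef}.
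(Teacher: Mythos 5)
Your proof is correct and follows essentially the same route as the paper's: it reuses the Hessian formulas (\ref{hesseta}) and (\ref{hessv}) established in Lemma \ref{comp1}, contracts them against $P_k$ via the trace identities of Lemma \ref{newtop}, and disposes of the ambient curvature term using the constant-curvature hypothesis. Your explicit sign bookkeeping, $\mathrm{Hess}(\eta)(X) = -\nabla_X T^{\top} = -vAX - \rho'(\tau)X$, is in fact more careful than the display (\ref{hesseta1}) in the paper's proof, which omits that minus sign (a harmless typo, since the stated formula (\ref{Lketa}) is the one your signs produce).
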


\begin{demo}
	In the proof of Lemma \ref{comp1} we have already calculated the second covariant derivatives of $\eta$ and $v$. More precisely, from (\ref{hesseta}) we can write
	\begin{align}
		\label{hesseta1}
		\nabla_X \nabla\eta & = vAX + \rho'(\tau)X, \\
		\label{hessv1}
		\nabla_X \nabla v & = (\nabla_{T^{\top}}A)X - \left(\overline{\mathrm{R}}\left(X,T^{\top}\right)N\right)^{\top} + vA^2 X + \rho'(\tau) AX
	\end{align}
	for every $X\in\mathfrak X(M)$. Recalling the definition (\ref{Lkdef}) of $L_k$, formulas (\ref{Lketa}) and (\ref{Lkv}) follow from Lemma \ref{newtop} and from the fact that $g(\overline{\mathrm{R}}\left(X,Y\right)N,Z) = \overline{\mathrm{Riem}}(Z,N,X,Y) = 0$ for every $X,Y,Z\in\mathfrak X(M)$ since $\overline{M}$ has constant curvature.
\end{demo}

The next theorem collects some observations about the $k$-stability of hypersurfaces with zero $(k+1)$-th mean curvature and positive definite $k$-th Newton tensor.

\begin{teor}
	\label{teoks}
	Let $\psi: M \longrightarrow \overline{M}$ be a complete spacelike hypersurface with zero $(k+1)$-th mean curvature, for some $1\leq k\leq m-2$, in a spacetime $\overline{M}$ of dimension $m+1$ and constant curvature $\overline{\kappa}$. Suppose that $H_k > 0$ on $M$ and that $\mathrm{rank}(A)>k$ on $M$.
	\begin{itemize}[noitemsep,nolistsep]
		\item [i)] If $\overline{\kappa}\leq0$, then $M$ is non-compact and $\psi$ is $k$-stable.
		\item [ii)] If $\overline{\kappa}>0$ and $M$ is compact and simply connected, then $\psi$ is not $k$-stable.
		\item [iii)] If $\overline{\kappa}>0$ and we assume that $M$ is non-compact and that, for some $o\in M$ and for some (hence any) $a>0$,
		\begin{equation}
			\label{ksta}
			\begin{split}
			& \lim_{r\to+\infty} \int_a^r \left( \int_{\partial B_r} H_k \right)^{-1} = +\infty, \\
			& \lim_{r \longrightarrow + \infty} \int_{B_r} \left( \overline{\kappa} \ \mathrm{trace}(P_k) - \mathrm{trace}(A^2 P_k) \right) = + \infty,
			\end{split}
		\end{equation}
		where $B_r$ is the geodesic ball of $(M,g)$ with radius $r$ centered at $o$, then $\widetilde{L}_k$ has infinite index.
	\end{itemize}
\end{teor}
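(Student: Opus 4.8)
The plan is to handle all three cases through the single potential $F = \mathrm{trace}(A^2 P_k) - \overline{\kappa}\,\mathrm{trace}(P_k)$ of the $k$-stability operator $\widetilde{L}_k = L_k - F$, exploiting that the hypotheses $H_k>0$ and $\mathrm{rank}(A)>k$ make $P_k$ positive definite (Corollary 2.3 of \cite{HL}). Two elementary facts will be used throughout: $\mathrm{trace}(P_k)=c_kH_k>0$ by Lemma \ref{newtop}, and $\mathrm{trace}(A^2P_k)>0$ on all of $M$, because $A$ and $P_k$ are simultaneously diagonalizable with $P_k>0$ and $A\neq0$ (as $\mathrm{rank}(A)>k\geq1$). \emph{Part (i).} When $\overline{\kappa}\leq0$ both terms of $F$ are positive, so $F>0$ on $M$; by the variational characterization (\ref{kst1}) the Rayleigh quotient is nonnegative for every test function, whence $\lambda_1^{\widetilde{L}_k}(M)\geq0$ and $\psi$ is $k$-stable. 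To rule out compactness I would argue as in Theorem \ref{teocpt}: if $M$ were compact, then $\lambda_1^{\widetilde{L}_k}(M)\geq0$ together with Lemma \ref{lemeq} yields $u>0$ with $\widetilde{L}_k u=0$, i.e.\ $L_k u = Fu$; since $L_k$ is in divergence form, integrating gives $0=\int_M L_k u=\int_M Fu$, contradicting $Fu>0$.

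\emph{Part (ii).} Here the idea is to produce a global positive function on which $\widetilde{L}_k$ has a definite sign. Since $\overline{M}$ has constant curvature $\overline{\kappa}>0$, its developing map identifies the universal cover with the de Sitter model $\mathbb{S}^{m+1}_1(\overline{\kappa})=\mathbb{R}\times_{\rho}\mathbb{S}^m$, and because $M$ is simply connected $\psi$ lifts to an isometric immersion into this Robertson–Walker model with the same $A$, $P_k$ and $\widetilde{L}_k$. I may therefore use the functions of Section \ref{segrw}: with $v=-\overline{g}(T,N)>0$, identity (\ref{Lkv}) and $H_{k+1}\equiv0$ give $L_k v=\mathrm{trace}(A^2P_k)v$, hence $\widetilde{L}_k v=\overline{\kappa}\,\mathrm{trace}(P_k)\,v$. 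Using $v$ as test function in (\ref{kst1}) (legitimate as $M$ is compact) and integrating by parts, the numerator equals $-\overline{\kappa}\int_M\mathrm{trace}(P_k)v^2<0$, so $\lambda_1^{\widetilde{L}_k}(M)<0$ and $\psi$ is not $k$-stable.

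\emph{Part (iii)} is the substantial case, and I would follow the oscillation scheme of Theorems \ref{teoindex1} and \ref{teodecc}. Writing $\nu=\nabla r$ for the unit radial field of $(M,g)$, set $w(r)=\int_{\partial B_r}\langle P_k\nu,\nu\rangle$ and $\mathcal{A}(r)=w(r)^{-1}\int_{\partial B_r}(\overline{\kappa}\,\mathrm{trace}(P_k)-\mathrm{trace}(A^2P_k))$, and consider the Cauchy problem $(wz')'+\mathcal{A}wz=0$, $z(0^+)=1$, $(wz')(0^+)=0$, whose weak solution exists by Proposition 4.2 of \cite{BMR}. The coarea formula gives $\int_M g(P_k\nabla\phi,\nabla\phi)=\int_0^\infty\phi'(r)^2w(r)\,dr$ for radial $\phi$, and $\int_0^r\mathcal{A}w=\int_{B_r}(\overline{\kappa}\,\mathrm{trace}(P_k)-\mathrm{trace}(A^2P_k))$. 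The two hypotheses (\ref{ksta}) are exactly what trigger oscillation: from $\langle P_k\nu,\nu\rangle\leq\mathrm{trace}(P_k)=c_kH_k$ one gets $w(r)\leq c_k\int_{\partial B_r}H_k$, so the first limit forces $1/w\notin L^1(+\infty)$, while the second forces $\int_0^r\mathcal{A}w\to+\infty$; by Corollary 2.9 of \cite{MMR} the solution $z$ is oscillatory.

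It remains to convert oscillation into infinite index. Given any relatively compact $\Omega\subseteq B_{R_0}$, oscillation yields consecutive zeros $R_0<R_1<R_2$ of $z$; the radial function $\varphi(x)=z(r(x))$ on the annulus $B_{R_2}\setminus\overline{B_{R_1}}$ vanishes on the boundary, and inserting the weak equation for $z$ into the coarea identities above shows that its Rayleigh quotient is exactly $0$, so $\lambda_1^{\widetilde{L}_k}(B_{R_2}\setminus\overline{B_{R_1}})\leq0$. As this annulus lies strictly inside $M\setminus\overline{\Omega}$, strict monotonicity (Proposition \ref{propo2}) gives $\lambda_1^{\widetilde{L}_k}(M\setminus\overline{\Omega})<0$; since $\Omega$ is arbitrary, $\psi|_{M\setminus K}$ is not $k$-stable for every compact $K$, which is precisely the assertion that $\widetilde{L}_k$ has infinite index. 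The main obstacle I anticipate is the bookkeeping in this last passage: reconciling the natural weight $\langle P_k\nu,\nu\rangle$ of the divergence-form operator with the quantity $H_k$ that appears in (\ref{ksta}), and justifying the strict eigenvalue inequality on the non-relatively-compact domain $M\setminus\overline{\Omega}$ by an exhaustion argument.
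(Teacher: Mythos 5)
Your proposal is correct and, for parts (i) and (iii), follows essentially the same route as the paper: positivity of $P_k$ and of $\mathrm{trace}(A^2P_k)$ gives $k$-stability and (via Lemma \ref{lemeq} plus the divergence form of $L_k$) non-compactness when $\overline{\kappa}\leq0$; and the oscillation scheme with Proposition 4.2 of \cite{BMR}, Corollary 2.9 of \cite{MMR} and the strict monotonicity of Proposition \ref{propo2} yields infinite index in case (iii). The only differences are cosmetic or mildly advantageous. In (iii) the paper takes the weight $v_k(r)=\int_{\partial B_r}c_kH_k=\int_{\partial B_r}\mathrm{trace}(P_k)$ and uses the pointwise bound $g(P_kX,X)\leq\mathrm{trace}(P_k)|X|^2$ to dominate the Dirichlet term, whereas you take the ``natural'' weight $\int_{\partial B_r}g(P_k\nu,\nu)$, get exact equality from the coarea formula, and then invoke the same pointwise bound to deduce $1/w\notin L^1(+\infty)$ from the first hypothesis in (\ref{ksta}); the two bookkeeping choices are interchangeable, and the issue you flag about applying the strict inequality of Proposition \ref{propo2} when the larger domain $M\setminus\overline{\Omega}$ is not relatively compact is present in the paper's own argument in exactly the same form (there, as here, the non-strict inequality is immediate from the variational characterization and strictness is obtained by comparing with a slightly larger relatively compact annulus). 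In (ii) your argument is genuinely simpler: after lifting to the de Sitter model, the paper assumes $k$-stability, extracts a positive $u$ with $\widetilde{L}_ku=0$ from Lemma \ref{lemeq}, forms the quotient $\varphi=u/v$ and integrates a weighted identity to reach a contradiction; you instead observe that (\ref{Lkv}) with $H_{k+1}\equiv0$ gives $\widetilde{L}_kv=\overline{\kappa}\,\mathrm{trace}(P_k)v$ and simply insert the positive function $v$ into the Rayleigh quotient (\ref{kst1}), obtaining $\lambda_1^{\widetilde{L}_k}(M)<0$ directly, with no appeal to Lemma \ref{lemeq} or to a logarithmic-quotient computation. (Incidentally, your integration by parts $\int_Mg(P_k\nabla v,\nabla v)=-\int_M\mathrm{trace}(A^2P_k)v^2$ on a compact $M$ shows that the hypotheses of case (ii) are in fact never simultaneously realizable, so both proofs establish a vacuously true statement; this is a feature of the theorem, not a defect of your argument.)
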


\begin{demo}
	Firstly, note that \tcr{$H_{k+1}\equiv0$ and $\mathrm{rank}(A)>k$ on $M$} guarantee that the self-adjoint operator $P_k$ is positive definite (see Proposition 6.27 of \cite{BMR}). Since $A^2$ and $P_k$ are simultaneously diagonalizable, $A^2P_k$ is also self-adjoint and positive definite and therefore
	\begin{equation}
		\label{postrace}
		\mathrm{trace}(P_k) > 0, \qquad \mathrm{trace}(A^2 P_k) > 0.
	\end{equation}
	Moreover, the operator $L_k$ is elliptic and it can be put in divergence form due to the fact that $\overline{M}$ has constant sectional curvature $\overline{\kappa}$, that is, we have
	\begin{equation}
		\label{Lkdiv1}
		L_k u = \mathrm{div}(P_k(\nabla u)) \qquad \text{for every } u\in C^2(M).
	\end{equation}
	
	i) Suppose that $\overline{\kappa} \leq 0$. By (\ref{postrace}) it follows that $\left( \overline{\kappa} \ \mathrm{trace}(P_k) - \mathrm{trace}(A^2 P_k) \right) \leq 0$, so
	$$
	\int_M -\phi \widetilde{L}_k \phi = \int_M g(P_k(\nabla\phi),\nabla\phi) - \left( \overline{\kappa} \ \mathrm{trace}(P_k) - \mathrm{trace}(A^2 P_k) \right)\phi^2 \geq 0
	$$
	for every $\phi \in C^{\infty}_c(M)$, and $\psi$ is $k$-stable by definition. By Lemma \ref{lemeq}, there exists a positive function $u\in C^{\infty}(M)$ such that $L_k u = (\mathrm{trace}(A^2 P_k) - \overline{\kappa} \ \mathrm{trace}(P_k))u$ on $M$. Suppose, by contradiction, that $M$ is compact. By (\ref{Lkdiv1}) and the divergence theorem,
	$$
	0 = \int_M L_k u = \int_M (\mathrm{trace}(A^2 P_k) - \overline{\kappa} \ \mathrm{trace}(P_k))u.
	$$
	From (\ref{postrace}) and $\overline{\kappa}\leq 0$ it then follows that $u\equiv0$ on $M$, contradiction.
	
	ii) Suppose, by contradiction, that $\overline{\kappa}>0$, $M$ is compact and simply connected and $\psi$ is $k$-stable. Let $\pi : \widetilde{\overline{M}} \longrightarrow \overline{M}$ be the Lorentzian universal covering of $\overline{M}$. $\widetilde{\overline{M}}$ is isometric to de Sitter spacetime of dimension $m+1$ and curvature $\overline{\kappa}$, which in turn is isometric to the GRW spacetime $\mathbb{R}\times_{\cosh(\sqrt{\overline{\kappa}}t)}\mathbb{S}^m(\overline{\kappa})$. $M$ is simply connected, so for every $p\in M$, $\tilde p \in \pi^{-1}(p)$ there exists a unique immersion $\widetilde{\psi}:M\longrightarrow\widetilde{\overline{M}}$ such that $\widetilde{\psi}(p)=\tilde p$ and $\pi\circ\widetilde{\psi}=\psi$. The shape operator induced by $\widetilde{\psi}$ is equal to $A$, up to a change of sign, because $\pi$ is a local isometry. Then, by Lemma \ref{comp2}, $M$ supports a positive smooth functions $v$ satisfying $L_k v = \mathrm{trace}(A^2P_k)v$. Since $\psi$ is $k$-stable, by Lemma \ref{lemeq}, $M$ also supports a positive smooth function $u$ satisfying $L_k u = (\mathrm{trace}(A^2 P_k) - \overline{\kappa} \ \mathrm{trace}(P_k))u$. A direct computation shows that the positive function $\varphi = u/v$ satisfies
	$$
		\frac{\mathrm{div}(v^2 P_k(\nabla\varphi))}{v^2} = L_k \varphi + 2g\left(\frac{\nabla v}{v},P_k(\nabla\varphi)\right) = -\overline{\kappa} \ \mathrm{trace}(P_k)\varphi.
	$$
	Since $M$ is compact, we have
	$$
		0 = \int_M \mathrm{div}(v^2 P_k(\nabla\varphi)) = - \int_M \overline{\kappa} \ \mathrm{trace}(P_k) v^2\varphi.
	$$
	From (\ref{postrace}) and $\overline{\kappa}>0$ it then follows that $v^2\varphi \equiv 0$ on $M$, contradiction.
	
	iii) Suppose, by contradiction, that $\overline{\kappa}>0$, that condition (\ref{ksta}) holds for some $o\in M$ and that $\widetilde{L}_k$ has finite index. Then there exists a relatively compact open set $\Omega\subseteq M$ such that
	\begin{equation}
		\label{ksta0}
		\lambda_1^{\widetilde{L}_k}(M\setminus\overline{\Omega}) \geq 0.
	\end{equation}
	Since $(M,g)$ is complete, there exists $R>0$ such that $\overline{\Omega} \subseteq B_R$. Next we define
	\begin{equation}
		\label{ksta1}
		v_k(r) = \int_{\partial B_r} c_k H_k, \qquad A(r) = \frac{1}{v_k(r)} \int_{\partial B_r} \left( \overline{\kappa} \ \mathrm{trace}(P_k) - \mathrm{trace}(A^2 P_k) \right).
	\end{equation}
	We consider the Cauchy problem
	\begin{equation}
		\label{ksta2}
		\begin{cases}
			(v_k(r) z')' + A(r) v_k(r) z = 0 \ \text{on} \ \mathbb{R}^+ \\
			z(0^+) = Z_0 > 0, \ w(0^+) z'(0^+) = 0 \in \mathbb{R}.
		\end{cases}
	\end{equation}
	Since $v_k \in L_{\mathrm{loc}}^\infty (\mathbb{R}_0^+)$, there exists a solution $z$ of (\ref{ksta2}) with $z \in \mathrm{Lip}_{\mathrm{loc}}(\mathbb{R}_0^+)$ due to Proposition 4.2 of \cite{BMR}. Moreover, from the coarea formula and (\ref{ksta}) we obtain
	\begin{equation}
		\label{ksta3}
		\liminf_{r \longrightarrow + \infty} \int_0^r A(s) v_k(s) ds = \liminf_{r \longrightarrow + \infty} \int_{B_r} \left( \overline{\kappa} \ \mathrm{trace}(P_k) - \mathrm{trace}(A^2 P_k) \right) = + \infty.
	\end{equation}
	This condition and the fact that $v_k^{-1} \in L_{\mathrm{loc}}^\infty (\mathbb{R^+})$ and $v_k^{-1} \notin L^1(+\infty)$ enable us to use Corollary 2.9 of \cite{MMR} to obtain that any solution $z$ of (\ref{ksta2}) is oscillatory. Taking now $R \leq R_1 < R_2$ two consecutive zeros of $z$ such that $z>0$ on $(R_1, R_2)$ we define the function $\varphi(x) := z(r(x))$, where $r(x)$ is the distance from $x$ to $o$ in $(M,g)$, and compute
	\begin{equation}
		\label{kkk4}
		\begin{split}
			\int_{B_{R_2} \setminus \overline{B_{R_1}}} - \varphi \widetilde{L}_k \varphi & = \int_{B_{R_2} \setminus \overline{B_{R_1}}} g \left( P_k(\nabla \varphi), \nabla \varphi \right) - \left( \overline{\kappa} \ \mathrm{trace}(P_k) - \mathrm{trace}(A^2 P_k) \right) \varphi^2 \\
			& \leq \int_{B_{R_2} \setminus \overline{B_{R_1}}} \mathrm{trace}(P_k) |\nabla \varphi|^2 - \left( \overline{\kappa} \ \mathrm{trace}(P_k) - \mathrm{trace}(A^2 P_k)
			\right) \varphi^2.
		\end{split}
	\end{equation}
	With the aid of the coarea formula, integrating by parts and using (\ref{ksta2}) we obtain 
	\begin{equation}
		\label{ksta5}
		\int_{B_{R_2} \setminus \overline{B_{R_1}}} - \varphi \widetilde{L}_k \varphi \leq - \int_{R_1}^{R_2} \left[ (v_k z')'(s) + A(s) v_k(s) z(s) \right] z(s) ds = 0.
	\end{equation}
	Therefore, from (\ref{ksta5}) and Proposition \ref{propo2} we deduce
	\begin{equation}
		\label{ksta6}
		\lambda_1^{\widetilde{L}_k} (M \setminus \overline{\Omega}) < \lambda_1^{\widetilde{L}_k} (B_{R_2} \setminus \overline{B_{R_1}}) \leq \frac{\int_{B_{R_2} \setminus \overline{B_{R_1}}} - \varphi \widetilde{L}_k \varphi}{\int_{B_{R_2}\setminus \overline{B_{R_1}}}\varphi^2} \leq 0,
	\end{equation}
	contradicting (\ref{ksta0}).
\end{demo}

\begin{teor}
	\label{kmax2}
	Let $\overline{M} = I \times_\rho F$ be a Robertson-Walker spacetime of constant sectional curvature and let $\psi: M \longrightarrow \overline{M}$ be a complete non-compact spacelike hypersurface with zero $(k+1)$-th mean curvature. Suppose that $H_k > 0$, $\sup_M H_k < +\infty$, $\mathrm{rank}(A) > k$ on $M$ and that, for some $o\in M$ and for some (hence any) $a\in\mathbb{R}$, one of the following conditions is satisfied:
	\begin{alignat*}{3}
		(i) & \qquad \lim_{r\to+\infty} \int_a^r \left( \int_{\partial B_r} H_k \right)^{-1} dt = +\infty && \qquad \text{and} \qquad \lim_{r\to+\infty} \int_{B_r} H_k = +\infty, \qquad \text{or} \\
		(ii) & \qquad \lim_{r\to+\infty} \int_a^r \left( \int_{\partial B_r} H_k \right)^{-1} dt < +\infty && \qquad \text{and} \qquad \lim_{r\to+\infty} \frac{1}{r}\log\left( \int_{\partial B_r} H_k \right) = 0,
	\end{alignat*}
	where $B_r$ is the geodesic ball of $(M,g)$ with radius $r$ centered at $o$. If $\psi(M)$ is contained in a slab $[a,b] \times F$, then $\inf_M |H_k\rho'(\tau)| = 0$. In particular, if $\inf_M H_k > 0$, then $\inf_M |\rho'(\tau)| = 0$ and therefore there exists $t_0$ in the closure of $\tau(M)\subseteq[a,b]$ such that $\rho'(t_0) = 0$.
\end{teor}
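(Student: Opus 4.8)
The plan is to reduce everything to the action of $L_k$ on the bounded, height-type function $\eta$, combined with a weak maximum principle for $L_k$. First I would record the structural facts. Since $H_{k+1}\equiv0$, $H_k>0$ and $\mathrm{rank}(A)>k$ on $M$, the Newton operator $P_k$ is positive definite, so $L_k$ is elliptic and, the ambient being of constant curvature, it is in divergence form $L_k u=\mathrm{div}(P_k(\nabla u))$; moreover $\mathrm{trace}(P_k)=c_k H_k$ by Lemma \ref{newtop}, which is bounded on $M$ because $\sup_M H_k<+\infty$. Specializing Lemma \ref{comp2} to $H_{k+1}=0$ gives the single identity I will use,
\[
	L_k\eta=-c_k\,\rho'(\tau)\,H_k\qquad\text{on }M,
\]
and, since $\psi(M)\subseteq[a,b]\times F$, the function $\eta=\overline{\eta}\circ\tau$ is bounded, with $\overline{\eta}(a)\le\eta\le\overline{\eta}(b)$.

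The key analytic input is that, under either growth hypothesis $(i)$ or $(ii)$, the weak maximum principle for the operator $L_k$ holds on the complete manifold $(M,g)$: for every $u\in C^2(M)$ bounded above there is a sequence $\{x_j\}\subseteq M$ with $u(x_j)\to\sup_M u$ and $L_k u(x_j)<1/j$. This is the form in which I would invoke the general theory of \cite{AMR}, the relevant weighted volume being $\int_{\partial B_r}\mathrm{trace}(P_k)=c_k\int_{\partial B_r}H_k$: condition $(i)$ is the parabolicity-type alternative in which $\bigl(\int_{\partial B_r}H_k\bigr)^{-1}\notin L^1(+\infty)$ and the weighted volume is infinite, whereas condition $(ii)$ is the complementary subexponential-growth alternative. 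The bound $\sup_M H_k<+\infty$ controls the top eigenvalue of $P_k$ and is what makes this machinery applicable.

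With these two ingredients the conclusion is a short argument by contradiction. Suppose $\inf_M|H_k\rho'(\tau)|=\delta>0$. Because $H_k>0$ and $H_k\rho'(\tau)$ never vanishes, the continuous function $\rho'(\tau)$ has a constant sign on the connected manifold $M$. If $\rho'(\tau)<0$ everywhere, then $L_k\eta=-c_kH_k\rho'(\tau)\ge c_k\delta>0$ while $\eta$ is bounded above; applying the weak maximum principle to $\eta$ produces points with $L_k\eta(x_j)<1/j$, which is impossible for $j$ large. If instead $\rho'(\tau)>0$ everywhere, the same reasoning applied to $-\eta$ (bounded above, with $L_k(-\eta)\ge c_k\delta>0$) gives the contradiction. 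Hence $\inf_M|H_k\rho'(\tau)|=0$.

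Finally, for the ``in particular'' assertion, assume $\inf_M H_k>0$ and choose $z_j\in M$ with $|H_k\rho'(\tau)|(z_j)\to0$; then $|\rho'(\tau)|(z_j)\le|H_k\rho'(\tau)|(z_j)/\inf_M H_k\to0$, so $\inf_M|\rho'(\tau)|=0$. Passing to a subsequence along which $\tau(z_j)\to t_0$ (possible since $\tau(M)\subseteq[a,b]$ is bounded) and using continuity of $\rho'$ gives $t_0\in\overline{\tau(M)}$ with $\rho'(t_0)=0$. The only genuine obstacle in the whole scheme is checking that the dichotomy $(i)/(ii)$ is precisely what guarantees the weak maximum principle for $L_k$ rather than for the Laplacian; once that is secured, the remaining sign analysis is elementary.
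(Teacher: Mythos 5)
Your setup is correct (ellipticity and divergence form of $L_k$, the identity $L_k\eta=-c_k\rho'(\tau)H_k$, boundedness of $\eta$), and your final sign analysis together with the ``in particular'' step would indeed close the argument \emph{if} the weak maximum principle for $L_k$ were available. But that is exactly where the proof breaks: you assert, without justification, that hypotheses $(i)$/$(ii)$ imply the weak maximum principle for $L_k$, and you yourself flag this as the unresolved obstacle. It is not a consequence of the cited theory. The known sufficient conditions for the weak maximum principle for a trace operator $\mathrm{trace}(P_k\circ\mathrm{Hess})$ with $\sup_M\mathrm{trace}(P_k)<+\infty$ (see \cite{AMR}) are growth conditions on the \emph{Riemannian} volume of geodesic balls, e.g.\ $\log\mathrm{Vol}(B_r)=O(r^2)$; conditions $(i)$ and $(ii)$ only control the weighted quantities $\int_{\partial B_r}H_k$ and $\int_{B_r}H_k$, and since $H_k$ is positive and bounded above but (in the main statement) not bounded away from zero, these say nothing about $\mathrm{Vol}(B_r)$. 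Note also that your route never uses the second half of condition $(i)$, namely $\int_{B_r}H_k\to+\infty$ --- a sign that the hypotheses were not designed for the principle you invoke.

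What $(i)$/$(ii)$ actually deliver, via the oscillation theory of \cite{BMR} (Theorem 6.6 under $(i)$, Proposition 6.9 under $(ii)$), is that for every $\varepsilon>0$ and $R>0$ the radialized problem $(wz')'+\varepsilon wz=0$ with $w(r)=\int_{\partial B_r}\mathrm{trace}(P_k)$ is oscillatory; transplanting its solutions as test functions and using the monotonicity of eigenvalues (Proposition \ref{propo2}) together with $\mathrm{Vol}(\partial B_r)\geq w(r)/(c_k\Lambda)$, $\Lambda=\sup_M H_k$, gives $\lambda_1^{L_k}(M\setminus\overline{B_R})<\varepsilon\Lambda$, whence $\lambda_1^{L_k}(M)=0$. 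The bridge from this spectral fact to the pointwise conclusion is the generalized Barta inequality (Proposition \ref{barta}) applied to the two positive functions $\eta+\varepsilon$ and $\overline{\eta}(b)+\varepsilon-\eta$, both bounded between $\varepsilon$ and $\overline{\eta}(b)+\varepsilon$: it yields $\inf_M c_kH_k\rho'(\tau)/(\eta+\varepsilon)\leq0$ and $\inf_M\bigl(-c_kH_k\rho'(\tau)\bigr)/\bigl(\overline{\eta}(b)+\varepsilon-\eta\bigr)\leq0$, which together force $\inf_M|H_k\rho'(\tau)|=0$ exactly as in your sign analysis. So your instinct --- produce points where $L_k\eta$ is almost nonpositive and points where it is almost nonnegative --- is the right one, but the tool that the stated hypotheses support is $\lambda_1^{L_k}(M)=0$ combined with Barta, not the weak maximum principle.
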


\begin{demo}
	First observe that $H_{k+1}\equiv0$, $H_k>0$ and $\mathrm{rank}(A)>k$ guarantee that $P_k$ is positive definite and therefore $L_k$ is elliptic (see Proposition 6.27 of \cite{BMR}). Set $w(r) = \int_{\partial B_r} \mathrm{trace}(P_k)$ for every $r>0$ and $\Lambda = \sup_M H_k$.
	
	Let $R>0$ be given and suppose that condition (i) is satisfied. Then
	$$
	\lim_{r\to+\infty}\int_a^r\frac{dt}{w(t)} = \lim_{r\to+\infty} \int_a^r w(t)dt = +\infty,
	$$
	since $\mathrm{trace}(P_k) = c_k H_k$. Let $\varepsilon>0$ be given. By Theorem 6.6 of \cite{BMR}, the solution $z \in \mathrm{Lip}_{\mathrm{loc}}([R,+\infty))$ of the Cauchy problem
	\begin{equation}
		\label{CPkmax}
		\begin{cases}
			(w(r)z')' + \varepsilon w(r)z = 0 & \text{on } [R,+\infty) \\
			z(R) = 1, \quad (wz')(R^+) = 0
		\end{cases}
	\end{equation}
	is oscillatory. Let $R<R_1<R_2$ be two consecutive zeros of $z$ such that $z>0$ on $(R_1,R_2)$ and let $\varphi(x)=z(r(x))$, where $r(x)$ is the distance from $x$ to $o$ in $(M,g)$. By the coarea formula, the inequality $\mathrm{Vol}(\partial B_r) \geq w(r)/\Lambda$ and the fact that $z$ solves (\ref{CPkmax}), we have
	\begin{align*}
		\lambda_1^{L_k}(B_{R_2}\setminus\overline{B_{R_1}}) & \leq \frac{\int_{B_{R_2}\setminus\overline{B_{R_1}}}g(P_k(\nabla\varphi),\nabla\varphi)}{\int_{B_{R_2}\setminus\overline{B_{R_1}}}\varphi^2} = \frac{\int_{R_1}^{R_2}w(r)z'(r)^2}{\int_{R_1}^{R_2}\mathrm{Vol}(\partial B_r)z(r)^2} \\
		& \leq \frac{\int_{R_1}^{R_2}\varepsilon w(r)z(r)^2}{\int_{R_1}^{R_2}\frac{w(r)}{\Lambda}z(r)^2} = \varepsilon\Lambda,
	\end{align*}
	and by Proposition \ref{propo2} we get
	\begin{equation}
		\label{klambdaepsilon}
		0 \leq \lambda_1^{L_k}(M\setminus\overline{B_R}) < \varepsilon\Lambda.
	\end{equation}
	
	Suppose now that (ii) is satisfied. Then $\lim_{r\to+\infty}\int_a^r\frac{dt}{w(t)}=0$ and for every $\varepsilon>0$ there exists $C_{\varepsilon}>0$ such that $w(r)\leq C_{\varepsilon}e^{\varepsilon r}$ for every $r>0$. Let $0<\varepsilon<4$ be given. Then, for every $r>0$,
	\begin{align*}
		\varepsilon & > \frac{\varepsilon^2}{4} = \left( \lim_{s\to+\infty} \left(-\frac{1}{2(s-r)}\log\int_s^{+\infty}\frac{dt}{C_{\varepsilon}e^{\varepsilon t}}\right) \right)^2 \\
		& \geq \left( \inf_{s>r} \left(-\frac{1}{2(s-r)}\log\int_s^{+\infty}\frac{dt}{C_{\varepsilon}e^{\varepsilon t}}\right) \right)^2
	\end{align*}
	and by Proposition 6.9 of \cite{BMR} this condition is sufficient to deduce that for every $R>0$ the solution $z$ of the Cauchy problem (\ref{CPkmax}) is oscillatory, so inequality (\ref{klambdaepsilon}) follows again.
	
	Letting $\varepsilon\to 0^+$ and $R\to 0^+$, we deduce that $\lambda_1^{L_k}(M)=0$. We have $\psi(M)\subseteq[a,b]\times F$, with $a,b\in I$. Choose $t_0=a$ and let $\overline{\eta}$, $\eta$ be as in (\ref{defeta}). Then $0=\overline{\eta}(a)\leq\eta\leq\overline{\eta}(b)<+\infty$ on $M$. Fix $\varepsilon>0$. By Proposition \ref{barta} applied to the positive functions $\eta+\varepsilon$, $\overline{\eta}(b)+\varepsilon-\eta$ and by (\ref{Lketa}) we have
	$$
		0 \geq \inf_M \left( - \frac{L_k\eta}{\eta+\varepsilon} \right) = \inf_M \frac{c_k H_k \rho'(\tau)}{\eta + \varepsilon}, \quad 0 \geq \inf_M \left( \frac{L_k\eta}{\overline{\eta}(b)+\varepsilon-\eta} \right) = \inf_M \frac{- c_k H_k \rho'(\tau)}{\overline{\eta}(b) + \varepsilon - \eta}.
	$$
	Since $\eta+\varepsilon>\varepsilon$, $\overline{\eta}(b)+\varepsilon-\eta>\varepsilon$ and $c_k>0$, we deduce $\inf_M |H_k \rho'(\tau)| = 0$.
\end{demo}

\begin{rem}
	Under the hypothesis $C^{-1} < H_k < C$ we have $c_k C^{-1}\mathrm{Vol}(\partial B_r) \leq w(r) \leq c_k C\mathrm{Vol}(\partial B_r)$ for every $r>0$ and therefore conditions (i) and (ii) in the statement of Theorem \ref{intro:kmax2} are equivalent to the corresponding conditions in the statement of Theorem \ref{kmax2}.
\end{rem}

\section*{Acknowledgements} 

The second author is supported by Spanish MINECO and ERDF project MTM2016-78807-C2-1-P.


\begin{thebibliography}{99}

\bibitem{AA} A.L. Albujer, L.J. Al\'ias, Calabi-Bernstein results for maximal surfaces in Lorentzian product spaces, \emph{J. Geom. Phys.}, \textbf{59} (2009), 620--631.

\bibitem{ABC} L.J. Al\'ias, A. Brasil and A.G. Colares, Integral formulae for spacelike hypersurfaces in conformally stationary spacetimes and applications, \emph{Proc. Edinb. Math. Soc. (2)}, \textbf{46} (2003), 465--488.

\bibitem{AC} L.J. Al\'ias and A.G. Colares, Uniqueness of spacelike hypersurfaces with constant higher order mean curvature in generalized Robertson-Walker spacetimes, \emph{Math. Proc. Cambridge}, \textbf{143} (2007), 703--729.

\bibitem{AMR} L.J. Al\'ias, P. Mastrolia and M. Rigoli, \emph{Maximum principles and geometric applications}, Springer (2016).

\bibitem{ARS2} L.J. Al\'ias, A. Romero and M. S\'anchez, Spacelike hypersurfaces of constant mean curvature and Calabi-Bernstein type problems, \emph{T\^ohoku Math. J.}, \textbf{49} (1997), 337--345.

\bibitem{ARS} L.J. Al\'ias, A. Romero and M. S\'anchez, Uniqueness of complete spacelike hypersurfaces of constant mean curvature in Generalized Robertson-Walker spacetimes, \emph{Gen. Relativ. Gravit.}, \textbf{27} (1995), 71--84.

\bibitem{ARSc} L.J. Al\'ias, M. Rigoli and S. Scoleri, Weak maximum principles and geometric estimates for spacelike hypersurfaces in generalized Robertson-Walker spacetimes, \emph{Nonlinear Anal.} \textbf{129} (2015), 119--142.

\bibitem{AMM} J.M. Arms, J.E. Marsden and V. Moncrief, The structure of the space of solutions of Einstein's equations. II. Several Killing fields and the Einstein-Yang-Mills equations, \emph{Ann. Phys.}, \textbf{144} (1982), 81--106. 

\bibitem{Bar} R. Bartnik, Existence of maximal surfaces in asymptotically flat spacetimes, \emph{Commun. Math. Phys.}, \textbf{94} (1984), 155--175.

\bibitem{BMR1} B. Bianchini, L. Mari and M. Rigoli, Spectral radius, index estimates for Schr\"odinger operators and geometric applications, \emph{J. Funct. Anal.}, \textbf{256} (2009), 1769--1820.

\bibitem{BMR} B. Bianchini, L. Mari and M. Rigoli, On some aspects of oscillation theory and geometry, \emph{Mem. Am. Math. Soc.}, \textbf{225} (2013), vi+195.

\bibitem{brasil} A. Brasil and A.G. Colares, On constant mean curvature spacelike hypersurfaces in Lorentz manifolds, \emph{Mat. Contemp.}, \textbf{17} (1999) 99--136.

\bibitem{BC} A. Brasil and A.G. Colares, Stability of spacelike hypersurfaces with constant $r$-mean curvature in de Sitter space, \emph{Proceedings of the XII Fall Workshop on Geometry and Physics}, \emph{Publ. R. Soc. Mat. Esp.}, \textbf{7} (2004), 139--145. 

\bibitem{BF} D. Brill and F. Flaherty, Isolated maximal surfaces in spacetime, \emph{Commun. Math. Phys.} \textbf{50} (1984), 157--165.

\bibitem{Ca} E. Calabi, Examples of Bernstein problems for some nonlinear equations, \emph{P. Symp. Pure Math.}, \textbf{15} (1970), 223--230.

\bibitem{CC} F. Camargo, A. Caminha, M. da Silva and H. de Lima, On the r-stability of spacelike hypersurfaces, \emph{J. Geom. Phys.}, \textbf{60} (2010), 1402-1410.

\bibitem{CY} S.Y. Cheng and S.T. Yau, Maximal spacelike hypersurfaces in the Lorentz-Minkowski spaces, \emph{Ann. of Math.},  \textbf{104} (1976), 407--419.

\bibitem{Ch} Y. Choquet-Bruhat, Quelques propri\'et\'es des sousvari\'et\'es maximales d'une vari\'et\'e lorentzienne, \emph{Cr. Acad. Sci. A Math. (Paris)  Serie A}, \textbf{281} (1975), 577--580.

\bibitem{CG} Y. Choquet-Bruhat and R. Geroch, Global Aspects of the Cauchy Problem in General Relativity, \emph{Commun. Math. Phys.}, \textbf{14} (1969), 329--335.  

\bibitem{F} T. Frankel, Applications of Duschek's formula to cosmology and minimal surfaces, \emph{B. Am. Math. Soc.}, \textbf{81} (1975), 579--583.

\bibitem{FRS} D. de la Fuente, R.M. Rubio and J.J. Salamanca, Stability of maximal hypersurfaces in spacetimes: new general conditions and applications to relevant spacetimes, \emph{Gen. Relativ. Gravit.}, \textbf{49} (2017), 129--143.

\bibitem{FC} D. Fischer-Colbrie, On complete minimal surfaces with finite Morse index in three-manifolds, \emph{Invent. Math.}, \textbf{82} (1985), 121--132.

\bibitem{FS} D. Fischer-Colbrie and R. Schoen, The structure of complete stable minimal surfaces in 3-manifolds of non-negative scalar curvature, \emph{Commun. Pur. Appl. Math.}, \textbf{33} (1980), 199--211.

\bibitem{GT} D. Gilbarg, N. Trudinger, \emph{Elliptic partial differential equations of second order}, Springer (2001).

\bibitem{HL} J. Hounie, M.L. Leite, The maximum principle for hypersurfaces with vanishing curvature functions, \emph{J. Differential Geom.}, \textbf{41} (1995), 247--258.

\bibitem{I} T. Ishihara, Maximal spacelike submanifolds of a pseudoriemannian space of constant curvature, \emph{Michigan Math. J.}, \textbf{35} (1988), 345--352.

\bibitem{JKG} J.L. Jaramillo, J.A.V. Kroon and E. Gourgoulhon, From geometry to numerics: interdisciplinary aspects in mathematical and numerical relativity, \emph{Classical Quant. Grav.}, \textbf{25} (2008), 093001.

\bibitem{Li} A. Lichnerowicz, L'integration des \'equations de la gravitation relativiste et le probl\`eme des n corps, \emph{J. Math. Pure Appl.}, \textbf{23} (1944), 37--63.

\bibitem{MMR} L. Mari, P. Mastrolia and M. Rigoli, A note on Killing vector fields and CMC hypersurfaces, \emph{J. Math. Anal. Appl.}, \textbf{431} (2015), 919--934.

\bibitem{M-T} J.E. Marsden and F.J. Tipler, Maximal hypersurfaces and foliations of constant mean curvature in General Relativity, \emph{Phys. Rep.}, \textbf{66} (1980), 109--139.

\bibitem{MRS} P. Mastrolia, M. Rigoli and A.G. Setti, \emph{Yamabe-type equations on complete, noncompact manifolds}, Springer, (2012).

\bibitem{N} S. Nishikawa, On maximal spacelike hypersurfaces in a Lorentzian manifold, \emph{Nagoya Math. J.}, \textbf{95} (1984), 117--124.

\bibitem{O'N} B. O'Neill, \emph{Semi-Riemannian Geometry with applications to Relativity}, Academic Press, (1983).

\bibitem{Os} R. Osserman, \emph{A survey of minimal surfaces}, Dover Publications, (1986). 

\bibitem{PRR1} J.A.S. Pelegr\'in, A. Romero and R.M. Rubio, On maximal hypersurfaces in Lorentz manifolds admitting a parallel lightlike vector field, \emph{Classical Quant. Grav.}, \textbf{33} (2016), 055003(1--8).

\bibitem{PRR3} J.A.S. Pelegr\'in, A. Romero and R.M. Rubio, On uniqueness of the foliation by comoving observers restspaces of a Generalized Robertson-Walker spacetime, \emph{Gen. Relativ. Gravit.}, \textbf{49} (2017), Art. 16, 14pp.

\bibitem{PRR2} J.A.S. Pelegr\'in, A. Romero and R.M. Rubio, Uniqueness of complete maximal hypersurfaces in spatially open $(n+1)$-dimensional Robertson-Walker spacetimes with flat fiber, \emph{Gen. Relativ. Gravit.}, \textbf{48} (2016), 1--14.

\bibitem{PRS} S. Pigola, M. Rigoli and A.G. Setti, Vanishing theorems on Riemannian manifolds, and geometric applications, \emph{J. Funct. Anal.}, \textbf{229} (2005), 424--461.

\bibitem{PRS2} S. Pigola, M. Rigoli and A.G. Setti, \emph{Vanishing and finiteness results in geometric analysis}, Birkh\"auser (2008).

\bibitem{RS} M. Rigoli and A.G. Setti, Liouville type theorems for $\phi$-subharmonic functions, \emph{Rev. Mat. Iberoam}, \textbf{17} (2001), 471--520.

\bibitem{SY} R. Schoen and S.T. Yau, On the proof of the positive mass conjecture in General Relativity, \emph{Comm. Math. Phys.} \textbf{65} (1979), 45--76.

\bibitem{JS} J. Simons, Minimal varieties in riemannian manifolds, \emph{Ann. Math.}, \textbf{88} (1968), 62--105.

\bibitem{T} P. Tolksdorf, Regularity of a more general class of quasilinear elliptic equations, \emph{J. Differential Equations}, \textbf{51} (1984), 126--150.

\bibitem{W} J.A. Wolf, \emph{Spaces of constant curvature}, McGraw-Hill (1967).

\end{thebibliography}
\end{document}